\documentclass[a4paper,11pt]{article}
\usepackage[latin1]{inputenc}
\usepackage[english]{babel}
\usepackage{amsmath}
\usepackage{amsfonts}
\usepackage{amssymb}
\usepackage{epsfig}
\usepackage{amsopn}
\usepackage{amsthm}
\usepackage{color}
\usepackage{graphicx}
\usepackage{subfigure}
\usepackage{enumerate}
%\usepackage{showkeys}
%\setlength{\oddsidemargin}{0.25in} \addtolength{\hoffset}{-1cm}
%\addtolength{\textwidth}{4.5cm} \addtolength{\voffset}{-1cm}
%\addtolength{\textheight}{3cm}
\setlength{\oddsidemargin}{0.25in} \addtolength{\hoffset}{0cm}
\addtolength{\textwidth}{2.5cm} \addtolength{\voffset}{-1cm}
\addtolength{\textheight}{1cm}
% Several other commands that are useful for my purposes
\newtheorem{theorem}{Theorem}[section]

\newtheorem{lemma}[theorem]{Lemma}
\newtheorem{proposition}[theorem]{Proposition}
\newtheorem{definition}[theorem]{Definition}

\newtheorem*{theorem*}{Theorem}
\newtheorem*{lemma*}{Lemma}
\newtheorem*{remark*}{Remark}
\newtheorem*{definition*}{Definition}
\newtheorem*{proposition*}{Proposition}
\newtheorem*{corollary*}{Corollary}
\numberwithin{equation}{section}
%\numberwithin{theorem}{section}
%\numberwithin{proposition}{section} \numberwithin{lemma}{section}
%\numberwithin{definition}{section} \numberwithin{corollary}{section}
%\numberwithin{remark}{section}
%

\newcommand{\real}{\mathbb{R}}

 %fn space R^Om*Om
 %symmetric reps
  %alternating reps
 %direct sum
 %composition sign

  %complex numbers
  %a vector space or
  %function space

  %measure category
         % bar-under (but see \un below)
         % cedilla
          % dot-under
          % Hungarian umlaut
         % slashed (suppressed) l (Polish)
         % " L
         % slashed o (Scandinavian)
         % " O
         % tie-after (semicircle connecting two letters)
          % breve
% Abbreviation for the Scalar Product

% Abbreviations for Greek letters

%\def\c{\chi}

        % Also, \varepsilon
            %   \varphi

         % Also, \varpi
          %   \vartheta
            %   \varrho
          %   \varsigma

% Calligraphic letters

 %per gli spazi L^p
 %per il supporto

 %per l'esponenziale
 %per la distanza
\def\qed{\,\unskip\kern 6pt \penalty 500
\raise -2pt\hbox{\vrule \vbox to8pt{\hrule width 6pt
\vfill\hrule}\vrule}\par}
\definecolor{darkblue}{rgb}{0.05, .05, .65}
\definecolor{darkgreen}{rgb}{0.1, .65, .1}
\definecolor{darkred}{rgb}{0.8,0,0}
\newcommand{\beqn}{\begin{equation}}
\newcommand{\eeqn}{\end{equation}}
\newcommand{\bear}{\begin{eqnarray}}
\newcommand{\eear}{\end{eqnarray}}
\newcommand{\bean}{\begin{eqnarray*}}
\newcommand{\eean}{\end{eqnarray*}}
%

%%%%%%%%%%%%%%%%%%%%%%%%%%%%%%%%%%%%%%%%%%%%%%%%%
%%%%%%%%%%%%%%%%%%%%%%%%%%%%%%%%%%%%%%%%%%%%%%%%%

%%%%%%%%%%%%%%%%%%%%%%%%%%%%%%%%%%%%%%%%%%%%%%%%%
\begin{document}
%%%%%%%%%%%%%%%%%%%%%%%%%%%%%%%%%%%%%%%%%%%%%%%%%

%%%%%%%%%%%%%%%%%%%%%%%%%%%%%%%%%%%%%%%%%%%%%%%%%
\title{\huge \bf Self-similar blow-up patterns for a reaction-diffusion equation with weighted reaction in general dimension}

\author{
\Large Razvan Gabriel Iagar\,\footnote{Departamento de Matem\'{a}tica
Aplicada, Ciencia e Ingenieria de los Materiales y Tecnologia
Electr\'onica, Universidad Rey Juan Carlos, M\'{o}stoles,
28933, Madrid, Spain, \textit{e-mail:} razvan.iagar@urjc.es},\\
[4pt] \Large Ana I. Mu\~{n}oz\,\footnote{Departamento de Matem\'{a}tica
Aplicada, Ciencia e Ingenieria de los Materiales y Tecnologia
Electr\'onica, Universidad Rey Juan Carlos, M\'{o}stoles,
28933, Madrid, Spain, \textit{e-mail:} anaisabel.munoz@urjc.es},
\\[4pt] \Large Ariel S\'{a}nchez,\footnote{Departamento de Matem\'{a}tica
Aplicada, Ciencia e Ingenieria de los Materiales y Tecnologia
Electr\'onica, Universidad Rey Juan Carlos, M\'{o}stoles,
28933, Madrid, Spain, \textit{e-mail:} ariel.sanchez@urjc.es}\\
[4pt] }
\date{}
\maketitle

\begin{abstract}
We classify the finite time blow-up profiles for the following reaction-diffusion equation with unbounded weight:
$$
\partial_tu=\Delta u^m+|x|^{\sigma}u^p,
$$
posed in any space dimension $x\in\real^N$, $t\geq0$ and with exponents $m>1$, $p\in(0,1)$ and $\sigma>2(1-p)/(m-1)$. We prove that blow-up profiles in backward self-similar form exist for the indicated range of parameters, showing thus that the unbounded weight has a strong influence on the dynamics of the equation, merging with the nonlinear reaction in order to produce finite time blow-up. We also prove that all the blow-up profiles are \emph{compactly supported} and might present two different types of interface behavior and three different possible \emph{good behaviors} near the origin, with direct influence on the blow-up behavior of the solutions. We classify all these profiles with respect to these different local behaviors depending on the magnitude of $\sigma$. This paper generalizes in dimension $N>1$ previous results by the authors in dimension $N=1$ and also includes some finer classification of the profiles for $\sigma$ large that is new even in dimension $N=1$.
\end{abstract}

\

\noindent {\bf Mathematics Subject Classification 2020:} 35A24, 35B44, 35C06,
35K10, 35K65.

\smallskip

\noindent {\bf Keywords and phrases:} reaction-diffusion equations,
weighted reaction, blow-up, self-similar solutions, phase
space analysis, strong reaction

\section{Introduction}

The aim of this paper is to classify the blow-up profiles in self-similar form for the following reaction-diffusion equation
\begin{equation}\label{eq1}
\partial_tu=\Delta u^m+|x|^{\sigma}u^p
\end{equation}
posed for $(x,t)\in\real^N\times(0,\infty)$, in the range of exponents
\begin{equation}\label{range.exp}
m>1, \ \quad 0<p<1, \quad \sigma>\frac{2(1-p)}{m-1},
\end{equation}
the most interesting feature of this equation being the presence of an unbounded weight on the reaction term, which, as we shall see, has a very strong influence on the dynamics of it. In particular, the lower bound we impose $\sigma>2(1-p)/(m-1)$ is the optimal one leading to the phenomenon of finite time blow-up, which does not occur either for the non-weighted (homogeneous) equation with $\sigma=0$ studied in \cite{dPV90, dPV91, dPV92} or even for the limiting case $\sigma=2(1-p)/(m-1)$, a case studied recently in \cite{IS21Eternal} where it is shown that eternal solutions in self-similar exponential form exist. Here and throughout the paper, by finite time blow-up we understand the case when a solution becomes unbounded at some time $T\in(0,\infty)$. More rigorously, we say that a solution $u$ to \eqref{eq1} blows up in finite time if there exists $T\in(0,\infty)$ such that $u(T)\not\in L^{\infty}(\real)$, but $u(t)\in L^{\infty}(\real)$ for any $t\in(0,T)$. The smallest time $T<\infty$ satisfying this property is called the blow-up time of $u$. We will also use frequently the following notation: $u(t)$ designs the map $x\mapsto u(x,t)$ for a fixed time $t\in[0,T]$, and $u_t$ designs the derivative with respect to the time variable.

Studying the effects of the \emph{competition} between the diffusion term and the presence of an \emph{unbounded} weight on the reaction term for reaction-diffusion equations has been an interesting research subject in the last three decades. Chronologically, the first results deal with the semilinear case (that is, letting $m=1$ in \eqref{eq1}) and with reaction exponents $p>1$, and a number of well-known mathematicians were involved, such as Bandle, Levine, Baras, Kersner, Pinsky, DiBenedetto, Andreucci et. al., see for example works such as \cite{BK87, BL89, Pi97, Pi98} and references therein. In these works, the main question was for which initial conditions $u_0$ finite time blow-up of the solution $u(t)$ occurs. A different approach is given in \cite{AdB91} for any $m\geq1$, where a qualitative analysis of solutions and their regularity, including Harnack inequalities and many other functional properties of them, is performed for $p>1$ and with weights of the form $(1+|x|)^{\sigma}$ for any signs of $\sigma$, that is, both bounded and unbounded. In the latter work, the case $p\in(0,1)$ is left aside due to its functional difficulty. Let us also stress here that the qualitative analysis of solutions when non-constant and unbounded coefficients appear is very difficult, since a number of standard results with respect to regularity and comparison do not hold true for such weights.

More recently, Suzuki \cite{Su02} established rather sharp conditions on the exponent $p>m$ and the behavior of the initial condition $u_0(x)$ as $|x|\to\infty$ for finite time blow-up to occur, respectively for global solutions to exist, introducing a Fujita-type exponent for the weighted quasilinear equation and the second critical exponent corresponding to the behavior of the initial condition. In the same range $m>1$ and $p>m$, Andreucci and Tedeev established in \cite{AT05} a blow-up rate of general solutions, provided $\sigma$ is limited by some upper bound which, in our opinion, is a technical limitation. Later on, a series of paper by Guo, Shimojo, Souplet \cite{GLS10, GS11, GLS13, GS18} gave an answer in the semilinear case $m=1$ to the very interesting question of whether the point $x=0$ (or more generally, any zero of a weight $V(x)$ in front of the reaction term) can be a blow-up point. There are also available results on the dynamics and blow-up properties for reaction-diffusion equations with localized weights, such as \cite{FdPV06} in dimension $N=1$ and \cite{KWZ11, Liang12, FdP18} for $N\geq2$, in which the weight $|x|^{\sigma}$ is replaced by a localized, compactly supported and bounded weight $a(x)$. It is there shown that the solutions may sometimes be global and present grow-up instead of blow-up, depending on the support of $a(x)$. These results enlighten the significance of the unboundedness of the weight for the finite time blow-up to occur for Eq. \eqref{eq1}.

In view of these precedents, we started a long-term project of understanding the effects of the unbounded weight $|x|^{\sigma}$ on the dynamics of Eq. \eqref{eq1}. Since it is already well established that, in nonlinear diffusion and reaction-diffusion equations, the \emph{self-similar solutions} represent both examples of solutions to the equation and patterns for the large-time behavior of the general solutions (see the excellent monograph \cite{VPME} for the utmost importance of the self-similarity in the study of such equations), our first goal is to classify the self-similar profiles in terms of their behavior (that will be made more precise below). While a detailed analysis of self-similarity for the case $\sigma=0$ is available in \cite[Chapter 4]{S4}, previous works on different ranges of the exponents $m$, $p$ and $\sigma>0$ led to some completely novel and, in many occasions, unexpected forms and behaviors of the profiles, see for example \cite{IS19} (for $p=1$ and $\sigma>0$), \cite{IS21a} (for $p\in(1,m)$ and $\sigma>0$) or \cite{IS20a, IS21c} (for $p=m$ and dimensions $N=1$, respectively $N\geq2$), all them for the range $m>1$. A separate mention has to be given to the works \cite{IS20b, IS21b}, dealing with our range of exponents \eqref{range.exp} but for dimension $N=1$, where we classify the self-similar blow-up profiles and obtain that there are two different types of compactly supported profiles, depending on the interface behavior and also on the sign of the number $m+p-2$, a fact that is in line with the more qualitative study performed in the series of papers by DePablo and V\'azquez \cite{dPV90, dPV91, dPV92} for the homogeneous equation, that is, Eq. \eqref{eq1} with $\sigma=0$. We mention here also our recent work \cite{IS21Eternal} in which the limits of the blow-up behavior are explored: it is proved there that for $\sigma=2(1-p)/(m-1)$, \emph{eternal solutions} with exponentially fast grow-up in time exist and finite time blow-up is not expected to occur for this limiting exponent. We thus deduce that the range of $\sigma$ we consider is \emph{optimal} with respect to finite time blow-up, and of course this illustrates the strong influence of the magnitude of $\sigma$ on the dynamics of Eq. \eqref{eq1}.

We are now in a position to present our main results.

\medskip

\noindent \textbf{Main results.} Since, following the previous discussion, we expect that finite time blow-up will hold true in the range of exponents \eqref{range.exp}, we look for \emph{backward self-similar solutions} in the general form
\begin{equation}\label{BSS}
u(x,t)=(T-t)^{-\alpha}f(\xi), \qquad \xi=|x|(T-t)^{\beta},
\end{equation}
where $T\in(0,\infty)$ is the blow-up time. Plugging the ansatz \eqref{BSS} into Eq. \eqref{eq1}, it is easy to get that
\begin{equation}\label{selfsim.exp}
\alpha=\frac{\sigma+2}{\sigma(m-1)+2(p-1)}, \qquad \beta=\frac{m-p}{\sigma(m-1)+2(p-1)},
\end{equation}
while the self-similar profile $f(\xi)$ is obtained as a solution to the following ordinary differential equation
\begin{equation}\label{SSODE}
(f^m)''(\xi)+\frac{N-1}{\xi}(f^m)'(\xi)-\alpha f(\xi)+\beta\xi f'(\xi)+\xi^{\sigma}f(\xi)^p=0, \ \ \xi\in[0,\infty).
\end{equation}
Since the self-similarity exponents $\alpha$ and $\beta$ are fixed, we only need to determine the profiles of the solutions, thus our classification reduces to a study of Eq. \eqref{SSODE}. We will employ a \emph{phase-space analysis} technique in order to achieve this goal, but before stating the rigorous results and entering the details we define below what we understand by a \emph{good profile}.
\begin{definition}\label{def1}
We say that a solution $f$ to the differential equation \eqref{SSODE} is a \textbf{good profile} if it fulfills one of the following three types of behavior at $\xi=0$: either
\begin{equation}\label{beh.Q1}
f(0)>0, \qquad f(\xi)\sim\left[C+\frac{\alpha(m-1)}{2mN}\xi^2\right]^{1/(m-1)}, \ C>0, \qquad {\rm as} \ \xi\to0.
\end{equation}
or
\begin{equation}\label{beh.P2}
f(0)=0, \qquad f(\xi)\sim\left[\frac{m-1}{2m(mN-N+2)}\right]^{1/(m-1)}\xi^{2/(m-1)}, \qquad {\rm as} \ \xi\to0,
\end{equation}
or as a third possibility
\begin{equation}\label{beh.P0}
f(0)=0, \qquad f(\xi)\sim K\xi^{(\sigma+2)/(m-p)}, \ \ K>0, \qquad {\rm as} \ \xi\to0.
\end{equation}

\noindent We say that a profile $f$ has an \textbf{interface} at some point $\xi_0\in(0,\infty)$ if
$$
f(\xi_0)=0, \qquad (f^m)'(\xi_0)=0, \qquad f>0 \ {\rm on} \ (\xi_0-\delta,\xi_0), \ {\rm for \ some \ } \delta>0.
$$
\end{definition}
The local behaviors given by \eqref{beh.Q1}, \eqref{beh.P2} and \eqref{beh.P0} as $\xi\to0$ are the only possible ones for self-similar profiles, as it will follow from the forthcoming analysis. The condition $(f^m)'(\xi_0)=0$ is the standard flow condition ensuring that the self-similar solution \eqref{BSS} is a weak solution in a neighborhood of the interface point (see \cite[Section 9.8]{VPME}). We are interested in the classification of \emph{good profiles with interface} and, similarly as in dimension $N=1$ \cite{IS20b}, a profile $f(\xi)$ can form two different types of interface at a positive point $\xi_0\in(0,\infty)$, these are

$\bullet$ $f(\xi)\sim(\xi_0-\xi)^{1/(m-1)}$, as $\xi\to\xi_0$, is the standard interface behavior inherited from the diffusion term, as we recall that the Barenblatt solutions to the standard porous medium equation have this type of contact at the interface. This is called \emph{interface of} \textbf{Type I} throughout the text.

$\bullet$ $f(\xi)\sim(\xi_0-\xi)^{1/(1-p)}$, as $\xi\to\xi_0$ is a different interface behavior that is characteristic to reaction exponents $p<1$ and will be analyzed in the paper. We will call it \emph{interface of} \textbf{Type II} throughout the text.

Notice that, for $m+p=2$, the two local interface behaviors coincide and reduce to a single one. Moreover, also for the case $m+p=2$, where the condition on $\sigma$ in \eqref{range.exp} reduces to the simpler lower bound $\sigma>2$, following \cite{IS21b} we let
\begin{equation}\label{ximax}
\xi_{max}:=\left(\frac{\beta^2}{4m}\right)^{1/(\sigma-2)}=\left(\frac{1}{m(\sigma-2)^2}\right)^{1/(\sigma-2)}\in(0,\infty),
\end{equation}
and find that the localization of possible interface points of profiles only to points $\xi_0\in(0,\xi_{max}]$ remains true also in dimension $N\geq2$. We can thus state our main existence result for blow-up profiles.
\begin{theorem}\label{th.exist}
Let $m$, $p$ and $\sigma$ be as in \eqref{range.exp}. Then
\begin{itemize}
  \item if $m+p>2$, then there exists at least one good profile (in the sense of Definition \ref{def1}) with interface of Type I and one good profile with interface of Type II to Eq. \eqref{SSODE}.
  \item if $m+p=2$, for any $\xi_0\in(0,\xi_{max}]$, there exists at least a good profile (in the sense of Definition \ref{def1}) with interface to Eq. \eqref{SSODE}  having its interface point at $\xi=\xi_0$. For any $\xi_0>\xi_{max}$ there is no good profile with interface at $\xi_0$.
  \item if $m+p<2$ there are no good profiles with interface to Eq. \eqref{SSODE}.
\end{itemize}
\end{theorem}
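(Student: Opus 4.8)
The plan is to recast the second-order ODE \eqref{SSODE} as an autonomous dynamical system in a three-dimensional phase space, where the three prescribed local behaviors \eqref{beh.Q1}, \eqref{beh.P2}, \eqref{beh.P0} and the two interface types correspond to specific critical points (or critical points at infinity), and then to establish the existence of connecting orbits by a combination of local stable/unstable manifold analysis near those critical points and a topological shooting argument. Concretely, I would set $X = \xi(f^m)'/(f^m)$-type and $Y = \xi^{\sigma+1}f^{p-m}$-type variables together with a logarithmic independent variable $\eta = \ln\xi$, mimicking exactly the one-dimensional reduction of \cite{IS20b, IS21b} but carrying along the extra term $\frac{N-1}{\xi}(f^m)'$; in the new variables this term contributes a constant to one of the equations, so the system remains autonomous. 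The critical point $P_0$ governing behavior \eqref{beh.P0} and the critical point $Q_1$ governing \eqref{beh.Q1} sit at $\xi = 0$, the interface points sit at $\xi = \xi_0 > 0$, and the point(s) at infinity encode the two interface exponents $1/(m-1)$ (Type I) and $1/(1-p)$ (Type II).

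The three cases of the theorem are then separated by the sign of $m+p-2$, which controls the relative position of the two "interface" local behaviors and, via the value $\xi_{max}$ in \eqref{ximax} when $m+p=2$, the admissible location of interface points. For $m+p>2$, I would show that the unstable manifold emanating from $P_0$ (a one-parameter family of good profiles with the behavior \eqref{beh.P0}) must, as $\xi$ increases, hit the region where the profile vanishes; a continuity/connectedness argument on this one-parameter family, together with the fact that the two interface behaviors are distinct attractors for profiles reaching zero at a finite point, yields one profile of each interface type. For $m+p=2$, the two interface behaviors coalesce; here I would use the conserved-quantity-type obstruction (the same computation producing $\xi_{max}$ in the $N=1$ case, which survives the addition of the $(N-1)/\xi$ term because that term has a definite sign) to show that no orbit leaving $P_0$ can reach zero beyond $\xi_{max}$, while a shooting argument in the one parameter labeling the $P_0$-orbits produces, for every target $\xi_0 \in (0,\xi_{max}]$, an orbit whose interface is exactly at $\xi_0$. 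For $m+p<2$, the interface exponent $1/(1-p) < 1/(m-1)$ forces $(f^m)'$ to blow up at any putative interface, i.e. the corresponding point at infinity is not reachable by a trajectory that also satisfies the weak-solution flux condition $(f^m)'(\xi_0)=0$; combined with a monotonicity argument showing good profiles cannot vanish at a finite point at all, this rules out interfaces entirely.

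I expect the main obstacle to be the shooting/continuity argument in higher dimension: in $N=1$ one typically exploits an explicit first integral or a Lyapunov-type functional to control the one-parameter family of orbits from $P_0$, and the extra dissipative term $\frac{N-1}{\xi}(f^m)'$ destroys exact integrability, so one must instead argue with differential inequalities and careful sign analysis of the vector field in well-chosen invariant regions to guarantee (a) that orbits do not escape to the "wrong" critical point at infinity, and (b) that the map $\text{parameter} \mapsto \text{interface location}$ is continuous and has the claimed image. A secondary technical point is verifying that no orbit from $P_0$ instead connects to $Q_1$ or to the behavior \eqref{beh.P2} before developing an interface; this requires showing these other critical points are "far" from the relevant branch of $W^u(P_0)$, which I would handle by linearizing at each critical point, computing the dimensions of the stable/unstable manifolds as functions of $m,p,\sigma,N$, and checking a transversality/dimension count that forces the generic orbit to terminate at an interface.
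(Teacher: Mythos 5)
Your overall strategy (autonomous phase--space reformulation, local analysis at critical points, topological shooting with a three--sets argument) is the same family of ideas the paper uses, but the specific shooting construction you propose has gaps that would make the argument fail. First, for $m+p>2$ you claim that the one--parameter unstable family emanating from $P_0$ (profiles with behavior \eqref{beh.P0} at the origin) must produce one profile of each interface type. This cannot work: by Theorem \ref{th.small}, Part 2, for $\sigma$ close to the lower bound $2(1-p)/(m-1)$ \emph{every} good profile with behavior \eqref{beh.P0} at the origin has an interface of Type II, so no Type I interface can be reached by shooting from $P_0$ alone; the Type I profile in that regime has behavior \eqref{beh.Q1} at the origin. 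The paper instead shoots \emph{backward from the interface point}: it passes to variables $(U,Y,V)$ in which $V$ is a monotone function of $\xi$, so the single critical point $P_1$ unfolds into a one--parameter family $P(v_0)$ indexed by the interface location; it then shows that for $v_0$ small the unique entering orbit comes from $Q_5$ (vertical asymptote) and for $v_0$ large from $Q_2$ (vanishing with $(f^m)'(\xi_0)>0$), and the three--sets argument produces an intermediate $v_0$ whose orbit originates at one of $P_0$, $P_2$ or $Q_1$ --- whichever it is, it is a good profile. The same issue affects your $m+p=2$ argument: surjectivity of the map from the $P_0$--family parameter onto all interface locations $\xi_0\in(0,\xi_{max}]$ is not available (and is not what happens); the paper fixes $\xi_0$, i.e.\ fixes the critical point $P_0^{\lambda}$ on the critical parabola, and shoots in the transverse parameter $K$ of a local one--parameter approximation of its stable (or center--stable) manifold, again sandwiching a good connection between connections from $Q_5$ and from $Q_2$. (Your identification of the $\xi_{max}$ obstruction with the monotonicity of the $Z$--component, which is unaffected by $N$, is essentially correct.)

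Second, your argument for $m+p<2$ is wrong as stated. For $f\sim(\xi_0-\xi)^{1/(m-1)}$ one has $(f^m)'\sim(\xi_0-\xi)^{1/(m-1)}\to0$, and for $f\sim(\xi_0-\xi)^{1/(1-p)}$ one has $(f^m)'\sim(\xi_0-\xi)^{(m-1+p)/(1-p)}\to0$ since $m>1$ and $p>0$; so the flux condition $(f^m)'(\xi_0)=0$ is formally satisfied by \emph{both} candidate interface exponents regardless of the sign of $m+p-2$, and no interface is ruled out by a blow--up of $(f^m)'$. The actual obstruction is dynamical: the sign of $m+p-2$ enters the equation $\dot Z=Z[(m+p-2)Y+(\sigma-2)X]$ and changes the character of the critical points that would host interface behaviors, so that the corresponding local behaviors are attained as $\xi\to\infty$ rather than at a finite $\xi_0$ (the paper defers to \cite[Section 7]{IS20b} for this). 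Finally, a smaller point: you worry about orbits from $P_0$ "connecting to $Q_1$ or to the behavior \eqref{beh.P2}", but $Q_1$ and $P_2$ are sources in the relevant directions (their interesting manifolds are unstable), so the concern is about where orbits entering an interface point \emph{come from}, not where orbits from $P_0$ go; this again points to the backward--shooting viewpoint as the natural one.
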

Let us notice here that for $m+p=2$ we do not specify what kind of interface the profile might have at $\xi=\xi_0$, since, as mentioned above, the two types of interface coincide in this case. However, it is a very interesting point to classify the good profiles with respect to their behavior as $\xi\to0$, given the three possibilities in Definition \ref{def1}.

\medskip

\noindent \textbf{Classification of the profiles}. This is the finest point of the paper, which is partly analogous and partly improves (even for $N=1$) the classification performed in the two works \cite{IS20b, IS21b} devoted to the problem in dimension $N=1$. Before stating the results, let us stress that this classification has an immediate consequence on the blow-up set of the solutions and in particular on the answer to the question whether $x=0$ (where at formal level there is no reaction) can be a blow-up point. More precisely, we observe that

$\bullet$ the solutions $u(x,t)$ given by \eqref{BSS} with profiles $f(\xi)$ behaving as in \eqref{beh.Q1} \emph{blow up simultaneously} at every point $x\in\real^N$:
\begin{equation*}
u(x,t)=(T-t)^{-\alpha}f(|x|(T-t)^{\beta})\sim f(0)(T-t)^{-\alpha}, \quad {\rm as} \ t\to T.
\end{equation*}

$\bullet$ the solutions $u(x,t)$ given by \eqref{BSS} with profiles $f(\xi)$ behaving as in \eqref{beh.P2} also \emph{blow up simultaneously} at every point $x\in\real^N$ but \emph{with different rate}:
\begin{equation*}
u(x,t)\sim K(T-t)^{-\alpha+2\beta/(m-1)}|x|^{2/(m-1)}=K(T-t)^{-1/(m-1)}|x|^{2/(m-1)}, \quad {\rm as} \ t\to T.
\end{equation*}

$\bullet$ the solutions $u(x,t)$ given by \eqref{BSS} with profiles $f(\xi)$ behaving as in \eqref{beh.P0} blow up \emph{only at space infinity}, in the sense of \cite{La84, GU06}: $\|u(t)\|_{\infty}\to\infty$ as $t\to T$ but the maximum is attained on curves $x(t)$ depending on $t$ such that $|x(t)|\to\infty$ as $t\to T$, as shown by the following estimate for any $x\in\real^N$ fixed:
$$
u(x,t)\sim C(T-t)^{-\alpha+(\sigma+2)\beta/(m-p)}|x|^{(\sigma+2)/(m-p)}=C|x|^{(\sigma+2)/(m-p)}<\infty, \quad {\rm as} \ t\to T.
$$

The classification of the profiles depends strongly on the magnitude of $\sigma$. We state first the classification for $\sigma$ sufficiently close to its lower bound $2(1-p)/(m-1)$, always with $m+p\geq2$, as otherwise there are no good profiles at all.
\begin{theorem}\label{th.small}
Let $m$, $p$ and $\sigma$ be as in \eqref{range.exp}. We then have:
\begin{enumerate}
  \item For any $\sigma>2(1-p)/(m-1)$, there exist good profiles with interface of Type II and with behavior given by \eqref{beh.P0} as $\xi\to0$.
  \item There exists $\sigma_0\in(2(1-p)/(m-1),\infty)$ such that for any $\sigma\in(2(1-p)/(m-1),\sigma_0)$, \textbf{all} good profiles behaving as in \eqref{beh.P0} and \eqref{beh.P2} as $\xi\to0$ present an interface of Type II. Moreover, in this same range of $\sigma$, there exist good profiles with interface of Type I whose behavior is given by \eqref{beh.Q1} as $\xi\to0$.
  \item If $m+p>2$, there exists $\sigma_*\in(2(1-p)/(m-1),\infty)$ such that for $\sigma=\sigma_*$ there exists a unique profile $f_*(\xi)$ with behavior as in \eqref{beh.P2} as $\xi\to0$ and with interface of Type I.
  \item If $m+p=2$, there exists $\sigma_0>2$ such that for $\sigma\in(2,\sigma_0)$, there exist good profiles with interface having any of the three possible behaviors as $\xi\to0$ in Definition \ref{def1}.
  \item If $m+p=2$, then for any $\xi_0\in(0,\xi_{max}]$ there exists $\sigma(\xi_0)>2$ (depending on $\xi_0$) such that for $\sigma=\sigma(\xi_0)$ there exists a good profile with interface at $\xi=\xi_0$ and behavior as in \eqref{beh.P2} as $\xi\to0$.
\end{enumerate}
\end{theorem}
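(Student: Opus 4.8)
\medskip

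\noindent\textbf{Proof strategy.}
The plan is a phase-space analysis of the profile equation \eqref{SSODE}, in the spirit of the one-dimensional works \cite{IS20b,IS21b} (and, for $\sigma=0$, \cite{S4}), combined with shooting and continuity arguments in the parameter $\sigma$. First recast \eqref{SSODE} as an autonomous polynomial system for suitable monomial variables $(X,Y,Z)$ built from $\xi$, $f$, $(f^m)'$, together with a rescaled independent variable. In these coordinates: behavior \eqref{beh.Q1} is (part of) the unstable manifold of an isolated critical point $Q_1$, carrying a one-parameter family of orbits indexed by $C$; behavior \eqref{beh.P2} is a \emph{single} orbit leaving an isolated critical point $P_2$ (whence the uniqueness of that profile); behavior \eqref{beh.P0} is the unstable set of a one-parameter family of critical points $P_0(K)$ (possibly on the sphere at infinity of the compactified system) — this family appears precisely because, for $\sigma>2(1-p)/(m-1)$, both the diffusion and the reaction terms are subdominant near $\xi=0$ relative to $\beta\xi f'-\alpha f$, so the leading balance there is $\beta\xi f'=\alpha f$, i.e. $f\sim K\xi^{(\sigma+2)/(m-p)}$. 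On the other side, a Type I interface at $\xi_0$ is the stable manifold of a critical point $P_I$, and a Type II interface at $\xi_0$ that of a critical point $P_{II}$ (one orbit per $\xi_0$ in each case); these sit at infinity and must be resolved by blowing up the compactified system, $P_I$ and $P_{II}$ being distinguished by whether the reaction monomial stays dominant at the contact or not. A good profile with interface is exactly a heteroclinic connection from one of $Q_1,P_2,P_0(K)$ to one of $P_I,P_{II}$; Theorem~\ref{th.exist} guarantees at least one such connection when $m+p\ge2$, with interface point ranging over $(0,\xi_{max}]$ (cf.~\eqref{ximax}) when $m+p=2$.

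For item 1, fix any admissible $\sigma$. I would construct a compact invariant (trapping) region along the unstable set of $P_0(K)$ whose only admissible exit is towards $\{f=0\}$, using that $f$ becomes decreasing once $\xi^\sigma f^p$ governs the flow; hence every such orbit reaches an interface at a finite $\xi_0$. There $f\to0$ and, as $p<1$ forces $f^p\gg f^m$, the diffusion term cannot dominate, so the balance is $\beta\xi_0 f'+\xi_0^\sigma f^p\approx 0$, giving $f\sim(\xi_0-\xi)^{1/(1-p)}$: the connection is $P_0(K)\to P_{II}$, which is item 1.

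For items 2 and 4, exploit the degeneration of \eqref{SSODE} as $\sigma\downarrow 2(1-p)/(m-1)$: after dividing by $\beta$ (with $\beta\to+\infty$ and $\alpha/\beta\to 2/(m-1)$) the equation formally collapses to $\xi f'=\tfrac{2}{m-1}f$, whose solutions are the powers $f=K\xi^{2/(m-1)}$ — exactly the profile underlying \eqref{beh.P2} and, in the limit, \eqref{beh.P0}. Made uniform on compact $\xi$-ranges, this forces, for $\sigma$ small, every orbit from $P_0(K)$ and the $P_2$-orbit to stay near such a power until very close to its interface, where the local analysis of item 1 yields a Type II contact. Letting $\sigma_0$ be the supremum of the set of such $\sigma$ (an interval, by continuity of the flow in $\sigma$) gives item 2 for the \eqref{beh.P0} and \eqref{beh.P2} profiles, and, restricting to $m+p=2$ and using $\xi_0\le\xi_{max}$, item 4. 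The good profile with \eqref{beh.Q1} and a Type I interface in this range comes from a separate shooting inside the $Q_1$-family: for $C$ large the profile is so tall that $f^m\gg f^p$ throughout, the dynamics is essentially that of a porous-medium-type profile, and a Type I interface is forced — an open condition, hence stable under small $\sigma$ (shrinking $\sigma_0$ if needed).

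Finally, for items 3 and 5, track the unique $P_2$-orbit as $\sigma$ grows: for $\sigma<\sigma_0$ it lands on $P_{II}$, while beyond that range the global configuration changes and the $P_2$-orbit is pushed off $P_{II}$. Connecting to $P_I$ requires the reaction monomial to be \emph{exactly} subdominant at the contact, i.e. the orbit to enter the resolved point $P_I$ along a codimension-one direction that separates the two regimes; so a continuity/intermediate-value argument produces at least one value $\sigma=\sigma_*$ for which the $P_2$-orbit connects precisely to $P_I$, and the resulting profile $f_*$ is unique since the $P_2$-orbit is. That is item 3; when $m+p=2$, where the two interface types coincide and the interface point is a genuine free parameter in $(0,\xi_{max}]$, the same argument with $\xi_0$ as parameter gives $\sigma(\xi_0)$, hence item 5. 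The step I expect to be the main obstacle is the \emph{global} control needed for items 2 and 4 — the ``all'' quantifier: showing that for $\sigma<\sigma_0$ \emph{no} orbit from $P_0(K)$ and \emph{not} the $P_2$-orbit can escape either to $P_I$ or to a point at infinity representing a non-compactly-supported profile. This calls for an invariant surface separating the basins of $P_I$ and $P_{II}$, or a monotone Lyapunov-type quantity along the flow; producing such an object for the three-dimensional resolved system with the weight $\xi^\sigma$ present is the genuinely delicate point.
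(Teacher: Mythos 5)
Your overall framework (phase space, shooting, continuity in $\sigma$, three-sets argument for item 3) matches the paper's philosophy, and your outline of item 3 is essentially the paper's proof: the set of $\sigma$ for which the $P_2$-orbit lands on a Type II object and the set for which it escapes (to the stable node $Q_3$ at infinity, for $\sigma$ large) are both open and nonempty, so their complement contains a $\sigma_*$ where the unique $P_2$-orbit enters $P_1$. However, there are genuine gaps elsewhere. The most serious is item 1. In the paper, behavior \eqref{beh.P0} at $\xi\to0$ and the Type II interface are \emph{the same critical point} $P_0=(0,0,0)$, which is non-hyperbolic; Part 1 is proved by a center-manifold reduction and Date's classification of homogeneous quadratic systems, which exhibits an \emph{elliptic sector} at $P_0$: orbits that leave $P_0$ and return to it. Your substitute --- a trapping region forcing every orbit from the $P_0(K)$ family to reach $\{f=0\}$, followed by the claim that the contact must be Type II because ``$p<1$ forces $f^p\gg f^m$, so the diffusion term cannot dominate'' --- fails on both counts. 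Reaching $f=0$ does not produce an interface: the phase space contains a stable node ($Q_3$) collecting orbits with $f(\xi_0)=0$ and $(f^m)'(\xi_0)<0$, which violate the flux condition. And the dominance argument is wrong: near a Type I interface $f\sim(\xi_0-\xi)^{1/(m-1)}$ one has $f^m\ll f^p$ yet $(f^m)''$ is bounded away from zero, so a pointwise comparison of $f^m$ with $f^p$ cannot decide the contact type --- indeed both interface types coexist for the same equation when $m+p>2$.

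For items 2 and 4 you correctly identify the ``all'' quantifier as the crux, but the degeneration heuristic as $\sigma\downarrow 2(1-p)/(m-1)$ is not a proof, and the separating object you say would be needed is precisely what the paper constructs explicitly: in modified variables $(U,Y,V)$ with $V$ a monotone function of $\xi$, a plane $\{NY+kV=1\}$ acts as a barrier forcing the orbits from $P_2$ and $P_0$ to cross $\{Y=0\}$ with $UV$ below an explicit threshold that tends to $0$ as $\sigma$ approaches its lower bound; the region bounded by $\{Y=0\}$, $\{Y=-\beta/2\alpha\}$ and the hyperbolic cylinder $\{UV=\beta^2/4\alpha^2\}$ is then invariant and contains only $P_0$ in its closure (with an analogous system of regions $D_1,D_2,D_3$ and the parabolic cylinder when $m+p=2$). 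Your claim that for large $C$ the $Q_1$-orbits are ``forced'' to have a Type I interface is also unsubstantiated --- and cannot hold uniformly in $\sigma$, since Theorem \ref{th.large} excludes \eqref{beh.Q1} profiles with interface for $\sigma$ large; the paper instead obtains the Type I/\eqref{beh.Q1} profiles for small $\sigma$ by elimination from Theorem \ref{th.exist}. Finally, for item 5, ``the same argument with $\xi_0$ as parameter'' skips the key step: one must show that the landing point $\lambda(\sigma)$ of the $P_2$-orbit on the critical parabola sweeps the whole range $[-\beta/2\alpha,0)$ as $\sigma$ runs over $(2,\sigma^*)$, which the paper does via the separate lemma $\liminf_{\sigma\to2}\lambda(\sigma)=0$ combined with the fact that at $\sigma^*$ the orbit reaches the peak of the parabola.
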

We thus find that, for $\sigma$ sufficiently close to its lower bound in \eqref{range.exp}, there are in fact good profiles with interface of all three possible types with respect to their local behavior as $\xi\to0$. Moreover, in the specific case $m+p=2$, we can even fix the interface point and get this result. This is in sharp contrast to higher values of the exponent $\sigma$, where only a single type of behavior may exist, as it readily follows from the following statement.
\begin{theorem}\label{th.large}
Let $m$, $p$ and $\sigma$ be as in \eqref{range.exp}. Then there exists $\sigma_1$ sufficiently large such that for any $\sigma\in(\sigma_1,\infty)$, there are no good profiles with interface and local behavior as $\xi\to0$ given by either \eqref{beh.Q1} or \eqref{beh.P2}. Thus \textbf{all} the good profiles with interface (of both types) behave as in \eqref{beh.P0} as $\xi\to0$.
\end{theorem}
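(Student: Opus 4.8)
The plan is to argue by contradiction and compactness, exploiting that \eqref{SSODE} degenerates as $\sigma\to\infty$: by \eqref{selfsim.exp} we have $\alpha\to1/(m-1)$ and $\beta\to0$, while the weight $\xi^{\sigma}$ tends to $0$ locally uniformly on $(0,1)$ and to $+\infty$ on $(1,\infty)$. Assume there were a sequence $\sigma_n\to\infty$ in the range \eqref{range.exp} and good profiles $f_n$ of \eqref{SSODE}, with the corresponding exponents $\alpha_n,\beta_n$, each having an interface at some $\xi_{0,n}\in(0,\infty)$ and behavior \eqref{beh.Q1} or \eqref{beh.P2} at $\xi=0$; since the case $m+p<2$ is vacuous by Theorem \ref{th.exist}, we may take $m+p\ge2$, and we aim for a contradiction.

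\textbf{Passing to the limit; the case $m+p=2$.} After the necessary a priori bounds — a uniform bound $\sup_n\|f_n\|_{L^\infty}<\infty$ (via a rescaling/blow-up argument, together with control of the parameter $C$ in \eqref{beh.Q1}) and a localization $\xi_{0,n}\le 1+o(1)$ of the interface point (for $m+p=2$ this is exactly $\xi_{0,n}\le\xi_{max}(\sigma_n)$ from \eqref{ximax}; for $m+p>2$ one reads it off \eqref{SSODE} evaluated at the last interior maximum $\xi_m$ of $f_n$, where $\alpha_n f_n(\xi_m)\ge\xi_m^{\sigma_n}f_n(\xi_m)^p$ pushes $\xi_m$, hence $\xi_{0,n}$, down toward $1$) — one uses \eqref{SSODE} to get $C^{1,\theta}_{\mathrm{loc}}$ bounds for $f_n^m$ on compact subsets of $(0,1)$ on which $f_n$ is bounded below. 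Since near $\xi=0$ the reaction is a uniformly negligible perturbation, the behavior \eqref{beh.Q1} (with a convergent constant) or \eqref{beh.P2} (with its fixed constant) passes to the limit, and one extracts along a subsequence $f_n\to f_\infty$ locally uniformly on $[0,1)$, where $f_\infty$ solves the pure porous-medium profile equation $(f^m)''+\frac{N-1}{\xi}(f^m)'=\frac{1}{m-1}f$ on $(0,1)$ with behavior \eqref{beh.Q1} or \eqref{beh.P2} at the origin. As $\bigl(\xi^{N-1}(f_\infty^m)'\bigr)'=\tfrac{\xi^{N-1}}{m-1}f_\infty>0$ wherever $f_\infty>0$, and $\xi^{N-1}(f_\infty^m)'\to0$ as $\xi\to0^+$ under either behavior, we get $\xi^{N-1}(f_\infty^m)'>0$ on $(0,1)$, so $f_\infty$ is strictly increasing and strictly positive on $(0,1)$. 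Hence, for large $n$, $f_n\ge\tfrac12 f_\infty(\delta)>0$ on every $[\delta,1]$, which forces $\xi_{0,n}>1$. For $m+p=2$ this contradicts $\xi_{0,n}\le\xi_{max}(\sigma_n)=\bigl(m(\sigma_n-2)^2\bigr)^{-1/(\sigma_n-2)}<1$ for $\sigma_n$ large, proving the theorem in that case.

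\textbf{The crash interval when $m+p>2$.} It remains to exclude $\xi_{0,n}>1$ for $m+p>2$. Then the a priori localization yields $\xi_{0,n}\to1^{+}$, so $f_n$ crashes on the shrinking interval $[1,\xi_{0,n}]$ from $f_n(1)\to f_\infty(1)>0$ down to $0$, and there the weight $\xi^{\sigma_n}\ge1$ is dominant, so \eqref{SSODE} reads $(f_n^m)''\approx-\xi^{\sigma_n}f_n^p$. Multiplying \eqref{SSODE} by $(f_n^m)'$ and integrating over $[1,\xi_{0,n}]$ (using $f_n(\xi_{0,n})=0$) gives an identity expressing $\tfrac12[(f_n^m)'(\xi_{0,n})]^2$ as the sum of the bounded boundary terms at $\xi=1$ and the nonnegative reaction contribution $\tfrac{m\sigma_n}{m+p}\int_1^{\xi_{0,n}}\xi^{\sigma_n-1}f_n^{m+p}\,d\xi$, minus the nonnegative diffusion and transport integrals. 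A quantitative analysis of the crash — locating the length $\xi_{0,n}-1$ and the scale on which the reaction gives way to the transport term $\beta\xi f'$ near the interface — shows the right-hand side stays bounded away from $0$: the boundary term $\tfrac{m}{m+p}f_n(1)^{m+p}\to\tfrac{m}{m+p}f_\infty(1)^{m+p}>0$ is not absorbed, since the transport integral carries the vanishing factor $\beta_n$ and the diffusion integral cannot be large while the reaction is dominant. Hence $(f_n^m)'(\xi_{0,n})\neq0$: $f_n$ \emph{overshoots} the zero level with nonzero flux, so it is not a good profile with an interface — contradiction. This proves that for $\sigma$ large enough, all good profiles with interface behave as \eqref{beh.P0} as $\xi\to0$.

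\textbf{Main obstacle.} The hard step is the quantitative crash analysis for $m+p>2$: at the endpoint $\xi_{0,n}$ a Type I interface is governed by $\beta\xi f'$, which — although its coefficient $\to0$ as $\sigma\to\infty$ — is of lower differential order, hence a priori competitive with the reaction there; one must pin down the transition scale, estimate $\xi_{0,n}-1$, and show that $[(f_n^m)']^2+\tfrac{2m}{m+p}f_n^{m+p}$ gains a definite positive increment across the reaction-dominated part before transport matters, the awkward term being $\beta_n\int_1^{\xi_{0,n}}\xi f_n^{m-1}(f_n')^2\,d\xi$, where $(f_n')^2$ is large on a thin interval and must be controlled against the small factor $\beta_n$. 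Secondary difficulties are the uniform $L^\infty$ bound (requiring its own rescaling analysis) and, for $m+p>2$, the interface localization substituting for $\xi_{max}$; alternatively, the whole argument can be recast in the phase space associated with \eqref{SSODE}, where the statement amounts to showing that for $\sigma$ large the trajectories emanating from the critical points attached to \eqref{beh.Q1} and \eqref{beh.P2} no longer connect to the critical points describing the two interface behaviors.
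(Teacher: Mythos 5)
Your strategy (a compactness/degeneration argument as $\sigma\to\infty$, passing to the limit equation $(f^m)''+\frac{N-1}{\xi}(f^m)'=\frac{1}{m-1}f$ on $(0,1)$ and showing the limit profile is increasing, hence cannot reach an interface before $\xi=1$ where the weight explodes) is genuinely different from the paper's, which stays entirely in the phase space: there one builds two planes $(\Pi_1)$, $(\Pi_2)$ through $P_2$ acting as barriers and shows that for $\sigma$ large the orbits emanating from $P_2$ and $Q_1$ are trapped in $\{Y<-Y_0\}$ with $Y_0=(m-1)/2$, hence can never reach $P_1$ or the critical parabola and must end at $Q_3$ (vanishing with nonzero slope). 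Your route is conceptually attractive and more PDE-flavoured, and it gives a clean proof in the case $m+p=2$ via $\xi_{max}<1$; but as written it has genuine gaps, which go beyond presentation.

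First, the a priori bounds on which the compactness rests are asserted rather than proved. For behavior \eqref{beh.Q1} the constant $C>0$ is a free parameter of the profile, and nothing in your argument prevents $C_n\to\infty$ (no compactness) or $C_n\to0$ (in which case $f_\infty$ may vanish near the origin and the conclusion $f_\infty>0$, on which everything hinges, fails). The paper avoids this entirely because in the phase space \emph{all} orbits from $Q_1$, regardless of $C$, emanate from a single critical point and are controlled simultaneously. Second, your localization of the interface for $m+p>2$ is based on an inequality in the wrong direction: at an interior maximum $\xi_m$ of $f_n$ one has $(f_n^m)''(\xi_m)\le0$, and \eqref{SSODE} then gives $\alpha_n f_n(\xi_m)\le\xi_m^{\sigma_n}f_n(\xi_m)^p$, i.e.\ $\xi_m\ge(\alpha_n f_n(\xi_m)^{1-p})^{1/\sigma_n}$ --- a \emph{lower} bound on $\xi_m$ (hence on $\xi_{0,n}$), not the upper bound $\xi_{0,n}\le1+o(1)$ that your crash analysis requires. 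Third, the crash analysis itself, which you correctly identify as the main obstacle, is not carried out: the energy identity leaves the term $\beta_n\int_1^{\xi_{0,n}}\xi f_n^{m-1}(f_n')^2\,d\xi$ uncontrolled, and since $(f_n')^2$ concentrates on the shrinking interval $[1,\xi_{0,n}]$ there is no a priori reason the small factor $\beta_n\sim(m-p)/\sigma_n(m-1)$ wins. So the case $m+p>2$ (and the $\liminf C_n=0$ or $\limsup C_n=\infty$ scenarios for \eqref{beh.Q1}) remains open in your argument; to complete it you would need either the missing rescaling analysis near the interface or, as in the paper, a global barrier construction in the phase space that handles all these orbits at once.
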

In particular, we notice that the answer to the question of whether $x=0$ can be a blow-up point or not depends strongly on $\sigma$: for $\sigma$ relatively close to the lower limit given in \eqref{range.exp} this may happen, while for $\sigma>\sigma_1$ the point $x=0$ is not a blow-up point at least for self-similar solutions, and \emph{all} such solutions blow up at space infinity. Let us notice that Theorem \ref{th.large} brings an \emph{improvement which is new also in dimension $N=1$} with respect to \cite{IS20b, IS21b}, namely, the fact that also \eqref{beh.Q1} is ruled out for $\sigma$ large. This is very significant, as it limits to one single type the blow-up patterns and the type of blow-up expected in general for such big $\sigma$. Let us also remark that the two interface behaviors are completely different, as the expansion of the supports of the solutions obeys two different \emph{interface equations}, as discussed in the Introduction to \cite{IS20b}. We omit here this discussion, which is very similar to the one in the mentioned work devoted to dimension $N=1$.

\medskip

\noindent \textbf{Structure of the paper}. As explained above, the main technique we employ for the proofs is the \emph{phase-space analysis}, extending and adapting the dynamical system used for dimension $N=1$ in \cite{IS20b, IS21b}. The description and local analysis of the dynamical system we use is split into two parts: a Section \ref{sec.local} devoted to the analysis of the finite critical points and a Section \ref{sec.inf} devoted to the analysis of the infinite critical points. The latter will depart from the analysis in dimension $N=1$. The critical dimension $N=2$ gives rise to what is known as a \emph{transcritical bifurcation} in the phase space, thus the analysis is also split between the cases $N\geq3$ and $N=2$. The proof (very similar to the one-dimensional case) of the existence Theorem \ref{th.exist} is given in Section \ref{sec.exist}. The most specialized and deepest part of the paper, dealing with the classification of the profiles, will be split between Section \ref{sec.small}, where Theorem \ref{th.small} is proved, with a different approach for exponents satisfying $m+p=2$, and Section \ref{sec.large} dedicated to the classification of the profiles for $\sigma$ large, where there is no need to consider the case $m+p=2$ separately. This last section includes the proof of Theorem \ref{th.large}, which follows a new geometrical construction leading to an important improvement also in dimension $N=1$ over previous results published in \cite{IS20b, IS21b}.

\section{The phase space. Local analysis}\label{sec.local}

The main tool in the proofs of the main results of the present work is the analysis of a phase space associated to a quadratic dynamical system which is equivalent to the non-autonomous equation \eqref{SSODE}. Following the same change of variable as in dimension $N=1$, we transform Eq. \eqref{SSODE} into an autonomous system by letting
\begin{equation}\label{PSvar1}
X(\eta)=\frac{m}{\alpha}\xi^{-2}f^{m-1}(\xi), \ Y(\eta)=\frac{m}{\alpha}\xi^{-1}f^{m-2}(\xi)f'(\xi), \ Z(\eta)=\frac{m}{\alpha^2}\xi^{\sigma-2}f^{m+p-2}(\xi),
\end{equation}
where we recall that $\alpha$ (and also $\beta$) is defined in \eqref{selfsim.exp} and the new independent variable $\eta=\eta(\xi)$ is defined through the differential equation
$$
\frac{d\eta}{d\xi}=\frac{\alpha}{m}\xi f^{1-m}(\xi).
$$
The differential equation \eqref{SSODE} transforms into a system where the influence of the new dimension $N$ appears in the second equation:
\begin{equation}\label{PSsyst1}
\left\{\begin{array}{ll}\dot{X}=X[(m-1)Y-2X],\\
\dot{Y}=-Y^2-\frac{\beta}{\alpha}Y+X-NXY-Z,\\
\dot{Z}=Z[(m+p-2)Y+(\sigma-2)X].\end{array}\right.
\end{equation}
Notice first that the planes $\{X=0\}$ and $\{Z=0\}$ are invariant and that $X\geq0$, $Z\geq0$, which is in fact obvious from their definitions in \eqref{PSvar1}, only $Y$ being allowed to change sign. If $m+p>2$, we readily find that there are three critical points in the finite plane:
$$
P_0=(0,0,0), \ P_1=\left(0,-\frac{\beta}{\alpha},0\right), \ {\rm and} \ P_2=\left(\frac{m-1}{2\alpha(mN-N+2)},\frac{1}{\alpha(mN-N+2)},0\right).
$$
An interesting change appears in the case $m+p=2$, when the third equation simplifies and allows for new critical points. Indeed, by letting $X=0$ we already vanish both first and third equation in \eqref{PSvar1}, thus being left with a full \emph{critical parabola}, that is
$$
P_0^{\lambda}=\left(0,\lambda,-\lambda^2-\frac{\beta}{\alpha}\lambda\right), \qquad \lambda\in\left[-\frac{\beta}{\alpha},0\right]
$$
connecting the ancient critical points $P_0$ and $P_1$. This critical parabola, also considered in dimension $N=1$ in \cite{IS21b}, introduces important differences in the analysis of the special case $m+p=2$.

\subsection{Local analysis for $m+p>2$}\label{subsec.bigger}

The \textbf{critical point $P_0$} is the most complicated one to study, as it is non-hyperbolic. Indeed, the linearization of the system \eqref{PSsyst1} in a neighborhood of $P_0$ has the matrix
$$
M(P_0)=\left(
      \begin{array}{ccc}
        0 & 0 & 0 \\
        1 & -\frac{\beta}{\alpha} & -1 \\
        0 & 0 & 0 \\
      \end{array}
    \right)
$$
with a one-dimensional stable manifold and a two-dimensional center manifold.
\begin{lemma}[Local analysis near $P_0$]\label{lem.P0}
Let $m$, $p$ and $\sigma$ as in \eqref{range.exp} and such that $m+p>2$. Then the critical point $P_0$ presents an elliptic sector and a hyperbolic sector. The orbits contained in the elliptic sector go out and enter $P_0$ and contain profiles $f(\xi)$ with behavior given by \eqref{beh.P0} as $\xi\to0$ and with an interface of Type II.
\end{lemma}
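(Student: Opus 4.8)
The plan is to determine the local phase portrait of the system \eqref{PSsyst1} at the non-hyperbolic point $P_0$ by reducing to its two-dimensional center manifold, then desingularizing the resulting degenerate planar singularity by successive blow-ups, and finally translating the sectorial structure back into asymptotics of $f$ through the change of variables \eqref{PSvar1}.

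First one records the linear picture: $M(P_0)$ has eigenvalues $0,0,-\beta/\alpha$, and since $\alpha,\beta>0$ in the range \eqref{range.exp} the eigenvalue $-\beta/\alpha$ is negative, with eigenvector along the $Y$-axis, which is exactly the invariant line $\{X=0\}\cap\{Z=0\}$; on it $\dot Y=-Y(Y+\beta/\alpha)$, so the segment $Y>-\beta/\alpha$ flows into $P_0$, giving the one-dimensional stable manifold. The remaining directions form a two-dimensional center manifold $\mathcal W^c$, tangent at $P_0$ to the plane $X-(\beta/\alpha)Y-Z=0$; because $\{X=0\}$ and $\{Z=0\}$ are invariant, $\mathcal W^c$ is locally a graph $Y=h(X,Z)$ with $h(X,Z)=\tfrac{\alpha}{\beta}(X-Z)+O(|(X,Z)|^2)$ for small $X,Z\ge0$. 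Substituting into the first and third equations of \eqref{PSsyst1} yields a planar reduced system, quadratic and with vanishing linear part, whose two positive semiaxes are invariant orbits. On $\{Z=0\}$ it reads $\dot X=\big((m-1)\tfrac{\alpha}{\beta}-2\big)X^2+O(X^3)$ with $(m-1)\tfrac{\alpha}{\beta}-2=\big((m-1)\sigma-2(1-p)\big)/(m-p)>0$ by \eqref{range.exp}, so the positive $X$-semiaxis emanates from $P_0$; on $\{X=0\}$ it reads $\dot Z=-(m+p-2)\tfrac{\alpha}{\beta}Z^2+O(Z^3)<0$ because $m+p>2$, so the positive $Z$-semiaxis flows into $P_0$.

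The core of the argument is the desingularization of the origin of this reduced system, which sits exactly at the junction of the two invariant semiaxes and is too degenerate for a single blow-up. One performs a polar blow-up and then secondary quasi-homogeneous blow-ups at the non-hyperbolic equilibria created over the $X$- and $Z$-axes; after this every equilibrium on the exceptional divisor is hyperbolic or semi-hyperbolic, the type of the adjacent sectors follows from the signs of the corresponding eigenvalues, and blowing down reconstructs the local picture inside $\{X\ge0,Z\ge0\}$: $P_0$ has exactly one elliptic sector and one hyperbolic sector, the elliptic one being swept by a one-parameter family of orbits that leave $P_0$ tangentially to the $X$-direction and return to it tangentially to the $Z$-direction. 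I expect this step — carrying out the blow-ups correctly and bookkeeping which sector is elliptic and which hyperbolic — to be the main obstacle, and it is where the argument genuinely departs from the naive linearization.

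Finally one reads off $f$ along the elliptic-sector orbits. Using $d\eta/d\xi=\tfrac{\alpha}{m}\xi f^{1-m}$ and inverting \eqref{PSvar1}: as $\eta\to-\infty$ one has $Z/X\to0$ and $Y/X\to\alpha/\beta$, the variable $\xi$ tends to $0$ (the integral defining $\eta$ diverges at $\xi=0$ precisely because $\sigma>2(1-p)/(m-1)$), and the leading asymptotics give $f(\xi)\sim K\xi^{(\sigma+2)/(m-p)}$, i.e. \eqref{beh.P0}, with $K>0$ parametrizing the sector. As $\eta\to+\infty$ the orbit returns to $P_0$, so $X,Z\to0$ and $Y\to0$; by the local picture just obtained this happens at a finite value $\xi_0$ of $\xi$ with $f(\xi_0)=0$, and the contact rate there is fixed by the dominant balance in \eqref{SSODE} near $\xi_0$, where $\beta\xi f'$ and $\xi^\sigma f^p$ balance while $(f^m)''$ is of higher order (this uses $m+p>2$), giving $f(\xi)\sim(\xi_0-\xi)^{1/(1-p)}$ and $(f^m)'(\xi_0)=0$ — an interface of Type~II. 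Equivalently, $(\xi_0-\xi)^{1/(1-p)}$ is the only admissible rate consistent with $Y\to0$ when $m+p>2$, since the porous-medium rate $(\xi_0-\xi)^{1/(m-1)}$ would force $Y$ to a nonzero limit, i.e. the orbit to land on the $Y$-axis away from $P_0$. This would complete the argument.
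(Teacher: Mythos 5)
Your reduction is the same as the paper's: the center manifold at $P_0$ is tangent to the plane $\beta Y/\alpha=X-Z$, and the reduced flow on it is exactly the quadratic system $\dot X=\frac{1}{\beta}X[X-(m-1)\alpha Z]$, $\dot Z=\frac{1}{\beta}Z[2X-(m+p-2)\alpha Z]$ up to cubic corrections (the paper's \eqref{interm2}); your sign computations along the two invariant semiaxes, using $(m-1)\alpha-2\beta=1>0$ and $m+p>2$, are correct, and your translation of the two ends of an elliptic-sector orbit into \eqref{beh.P0} as $\xi\to0$ (from $Y/X\to\alpha/\beta=(\sigma+2)/(m-p)$) and into the Type II contact $f\sim(\xi_0-\xi)^{1/(1-p)}$ (from the balance of $\beta\xi f'$ against $\xi^{\sigma}f^p$ in \eqref{SSODE}) agrees with the paper's integration of $T\sim0$.

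The gap is that the one claim that constitutes the lemma --- that the origin of this reduced planar system has an elliptic sector --- is asserted, not proved: you explicitly defer the desingularization ("I expect this step \dots to be the main obstacle"). Everything else in your argument is downstream of that sector existing. The paper closes this step by observing that the reduced system is, to leading order, a homogeneous quadratic system and invoking Date's complete classification of $(2,2)$-homogeneous systems (the portrait is no.~8 in Date's Figure~8, as worked out in \cite[Section 2]{IS20b}). If you want to keep your blow-up route, note that it is lighter than you fear: the characteristic cubic $XQ-ZP=\frac{1}{\beta}XZ[X+(1-p)\alpha Z]$ vanishes in the closed first quadrant only on the two axes, so two directional blow-ups suffice; in the chart $Z=uX$ one finds an unstable node at $u=0$ and in the chart $X=vZ$ a stable node at $v=0$, with the exceptional divisor flowing from the former to the latter, which blows down precisely to an elliptic sector swept by orbits leaving tangent to the $X$-axis and returning tangent to the $Z$-axis. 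You would still need to say a word about why the $O(|(X,Z)|^3)$ corrections (where the dependence on $N$ hides) do not destroy this picture --- here the persistence of the two invariant axes does that work. Without one of these two completions the proof is not done.
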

\begin{proof}
We have to study the orbits contained in the center manifold near $P_0$. To this end, we perform the change of variable
$$
T:=\frac{\beta}{\alpha}Y-X+Z
$$
and after rather tedious but direct calculations, the system in variables $(X,T,Z)$ becomes
\begin{equation}\label{interm1}
\left\{\begin{array}{ll}\dot{X}&=\frac{1}{\beta}X[X+(m-1)\alpha T-(m-1)\alpha Z],\\
\dot{T}&=-\frac{\beta}{\alpha}T-\frac{\alpha}{\beta}T^2-\frac{\alpha(m+1)+N\beta}{\beta}XT-\frac{\alpha(m+p)}{\beta}TZ\\
&-\frac{m\alpha+(N-2)\beta}{\beta}X^2+\frac{(N+2)\beta+2\alpha+3}{\beta}XZ-\frac{\alpha(m+p-1)}{\beta}Z^2,\\
\dot{Z}&=\frac{1}{\beta}Z[2X+(m+p-2)\alpha T-(m+p-2)\alpha Z].\end{array}\right.
\end{equation}
We next apply the local center manifold theorem \cite[Theorem 1, Section 2.12]{Pe} to obtain the equation of the center manifold $T(X,Z)$, which after neglecting the third order terms in the equation of $\dot{T}$ in the system \eqref{interm1} becomes
$$
T=\frac{\alpha}{\beta}\left[-\frac{m\alpha+(N-2)\beta}{\beta}X^2+\frac{(N+2)\beta+2\alpha+3}{\beta}XZ-\frac{\alpha(m+p-1)}{\beta}Z^2\right]+O(|(X,Z)|^3)
$$
and the flow on the center manifold is given by the approximately homogeneous system
\begin{equation}\label{interm2}
\left\{\begin{array}{ll}\dot{X}&=\frac{1}{\beta}X[X-(m-1)\alpha Z]+O(|(X,Z)|^3),\\
\dot{Z}&=\frac{1}{\beta}Z[2X-(m+p-2)\alpha Z]+O(|(X,Z)|^3).\end{array}\right.
\end{equation}
Let us notice that the influence of the dimension $N$ in the system \eqref{interm2} is not essential, being hidden in the third order terms. Thus, this system in a neighborhood of the point $(X,Z)=(0,0)$ is well approximated by the same homogeneous system studied in detail in \cite[Section 2]{IS20b} (to which we refer the reader) with the help of the rather complicated but very useful full classification of the $(2,2)$-homogeneous dynamical systems established by Date in his well known paper \cite{Date79}. The calculations performed in \cite[Section 2]{IS20b} give that the phase portrait of the system \eqref{interm2} corresponds to the portrait no. 8 in \cite[Figure 8, p. 329]{Date79}. We thus conclude that the system \eqref{interm2} presents an elliptic sector in a sufficiently small neighborhood of the origin (together with a complementary hyperbolic one consisting in orbits that only go out of $P_0$), hence there are orbits in the phase space associated to the initial system \eqref{PSsyst1} which first go out and then return to the critical point $P_0$ along the center manifold. The local behavior of the profiles contained in these orbits in a neighborhood of the critical point $P_0$ is given by $T\sim0$, or equivalently, taking into account the definitions of $X$, $Y$, $Z$ in \eqref{PSvar1},
\begin{equation}\label{interm3}
f'(\xi)-\frac{\alpha}{\beta}\xi^{-1}f(\xi)+\frac{1}{\beta}\xi^{\sigma-1}f^{p}(\xi)\sim0,
\end{equation}
which gives by integration the desired local behavior both as going out of $P_0$ (that is, \eqref{interm3} holds true in the limit $\xi\to0$) and as entering $P_0$ (that is, \eqref{interm3} holds true in the limit $\xi\to\xi_0\in(0,\infty)$). We omit here the details, full calculations being given in \cite[Section 2]{IS20b}.
\end{proof}

\noindent \textbf{Remark.} Lemma \ref{lem.P0} has as immediate consequence the proof of Part 1 in Theorem \ref{th.small}.

\medskip

The critical point $P_1$ is related to the interfaces of Type I. Its local behavior is not influenced in any ways by the dimension $N$, thus we only state here the result and refer the reader to \cite[Lemma 3.1]{IS20b} for a full proof.
\begin{lemma}[Local analysis near $P_1$]\label{lem.P1}
The system \eqref{PSsyst1} in a neighborhood of the critical point $P_1$ has a two-dimensional stable manifold and a one-dimensional unstable manifold. The profiles contained in the orbits entering $P_1$ have an interface of type I, with a local behavior given by
\begin{equation}\label{Beh.P1}
f(\xi)\sim\left[K-\frac{\beta(m-1)}{2m}\xi^2\right]^{1/(m-1)}, \quad K>0,
\end{equation}
for $\xi\to\xi_0=\sqrt{2mK/(m-1)\beta}\in(0,\infty)$.
\end{lemma}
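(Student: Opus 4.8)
The plan is to follow the one-dimensional argument of \cite[Lemma 3.1]{IS20b}, checking that the term carrying the new dimension $N$ changes nothing essential, and then to extract the refined behavior \eqref{Beh.P1} from the linearized flow of \eqref{PSsyst1} at $P_1$.

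I would first linearize \eqref{PSsyst1} at $P_1=(0,-\beta/\alpha,0)$, obtaining
\[
M(P_1)=\left(\begin{array}{ccc} -(m-1)\frac{\beta}{\alpha} & 0 & 0 \\ 1+N\frac{\beta}{\alpha} & \frac{\beta}{\alpha} & -1 \\ 0 & 0 & -(m+p-2)\frac{\beta}{\alpha} \end{array}\right).
\]
The key remark is that the only entry in which the dimension appears is the $(2,1)$ one, and, because of the ``arrow'' shape of $M(P_1)$ (the first and third columns each carry a single off-diagonal entry, both lying in the second row), expanding the characteristic polynomial along the first column makes that entry irrelevant: the eigenvalues are $-(m-1)\beta/\alpha$, $\beta/\alpha$ and $-(m+p-2)\beta/\alpha$, exactly as for $N=1$. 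Since $\alpha,\beta>0$ in the range \eqref{range.exp} (recall \eqref{selfsim.exp}; the common denominator there is positive precisely because $\sigma>2(1-p)/(m-1)$), while $m>1$ and, in the present subsection, $m+p>2$, two of these eigenvalues are negative and one is positive; hence $P_1$ has a two-dimensional stable manifold and a one-dimensional unstable manifold. A direct computation gives $(0,1,0)$ as unstable eigenvector, so the unstable manifold lies along the invariant $Y$-axis $\{X=Z=0\}$, whereas the stable eigenspace is transverse to it and meets both half-spaces $\{X>0\}$ and $\{Z>0\}$; the profiles we want are therefore carried by the orbits on the two-dimensional stable manifold, inside the physical region $\{X>0,\ Z>0\}$, entering $P_1$ as $\eta\to+\infty$.

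Next I would translate such an orbit back to a profile through \eqref{PSvar1}. Along it $(X,Y,Z)(\eta)\to(0,-\beta/\alpha,0)$. From $X=\frac{m}{\alpha}\xi^{-2}f^{m-1}\to0$ one concludes, exactly as in \cite{IS20b} — using $d\eta/d\xi=\frac{\alpha}{m}\xi f^{1-m}$ together with the local flow to discard the alternatives $\xi\to0$ and $\xi\to\infty$ — that $\xi$ converges to a finite value $\xi_0>0$ with $f(\xi_0)=0$ and $f>0$ on a left neighbourhood of $\xi_0$. It then remains to use the middle coordinate: since $Y=\frac{m}{\alpha(m-1)}\xi^{-1}(f^{m-1})'$, the limit $Y\to-\beta/\alpha$ reads
\[
(f^{m-1})'(\xi)=\frac{\alpha(m-1)}{m}\,\xi\,Y(\xi)=-\frac{\beta(m-1)}{m}\,\xi\,\bigl(1+o(1)\bigr),\qquad \xi\to\xi_0,
\]
and integrating this from $\xi_0$, where $f^{m-1}$ vanishes (the $o(1)$-term contributing only an $o(\xi_0^2-\xi^2)$), gives
\[
f^{m-1}(\xi)\sim-\frac{\beta(m-1)}{m}\int_{\xi_0}^{\xi}s\,ds=\frac{\beta(m-1)}{2m}\bigl(\xi_0^2-\xi^2\bigr),\qquad \xi\to\xi_0.
\]
Writing $K:=\beta(m-1)\xi_0^2/(2m)>0$ this is exactly \eqref{Beh.P1}, and solving for $\xi_0$ yields $\xi_0=\sqrt{2mK/((m-1)\beta)}$. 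Finally, from $(f^m)'=\frac{m}{m-1}\,f\,(f^{m-1})'$ together with $(f^{m-1})'(\xi_0)=-\beta(m-1)\xi_0/m$ finite while $f(\xi_0)=0$, we obtain $(f^m)'(\xi_0)=0$, so $\xi_0$ is a genuine interface point; the exponent $1/(m-1)$ in \eqref{Beh.P1} identifies it as Type~I.

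The only step requiring real work is the one flagged above — showing that along an orbit entering $P_1$ the associated $\xi$ tends to a finite positive limit rather than to $0$ or to $\infty$ — and it is not new with respect to the one-dimensional setting: the dimension $N$ is absent from that argument, so one may either reproduce the relevant part of \cite[Lemma 3.1]{IS20b} verbatim or simply invoke it. Everything else reduces to the change of variables \eqref{PSvar1} and the elementary estimate above.
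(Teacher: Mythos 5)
Your proposal is correct and takes essentially the same route as the paper, which simply states the result and defers to the one-dimensional proof in \cite[Lemma 3.1]{IS20b} on the grounds that $N$ plays no role; your computation of $M(P_1)$, the observation that the $N$-dependent entry $1+N\beta/\alpha$ drops out of the characteristic polynomial, the eigenvalue signs under $m+p>2$, and the integration of $(f^{m-1})'\sim-\beta(m-1)\xi/m$ to obtain \eqref{Beh.P1} all match that argument. The one step you flag as deferred (that $\xi\to\xi_0\in(0,\infty)$ rather than $0$ or $\infty$) is indeed the only part requiring the cited reference, and it is legitimately $N$-independent.
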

We are only left with the local analysis near the critical point $P_2$, which will be one of the most important in the global behavior. Qualitatively, the behavior is similar to the one for dimension $N=1$, but at a more technical level the dependence on $N$ of the eigenvalues and eigenvectors will be important in the sequel, thus we provide a full proof.
\begin{lemma}[Local analysis near $P_2$]\label{lem.P2}
The system \eqref{PSsyst1} in a neighborhood of the critical point $P_2$ has a two-dimensional stable manifold fully contained in the invariant plane $\{Z=0\}$ and a one-dimensional unstable manifold. There exists a unique orbit going out of $P_2$ and the profiles contained in this orbit have a local behavior given by \eqref{beh.P2} as $\xi\to0$.
\end{lemma}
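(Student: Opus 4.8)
The plan is to prove that $P_2$ is a \emph{hyperbolic} critical point of \eqref{PSsyst1} with the stated stable/unstable splitting, and then to read off the profile asymptotics \eqref{beh.P2} directly from the value of the coordinate $X$ at $P_2$.

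\medskip

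\textit{Linearization.} Set $n:=mN-N+2=N(m-1)+2$ and $D:=\alpha n$ for brevity, so that $P_2=\left(\frac{m-1}{2D},\frac{1}{D},0\right)$. A direct computation of the partial derivatives of the right-hand side of \eqref{PSsyst1} gives
$$
M(P_2)=\begin{pmatrix}
-\dfrac{m-1}{D} & \dfrac{(m-1)^2}{2D} & 0\\[3mm]
1-\dfrac{N}{D} & -\dfrac{2}{D}-\dfrac{\beta}{\alpha}-\dfrac{N(m-1)}{2D} & -1\\[3mm]
0 & 0 & \lambda_3
\end{pmatrix},\qquad \lambda_3:=(m+p-2)\tfrac{1}{D}+(\sigma-2)\tfrac{m-1}{2D}.
$$
Using the identity $2(m+p-2)+(\sigma-2)(m-1)=\sigma(m-1)+2(p-1)=\frac{\sigma+2}{\alpha}$ (the last equality being the definition of $\alpha$ in \eqref{selfsim.exp}), one gets $\lambda_3=\frac{\sigma+2}{2\alpha^2 n}>0$, since $\sigma>2(1-p)/(m-1)$ forces $\alpha>0$ and $\sigma+2>0$. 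Because the third row of $M(P_2)$ has vanishing $X$- and $Y$-entries, $M(P_2)$ is block upper-triangular, hence its spectrum is $\{\lambda_3\}$ together with the spectrum of the $2\times2$ block $A$ obtained by restricting the linearization to the invariant plane $\{Z=0\}$. The eigenvector of $\lambda_3$ has a non-zero $Z$-component (as $\lambda_3$ is not an eigenvalue of $A$, the $X,Y$-components are uniquely determined by the $Z$-component), so the unstable manifold is one-dimensional and transverse to $\{Z=0\}$; conversely each eigenvector of $A$ has zero $Z$-component, so the stable subspace lies in $\{Z=0\}$, and since $\{Z=0\}$ is invariant the two-dimensional stable manifold is contained in (in fact is a full neighborhood of $P_2$ in) that plane.

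\medskip

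\textit{The reduced block $A$.} It remains to check $\operatorname{tr}A<0$ and $\det A>0$, so that both eigenvalues of $A$ have negative real part. The trace $-\frac{m-1}{D}-\frac{2}{D}-\frac{\beta}{\alpha}-\frac{N(m-1)}{2D}$ is a sum of manifestly negative terms (recall $\beta=\frac{m-p}{\sigma(m-1)+2(p-1)}>0$). For the determinant, factoring out $\frac{m-1}{D}$ and using $\frac{n}{D}=\frac1\alpha$ and $\frac{1+\beta}{\alpha}=\frac{\sigma(m-1)+(m+p-2)}{\sigma+2}$ one obtains, after substituting the explicit values of $\alpha,\beta$ from \eqref{selfsim.exp} and simplifying,
$$
\det A=\frac{m-1}{D}\left[\frac{n}{D}+\frac{\beta}{\alpha}-\frac{m-1}{2}\right]=\frac{m-1}{D}\left[\frac{1+\beta}{\alpha}-\frac{m-1}{2}\right]=\frac{m-1}{2\alpha D}=\frac{m-1}{2\alpha^2 n}>0 .
$$
Hence $P_2$ is hyperbolic, with a two-dimensional stable manifold contained in $\{Z=0\}$ and a one-dimensional unstable manifold transverse to it. Since $P_2$ is a sink within the invariant plane $\{Z=0\}$, no orbit in that plane leaves $P_2$, and of the two branches of the unstable manifold exactly one lies in the physical region $\{Z\ge0\}$; therefore there is a unique orbit going out of $P_2$.

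\medskip

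\textit{Profile asymptotics.} Along this orbit $X(\eta)\to X_2=\frac{m-1}{2D}$ as $\eta\to-\infty$, that is $\frac{m}{\alpha}\xi^{-2}f^{m-1}(\xi)\to \frac{m-1}{2\alpha n}$, which yields $f^{m-1}(\xi)\sim\frac{m-1}{2mn}\,\xi^2$, i.e. precisely \eqref{beh.P2}; the relation $Y(\eta)\to Y_2$ is then automatically consistent. Finally, from $f^{1-m}(\xi)\sim\frac{2mn}{m-1}\xi^{-2}$ and $\frac{d\eta}{d\xi}=\frac{\alpha}{m}\xi f^{1-m}(\xi)$ we get $\frac{d\eta}{d\xi}\sim\mathrm{const}\cdot\xi^{-1}>0$, hence $\eta\sim\mathrm{const}\cdot\ln\xi$, so that $\eta\to-\infty$ on this orbit corresponds to $\xi\to0^+$. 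Thus the profiles contained in the unique orbit leaving $P_2$ behave as in \eqref{beh.P2} near $\xi=0$, which completes the proof.

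\medskip

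The only genuinely computational step, and the one I expect to be the main obstacle, is the verification that $\det A>0$: a priori this expression could have either sign, and it is only after inserting the precise values of $\alpha$ and $\beta$ and using the identity $n/D=1/\alpha$ (which encodes the exact location of $P_2$) that it collapses to the clean positive quantity above. The block-triangular structure of $M(P_2)$, the transversality of the unstable direction, and the transfer of the asymptotics are routine.
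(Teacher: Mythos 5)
Your proposal is correct and follows essentially the same route as the paper: compute the linearization at $P_2$, observe the block structure with $\lambda_3>0$ transverse to $\{Z=0\}$ and the $2\times2$ block having negative trace and determinant $\frac{m-1}{2\alpha^2(mN-N+2)}>0$, deduce the unique outgoing orbit in $\{Z>0\}$, and read \eqref{beh.P2} from $X\to X(P_2)$ as $\xi\to0$. The only cosmetic difference is that the paper also records the explicit eigenvector $e_3(\sigma)$ (needed later in Section \ref{sec.large}), while your logarithmic change-of-variable argument for identifying the limit $\xi\to0$ is a slightly more explicit version of what the paper leaves as "easy to check".
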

\begin{proof}
The linearization of the system \eqref{PSsyst1} near the critical point $P_2$ has the matrix
$$
M(P_2)=\left(
  \begin{array}{ccc}
    -\frac{m-1}{(mN-N+2)\alpha} & \frac{(m-1)^2}{2(mN-N+2)\alpha} & 0 \\
    1-\frac{N}{(mN-N+2)\alpha} & -\frac{2\beta(mN-N+2)+N(m-1)+4}{2(mN-N+2)\alpha} & -1 \\
    0 & 0 & \frac{\sigma(m-1)+2(p-1)}{2(mN-N+2)\alpha} \\
  \end{array}
\right)
$$
with eigenvalues $\lambda_1$, $\lambda_2$ and $\lambda_3$ such that
$$
\lambda_1+\lambda_2=-\frac{(N+2)(m-1)+2(mN-N+2)\beta+4}{2(mN-N+2)\alpha}<0, \ \lambda_1\lambda_2=\frac{m-1}{2(mN-N+2)\alpha^2}>0
$$
and
$$
\lambda_3=\frac{\sigma(m-1)+2(p-1)}{2(mN-N+2)\alpha}>0.
$$
We thus notice that $\lambda_1$ and $\lambda_2$ are either both negative, or they are complex conjugated with negative real parts, while their eigenvectors have the $Z$-component equal to zero. It follows that the stable manifold of $P_2$ is contained in the invariant plane $\{Z=0\}$. There exists an unique orbit going out of $P_2$ into the half-space $\{Z>0\}$ in the phase space, tangent to the eigenvector corresponding to $\lambda_3$, that is
\begin{equation}\label{interm4}
e_3(\sigma)=\left(\frac{2\alpha(m-1)^2(mN-N+1)}{D(\sigma)},\frac{2\alpha(mN-N+2)[(m-1)\sigma+2(m+p-2)]}{D(\sigma)},1\right),
\end{equation}
where
\begin{equation}\label{interm5}
\begin{split}
D(\sigma)&=-(m-1)^2\sigma^2-(m-1)[(m-1)N+2(m+2p-1)]\sigma\\
&-8(m-1)^2N-8[p(m+p-2)+m-1]<0.
\end{split}
\end{equation}
The local behavior \eqref{beh.P2} of the profiles contained in the orbits going out of $P_2$ is obtained from the fact that on these orbits we have
\begin{equation*}
X(\xi)=\frac{m}{\alpha}\frac{f^{m-1}(\xi)}{\xi^2}\sim\frac{m-1}{2(mN-N+2)\alpha},
\end{equation*}
which can only hold true in terms of profiles in the limit $\xi\to0$, as it is easy to check (full details in dimension $N=1$ are given in \cite[Lemma 3.2]{IS20b}).
\end{proof}

\subsection{New critical points when $m+p=2$}\label{subsec.crit}

In the critical case $m+p=2$, the system \eqref{PSsyst1} simplifies a bit in the equation for $\dot{Z}$ and some new critical points appear. We observe that by letting $X=0$, in this case we already have $\dot{Z}=0$ and $\dot{X}=0$, thus it only remains to vanish the right hand side in the second equation, thus finding the \emph{critical parabola}
\begin{equation}\label{crit.par}
-Y^2-\frac{\beta}{\alpha}Y-Z=0, \ \ {\rm that \ is} \ \ P_0^{\lambda}=\left(0,\lambda,-\lambda^2-\frac{\beta}{\alpha}\lambda\right), \ \lambda\in\left[-\frac{\beta}{\alpha},0\right],
\end{equation}
and this critical parabola replaces the critical point $P_1$ and partly the critical point $P_0$. The linearization of the system \eqref{PSsyst1} in a neighborhood of the critical points $P_0^{\lambda}$ has the matrix
$$
M(P_0^{\lambda})=\left(
                   \begin{array}{ccc}
                     (m-1)\lambda & 0 & 0 \\
                     1-N\lambda & -2\lambda-\frac{\beta}{\alpha} & -1 \\
                     -(\sigma-2)(\lambda^2+\frac{\beta}{\alpha}\lambda) & 0 & 0, \\
                   \end{array}
                 \right)
$$
with eigenvalues $l_1=(m-1)\lambda<0$, $l_2=-2\lambda-\frac{\beta}{\alpha}$ and $l_3=0$. We will keep the notation $P_0=P_0^0$ for simplicity. The local analysis of the orbits entering these critical points depends on the sign of the eigenvalue $l_2$ and is identical to the one performed in dimension $N=1$. We thus only state it here and we refer the reader to \cite[Lemma 2.2]{IS21b} for the proof.
\begin{lemma}[Local behavior near $P_0^{\lambda}$]\label{lem.P0l}
Let $m$, $p$ and $\sigma$ as in \eqref{range.exp} with $m+p=2$. We then have
\begin{itemize}
\item For $\lambda\in(-\beta/\alpha,-\beta/2\alpha)$ the system \eqref{PSsyst1} near the critical point $P_0^{\lambda}$ has a one-dimensional center manifold, a one-dimensional unstable manifold and a one-dimensional stable manifold. The center manifold is contained in the critical parabola \eqref{crit.par}, the unstable manifold is contained in the invariant plane $\{X=0\}$ and there is a unique orbit entering $P_0^{\lambda}$ from the half-space $\{X>0\}$.
\item For $\lambda=-\beta/2\alpha$ the system \eqref{PSsyst1} near the critical point $P_0^{\lambda}$ gives us an interesting example of a \emph{center-stable two-dimensional manifold}.
\item For $\lambda\in(-\beta/2\alpha,0)$ the system \eqref{PSsyst1} near the critical point $P_0^{\lambda}$ has a one-dimensional center manifold and a two-dimensional stable manifold. The center manifold is contained in the parabola \eqref{crit.par}.
\end{itemize}
In all the three cases, the orbits entering $P_0^{\lambda}$ on the stable (or center-stable) manifold contain profiles with an interface at some $\xi_0\in(0,\xi_{max}]$.
\end{lemma}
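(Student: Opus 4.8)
The plan is to mirror the finite-critical-point analysis above: linearise \eqref{PSsyst1} at $P_0^{\lambda}$, read the dimensions of the invariant manifolds off the signs of the eigenvalues of $M(P_0^{\lambda})$, and translate the resulting phase-space picture into properties of the profiles $f$ via the definitions \eqref{PSvar1}. From the displayed matrix one gets $l_1=(m-1)\lambda$, $l_2=-2\lambda-\beta/\alpha$ and $l_3=0$, with $l_1<0$ because $m>1$; the vanishing eigenvalue is forced since the critical parabola \eqref{crit.par} is a whole curve of equilibria through $P_0^{\lambda}$, so its tangent vector $(0,1,-2\lambda-\beta/\alpha)$ is automatically a center direction, and the parabola is itself the one-dimensional center manifold when $l_2\neq0$, and a curve inside the two-dimensional center manifold when $l_2=0$. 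The sign of $l_2$ — positive for $\lambda\in(-\beta/\alpha,-\beta/2\alpha)$, zero at $\lambda=-\beta/2\alpha$, negative for $\lambda\in(-\beta/2\alpha,0)$ — produces the trichotomy. A short eigenvector computation gives that the $l_2$-eigenvector is $(0,1,0)$, which lies in the invariant plane $\{X=0\}$, whereas the $l_1$-eigenvector has a nonzero $X$-component; since $X\ge0$, this is what yields both the ``contained in $\{X=0\}$'' claims and the uniqueness of the orbit coming from $\{X>0\}$.

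I would then settle the three regimes. For $\lambda\in(-\beta/\alpha,-\beta/2\alpha)$, $P_0^{\lambda}$ is hyperbolic transversally to the parabola ($l_1<0<l_2$): the stable/unstable manifold theorem gives a one-dimensional $W^{s}$ that leaves $\{X=0\}$, hence a unique physical orbit entering $P_0^{\lambda}$ from $\{X>0\}$, and a one-dimensional $W^{u}\subset\{X=0\}$. For $\lambda\in(-\beta/2\alpha,0)$ one has $l_1,l_2<0$, no unstable direction, a two-dimensional $W^{s}$ which meets $\{X>0\}$ since the $l_1$-eigenvector has nonzero $X$-component, and hence a one-parameter family of orbits on $W^{s}$ entering $P_0^{\lambda}$. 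The delicate regime is $\lambda=-\beta/2\alpha$: there $l_2=l_3=0$ and inspecting $M(P_0^{\lambda})$ shows the geometric multiplicity of the zero eigenvalue is only one, that is, a genuine $2\times2$ Jordan block is present. In that case I would reduce the flow to the two-dimensional center manifold $W^{c}$ (which still contains the parabola) and analyse the resulting planar system through its quadratic part, in the spirit of the reduction carried out for \eqref{interm2}, again invoking Date's classification \cite{Date79}. The expected outcome is a parabolic, saddle-node-type local portrait — the transverse eigenvalue $l_2$ changes sign as the base point moves along the parabola through $P_0^{-\beta/2\alpha}$ — so that the orbits converging to $P_0^{-\beta/2\alpha}$ fill a two-dimensional center-stable manifold, which is the structure claimed in the statement.

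It remains to read ``orbit entering $P_0^{\lambda}$'' as ``profile with an interface''. Along a profile $\eta$ is a strictly increasing function of $\xi$ wherever $f>0$, since $d\eta/d\xi=(\alpha/m)\xi f^{1-m}>0$; hence an orbit converging to $P_0^{\lambda}$ as $\eta\to+\infty$ corresponds to $\xi$ increasing up to some $\xi_0\in(0,+\infty]$, with $\xi_0>0$ because the orbit is non-constant. One rules out $\xi_0=+\infty$ by noting that then $Z=(m/\alpha^2)\xi^{\sigma-2}$ (using $m+p=2$) would be unbounded, contradicting the boundedness of $Z$ at $P_0^{\lambda}$; thus $\xi_0\in(0,\infty)$. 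Now $X\to0$ forces $f(\xi_0)=0$, and since $(f^m)'=\alpha\,\xi\,Yf$ with $Y\to\lambda$ bounded we get $(f^m)'(\xi_0)=0$, while $f>0$ just to the left of $\xi_0$ because $X>0$ there; this is precisely an interface in the sense of Definition \ref{def1}. The quantitative bound $\xi_0\le\xi_{max}$ is the localisation of interface points recorded after \eqref{ximax}, which carries over from \cite{IS21b} to $N\ge2$ unchanged.

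The main obstacle is the degenerate case $\lambda=-\beta/2\alpha$: with a double-zero eigenvalue and a nontrivial Jordan block the linear part gives essentially no information, so one must carry out the center-manifold reduction carefully and compute the relevant quadratic coefficients — or match the reduced planar system to the correct portrait in \cite{Date79} — to confirm that the orbits converging to $P_0^{-\beta/2\alpha}$ genuinely form a two-dimensional center-stable manifold rather than, say, a lower-dimensional family.
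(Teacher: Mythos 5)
Your proposal is correct and takes essentially the same route as the paper, which itself only records the eigenvalues of $M(P_0^{\lambda})$ and defers the details to \cite[Lemma 2.2]{IS21b}: the trichotomy in the sign of $l_2=-2\lambda-\beta/\alpha$, the identification of the critical parabola \eqref{crit.par} as the center manifold, the eigenvector computation placing the $l_2$-direction in $\{X=0\}$ and the $l_1$-direction transversal to it, and the translation of $X\to0$, $Y\to\lambda$, $Z=m\xi^{\sigma-2}/\alpha^2$ bounded into an interface at some $\xi_0\in(0,\xi_{max}]$ all match. One caution on the degenerate case $\lambda=-\beta/2\alpha$, which you rightly single out: the linear part of the reduced flow on the two-dimensional center manifold is nilpotent but nonzero (a genuine Jordan block), so Date's classification \cite{Date79} -- which concerns systems whose lowest-order part is a homogeneous quadratic with vanishing linear part -- is not the applicable tool there; a Takens/Bogdanov-type normal-form analysis of the nilpotent-plus-quadratic reduced system is what is needed to confirm that the orbits converging to $P_0^{-\beta/2\alpha}$ form a two-dimensional center-stable manifold.
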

Recall that for $m+p=2$, the two types of interface behavior coincide, thus it no longer makes sense to talk about Type I or Type II.

\medskip

\noindent \textbf{Remark 1}. We notice that, when $m+p=2$, we have $\dot{Z}=(\sigma-2)XZ>0$, thus the component $Z$ is non-decreasing on any orbit of the system. Thus, the maximum value for $Z$ allowed for profiles with interface is attained at the peak of the parabola \eqref{crit.par}, namely
$$
Z=\frac{\beta^2}{4\alpha^2},
$$
which leads to $\xi=\xi_{max}$, taking into account that when $m+p=2$, the definition of $Z$ in \eqref{PSvar1} transforms into $Z=m\xi^{\sigma-2}/\alpha^2$. This proves that no profiles with interface at a point $\xi_0>\xi_{max}$ may exist.

\medskip

\noindent \textbf{Remark 2}. The critical point $P_0$ becomes now one of the endpoints of the parabola \eqref{crit.par} and there are no longer interesting orbits entering it, as it readily follows from the same monotonicity in $Z$ of the orbits. On the other hand, from the ancient elliptic sector in Lemma \ref{lem.P0}, it maintains the orbits going out of it, with the local behavior \eqref{beh.P0}, as it can be very easily seen from the proof of Lemma \ref{lem.P0}. Indeed, when $m+p=2$, the flow on the center manifold is given by the reduced system
\begin{equation}\label{interm2bis}
\left\{\begin{array}{ll}\dot{X}&=\frac{1}{\beta}X[X-(m-1)\alpha Z]+O(|(X,Z)|^3),\\
\dot{Z}&=\frac{2}{\beta}ZX+O(|(X,Z)|^3),\end{array}\right.
\end{equation}
which can be directly integrated to give that locally its orbits are tangent to the explicit family
$$
X=K\sqrt{Z}-(m-1)\alpha Z, \qquad K\in\real,
$$
leading to the expected behavior \eqref{beh.P0}. However, we have a result which is in some sense analogous for the case $m+p=2$ to the elliptic sector near $P_0$ in the former case $m+p>2$.
\begin{lemma}\label{lem.P0mp2}
Let $m$, $p$ and $\sigma$ as in \eqref{range.exp} and such that $m+p=2$. Then there exist orbits going out of $P_0$ and entering one of the critical points $P_0^{\lambda}$ for some $\lambda\in[-\beta/2\alpha,0)$.
\end{lemma}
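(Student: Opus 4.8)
The statement is the $m+p=2$ analogue of the elliptic-sector mechanism of Lemma \ref{lem.P0}: we must produce a global connecting orbit from $P_0$ (the endpoint $\lambda=0$ of the critical parabola) to some interior point $P_0^{\lambda}$ with $\lambda\in[-\beta/2\alpha,0)$. The first step is to recall, from Remark 2 above, that when $m+p=2$ the flow on the center manifold of $P_0$ reduces to the planar system \eqref{interm2bis}, whose orbits are locally tangent to the explicit family $X=K\sqrt{Z}-(m-1)\alpha Z$. Only the orbits with $K>0$ leave $P_0$ into the region $\{X>0,\ Z>0\}$; fixing one such orbit determines a one-parameter family of trajectories of the full system \eqref{PSsyst1} emanating from $P_0$, and the goal is to track any one of them until it reaches the critical parabola \eqref{crit.par}.

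The second step is to confine the orbit to a bounded invariant region and exploit monotonicity. By Remark 1, along any orbit with $X>0$ we have $\dot Z=(\sigma-2)XZ>0$, so $Z$ is strictly increasing; and since the orbit leaves $P_0$ with $X>0$, $X$ stays positive by invariance of $\{X=0\}$. I would next show the relevant region — say $\{X>0,\ Z>0,\ -\beta/\alpha<Y<0,\ Y^2+(\beta/\alpha)Y+Z\le 0\}$, i.e. the solid region "under" the critical parabola in the $(Y,Z)$ variables — is positively invariant: one checks the sign of the flow on each bounding face (on $\{Z=-Y^2-(\beta/\alpha)Y\}$, i.e. $T=(\beta/\alpha)Y-X+Z$ restricted appropriately, the vector field points inward; on $\{Y=0\}$ and $\{Y=-\beta/\alpha\}$ the $Y$-component is controlled by the quadratic $-Y^2-(\beta/\alpha)Y$ together with the sign of $X-Z$). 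Since $Z$ increases and is bounded above by $\beta^2/4\alpha^2$ (the peak of the parabola), the orbit must have $\omega$-limit set nonempty and contained in this compact region.

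The third step is to identify the $\omega$-limit set as a single critical point $P_0^{\lambda}$ with $\lambda\in[-\beta/2\alpha,0)$. Because $Z$ is strictly monotone, the orbit cannot be periodic, and the $\omega$-limit set, being a compact connected invariant set on which $Z$ is constant, must lie in the slice $\{Z=Z_\infty\}$; combined with the gradient-like behaviour one shows it reduces to equilibria. The only equilibria in the closure of the invariant region are the points of the critical parabola, so the orbit limits onto some $P_0^{\lambda}$; the constraint $\lambda\in[-\beta/2\alpha,0)$ then follows from Lemma \ref{lem.P0l}, since for $\lambda\in(-\beta/\alpha,-\beta/2\alpha)$ there is a \emph{unique} orbit entering $P_0^{\lambda}$ from $\{X>0\}$ on its one-dimensional stable manifold, and a dimension/transversality count (as in \cite[Lemma 2.2]{IS21b}) shows that orbit does not come from $P_0$ along the center manifold, whereas for $\lambda\in[-\beta/2\alpha,0)$ the stable (or center-stable) manifold is two-dimensional and can absorb a one-parameter family of incoming orbits. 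One then matches $K$ in \eqref{interm2bis} to the target $\lambda$ to conclude existence.

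I expect the main obstacle to be the construction and verification of the positively invariant bounded region together with the exclusion of more complicated $\omega$-limit behaviour (closed orbits or homoclinic loops): the $Y$-equation mixes the sign-changing variable $Y$ with $X$ and $Z$, so the inward-pointing check on the parabolic face is delicate, and one must use the strict monotonicity of $Z$ crucially to kill periodic orbits — this is precisely the place where the case $m+p=2$ differs from $m+p>2$ and where the argument is genuinely two-dimensional rather than a perturbation of the homogeneous system of Date. The remaining matching of the center-manifold parameter $K$ to $\lambda$ is then routine, following the one-dimensional computations in \cite[Section 2]{IS21b}.
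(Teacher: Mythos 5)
Your overall strategy (confine the orbit leaving $P_0$ to a compact region, use the strict monotonicity of $Z$ to kill periodic orbits, and identify the $\omega$-limit as a point of the critical parabola) is reasonable in outline, but the step you yourself flag as ``delicate'' --- the inward-pointing check on the parabolic face of your region --- actually fails, and this is a genuine gap. The computation already carried out in Lemma \ref{lem.cyl} shows that on the cylinder \eqref{cyl} the quantity $\overline{n}\cdot(\dot X,\dot Y,\dot Z)=Xh(Y)$ is \emph{negative} for $Y\in[-\beta/2\alpha,0)$, with $\overline{n}=(0,-2Y-\beta/\alpha,-1)$ pointing \emph{into} the region under the parabola on that branch; hence trajectories cross the right branch of the cylinder from the inside to the outside, and your region $\{Y^2+(\beta/\alpha)Y+Z\le 0\}$ is not positively invariant. (Indeed Lemma \ref{lem.cyl} is used in the paper precisely in the opposite direction: as an obstacle preventing re-entry from outside, not as a trapping boundary for orbits inside.) A second, related problem is that the orbit does not start in your region at all: on the center manifold $Y=(\alpha/\beta)(X-Z)+O(|(X,Z)|^2)$ and the outgoing orbits satisfy $X\sim K\sqrt{Z}\gg Z$, so they leave $P_0$ into the half-space $\{Y>0\}$ (consistently with the increasing local behavior \eqref{beh.P0}), whereas your region requires $-\beta/\alpha<Y<0$. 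You would therefore still have to control the excursion through $\{Y>0\}$ and the crossing of $\{Y=0\}$ (which can only happen where $X<Z$), and nothing in your argument does this.

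The paper's proof avoids both issues by a different confinement: it takes the plane $aX+Z=c$ with $a=3/((m-1)\alpha)$ and $c>0$ small as an upper barrier in $Z$ for the flow in a neighborhood of the center manifold $T(X,Z)$ (the computation is dimension-independent because the $\dot X$ and $\dot Z$ equations do not involve $N$, and is carried out in \cite[Proposition 4.5]{IS21b}). This keeps $Z$ uniformly small along the whole outgoing orbit; since $Z$ is nondecreasing and the only admissible limit points with small $Z$ reachable from $\{X>0\}$ on a two-dimensional stable manifold are the $P_0^{\lambda}$ with $\lambda$ close to $0$, the conclusion follows. Your final matching of the center-manifold parameter $K$ to $\lambda$ via the dimension count in Lemma \ref{lem.P0l} is a sensible endgame, but it cannot be reached until the confinement step is replaced by one that actually works.
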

\begin{proof}
The proof is totally similar to the one of \cite[Proposition 4.5]{IS21b}, by considering the plane
$$
aX+Z=c, \qquad a=\frac{3}{(m-1)\alpha}, \ c>0 \ {\rm small}
$$
as an upper barrier (in $Z$) for the flow of the system \eqref{PSsyst1} in a neighborhood of the center manifold $T(X,Z)$. Since the equations for $\dot{X}$, respectively $\dot{Z}$ in the system \eqref{PSsyst1} do not depend on $N$, the calculations are identical to the ones performed in the proof of \cite[Proposition 4.5]{IS21b} and we omit them here.
\end{proof}

We end this section with the following stability result that follows from purely theoretical considerations but it is very important in the sequel. It says that the points of the first half of the critical parabola \eqref{crit.par} act together as a "big attractor". This is proved in detail as \cite[Proposition 2.4]{IS21b}, thus we will only state it here.
\begin{proposition}\label{prop.att}
Let $m$, $p$ and $\sigma$ as in \eqref{range.exp} and such that $m+p=2$. The set of points $\mathcal{S}=\{P_0^{\lambda}: -\beta/2\alpha<\lambda<0\}$ is an asymptotically stable set for the system \eqref{PSsyst1}. Moreover, fixing $\lambda_{m}$ and $\lambda_M$ such that
$$
-\frac{\beta}{2\alpha}<\lambda_m<\lambda_M<0,
$$
then the set $\{P_0^{\lambda}: \lambda_m<\lambda<\lambda_M\}$ is an asymptotically stable set for the system \eqref{PSsyst1}.
\end{proposition}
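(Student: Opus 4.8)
The plan is to exploit the fact that, for $\lambda\in(-\beta/2\alpha,0)$, the critical parabola \eqref{crit.par} restricted to closed sub-arcs of $\mathcal{S}$ is a compact \emph{normally attracting} curve of equilibria, and to build around it an explicit positively invariant trapping neighbourhood, following the scheme of \cite[Proposition 2.4]{IS21b}. First I would record the linear picture at $P_0^{\lambda}$: the matrix $M(P_0^{\lambda})$ has eigenvalues $l_1=(m-1)\lambda$, $l_2=-2\lambda-\beta/\alpha$, $l_3=0$, the kernel direction $(0,1,-2\lambda-\beta/\alpha)$ being exactly the tangent vector $\partial_{\lambda}P_0^{\lambda}$ to the parabola; for $\lambda\in(-\beta/2\alpha,0)$ one has $l_1<0$ and $l_2<0$, so along $\mathcal{S}$ the only neutral direction is the unavoidable tangential one, while the two transverse eigenvalues are strictly negative, uniformly on each closed sub-arc $\{P_0^{\lambda}:\lambda_m\le\lambda\le\lambda_M\}$. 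I would then establish the statement for such sub-arcs and recover the two displayed open-arc assertions by a routine exhaustion argument.

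For \textbf{attractivity}, in a small neighbourhood of a closed sub-arc one has $Y<0$, hence $\dot X=X[(m-1)Y-2X]\le cX$ with $c:=(m-1)(\lambda_M+\varepsilon)<0$; thus $X(\eta)\to0$ exponentially and $\int_0^\infty X\,d\eta\le X(0)/|c|<\infty$. Since $m+p=2$ yields $\dot Z=(\sigma-2)XZ$, integrating gives $Z(\eta)=Z(0)\exp\!\big((\sigma-2)\!\int_0^\eta X\big)\to Z_\infty<\infty$, with $Z_\infty$ arbitrarily close to $Z(0)$ when $X(0)$ is small. Consequently the $\omega$-limit set lies in $\{X=0\}\cap\{Z=Z_\infty\}$, on which the reduced flow $\dot Y=-(Y-\lambda_+(Z_\infty))(Y-\lambda_-(Z_\infty))$, with $\lambda_\pm(Z):=\tfrac12\big(-\beta/\alpha\pm\sqrt{\beta^2/\alpha^2-4Z}\big)$, drives $Y\to\lambda_+(Z_\infty)$ as long as $Y$ remains above $\lambda_-$; the orbit then converges to $P_0^{\lambda_\infty}$ with $\lambda_\infty=\lambda_+(Z_\infty)\in(-\beta/2\alpha,0)$.

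For \textbf{Lyapunov stability} --- which also legitimizes the assumption that $Y$ stays above $\lambda_-$ --- I would construct the positively invariant tube
\[
\mathcal{T}=\big\{(X,Y,Z):\ 0\le X\le\delta_1,\ |Y-\lambda_+(Z)|\le\delta_3,\ |Z-Z_0|\le\delta_2\big\},\qquad \delta_1\ll\delta_3,
\]
with $\delta_2$ fixed a posteriori. On the face $X=\delta_1$ the flow points inward since $\dot X<0$. On the faces $Y=\lambda_+(Z)\pm\delta_3$, using $\dot Y=-(Y-\lambda_+(Z))(Y-\lambda_-(Z))+X(1-NY)$ and $\dot Z=O(\delta_1)$, the quantity $\tfrac{d}{d\eta}\big(Y-\lambda_+(Z)\big)$ has the sign of $\mp\,\delta_3\big(\lambda_+(Z)-\lambda_-(Z)\big)+O(\delta_1)$; since the gap $\lambda_+-\lambda_-$ is bounded below on the compact sub-arc, taking $\delta_3$ below that gap and then $\delta_1$ small relative to $\delta_3$ makes these faces crossed inward. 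The constraint on $Z$ is not enforced by transversality but follows a posteriori from $\int_0^\infty X\,d\eta\le\delta_1/|c|$ (valid while the orbit remains in $\{Y<0\}$) via a standard continuation argument, taking $\delta_2$ of order $\delta_1$. Starting inside a tube slightly narrower than $\mathcal{T}$ then forces $\lambda_\infty$ into $(\lambda_m,\lambda_M)$, and combining with the previous paragraph gives asymptotic stability of the open sub-arc, hence of $\mathcal{S}$.

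The main obstacle is the stability step, and within it the fact that the $Y$-barrier is uniform only away from $\lambda=-\beta/2\alpha$: as $\lambda\to-\beta/2\alpha$ the roots $\lambda_\pm(Z)$ coalesce, both the spectral gap $|l_2|$ and the geometric gap $\lambda_+-\lambda_-$ degenerate, and the admissible $\delta_3$ (hence $\delta_1$, $\delta_2$) collapses to zero. This is precisely why the proposition is phrased first for closed sub-arcs bounded away from the endpoints $-\beta/2\alpha$ and $0$, and only then for $\mathcal{S}$ as a union of such arcs. One could alternatively bypass the explicit construction by invoking the classical persistence/reduction theorems for normally hyperbolic invariant manifolds (Fenichel, Pliss), each sub-arc being a compact attracting normally hyperbolic curve of equilibria; but the barrier argument is elementary and self-contained, and is the one carried out in \cite[Proposition 2.4]{IS21b}.
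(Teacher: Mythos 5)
Your argument is correct, and it is worth noting at the outset that the paper itself contains no proof of this proposition: it is stated and immediately deferred to \cite[Proposition 2.4]{IS21b}, with only the remark that the result ``follows from purely theoretical considerations''. Your write-up therefore supplies a self-contained proof in the spirit of what the paper attributes to that reference: the key observations -- that the zero eigenvector $(0,1,-2\lambda-\beta/\alpha)$ of $M(P_0^{\lambda})$ is precisely the tangent to the parabola \eqref{crit.par}, so that compact sub-arcs of $\mathcal{S}$ are normally attracting curves of equilibria with transverse spectrum $\{(m-1)\lambda,\,-2\lambda-\beta/\alpha\}$ uniformly negative away from the endpoints -- are exactly the right ones, and the explicit trapping tube makes the normal hyperbolicity quantitative without appealing to Fenichel-type theorems. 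The computations check out: $\dot Y=-(Y-\lambda_+(Z))(Y-\lambda_-(Z))+X(1-NY)$ is the correct rewriting of the second equation of \eqref{PSsyst1}, the monotonicity and integrability of $X$ give $Z\to Z_\infty$ with $Z_\infty-Z(0)=O(\delta_1)$, and the continuation argument for the $Z$-faces is sound because $\dot Z\geq0$ and $Z$ can only grow by the factor $\exp((\sigma-2)\int X)$. Two small points you should make explicit in a final version: (i) the tube must be taken around the whole closed sub-arc, i.e.\ with $Z$ ranging over $[Z(\lambda_M)-\delta_2,\,Z(\lambda_m)+\delta_2]$ (recall $Z(\lambda)$ is decreasing in $\lambda$ on this branch) rather than around a single $Z_0$; and (ii) the conclusion that the orbit converges to the single point $P_0^{\lambda_+(Z_\infty)}$, rather than merely accumulating on a segment of the invariant line $\{X=0,\ Z=Z_\infty\}$, should be justified by noting that the only invariant subset of the band $|Y-\lambda_+(Z_\infty)|\leq\delta_3<\lambda_+-\lambda_-$ under the one-dimensional flow $\dot Y=-(Y-\lambda_+)(Y-\lambda_-)$ is the equilibrium itself. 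Neither point affects the validity of the proof.
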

%\begin{proof}[Sketch of the proof]
%Following standard center manifold theory \cite[Theorem 1, Section 2.10]{Pe} and \cite[Theorem 2.15, Chapter 9]{CH}, we infer that any one-dimensional center manifold of the points in $\mathcal{S}$ has to contain at least an arc of the critical parabola \eqref{crit.par}, since this one is an invariant set. Thus, the center manifold is contained in \eqref{crit.par}. Next, Lemma \ref{lem.P0l} gives that any point in $\mathcal{S}$ has a two-dimensional stable manifold, thus it follows from another general theoretical result concerning stability theory \cite[Theorem 2.1.2, Chapter 2.1]{Wig} that every point $P_0^{\lambda}\in\mathcal{S}$ is a stable point for the system \eqref{PSsyst1}. Given $\lambda_m<\lambda_M$ as in the statement, and letting any compact interval $[\lambda_1,\lambda_2]\subset(\lambda_m,\lambda_M)$, we cover this compact interval with small "balls of stability" near every point $P_0^{\lambda}$, $\lambda\in[\lambda_1,\lambda_2]$ and we conclude the stability of the whole set (and then the asymptotic stability, which is immediate as there are no other critical points, nor limit cycles in the neighborhood of $\mathcal{S}$) by extracting a finite covering with such balls. The full details are given in the proof of \cite[Proposition 2.4]{IS21b}.
%\end{proof}

\section{Local analysis at infinity}\label{sec.inf}

In this second preparatory section, we perform a study of the orbits in a neighborhood of the critical points at the infinity of the phase space. This analysis will introduce some significant differences with respect to dimension $N=1$. As usual when studying the infinity of a dynamical system, we pass to the Poincar\'e hypersphere according to the theory in \cite[Section 3.10]{Pe} introducing the new variables $(\overline{X},\overline{Y},\overline{Z},W)$ by
$$
X=\frac{\overline{X}}{W}, \ Y=\frac{\overline{Y}}{W}, \ Z=\frac{\overline{Z}}{W}
$$
and we obtain by standard theory \cite[Theorem 4, Section 3.10]{Pe} that the critical points at space infinity lie on the equator of the Poincar\'e hypersphere, hence at points
$(\overline{X},\overline{Y},\overline{Z},0)$ where $\overline{X}^2+\overline{Y}^2+\overline{Z}^2=1$ and the following system is fulfilled:
\begin{equation}\label{Poincare}
\left\{\begin{array}{ll}\overline{X}\overline{Y}((N-2)\overline{X}+m\overline{Y})=0,\\
\overline{X}\overline{Z}[\sigma\overline{X}-(1-p)\overline{Y}]=0,\\
\overline{Z}\overline{Y}[(m+p-1)\overline{Y}+(\sigma+N-2)\overline{X}]=0,\end{array}\right.
\end{equation}
Taking into account that we are considering only points with coordinates $\overline{X}\geq0$ and $\overline{Z}\geq0$, we find the following critical points at infinity (on the Poincar\'e hypersphere):
\begin{equation*}
\begin{split}
&Q_1=(1,0,0,0), \ \ Q_{2,3}=(0,\pm1,0,0), \ \ Q_4=(0,0,1,0), \\
&Q_5=\left(\frac{m}{\sqrt{(N-2)^2+m^2}},-\frac{N-2}{\sqrt{(N-2)^2+m^2}},0,0\right),
\end{split}
\end{equation*}
whose expression (noticeably, of the point $Q_5$) already shows that there is a kind of bifurcation at $N=2$, when $Q_5$ coincides with $Q_1$. We thus split the analysis with respect to the dimension.

\subsection{Local analysis at infinity in dimension $N\geq3$}\label{subsec.inf3}

As noticed above, the most important points for the behavior of the system at infinity are $Q_1$ and $Q_5$. In order to perform the local analysis near them, we use the topologically equivalent system given in part (a) of \cite[Theorem 5, Section 3.10]{Pe} which realizes a projection on the $X$ variable and thus translates these critical points into the finite part of the new phase space. This system becomes in our case
\begin{equation}\label{systinf1}
\left\{\begin{array}{ll}\dot{y}=-(N-2)y+w-my^2-\frac{\beta}{\alpha}yw-zw,\\
\dot{z}=\sigma z-(1-p)yz,\\
\dot{w}=2w-(m-1)yw,\end{array}\right.
\end{equation}
where
\begin{equation}\label{interm6}
y=\frac{Y}{X}, \qquad z=\frac{Z}{X}, \qquad w=\frac{1}{X},
\end{equation}
and the minus sign has been chosen in the general framework of \cite[Theorem 5, Section 3.10]{Pe} in order to match the direction of the flow, a fact that is noticed from the equation for $\dot{X}$ in the original system \eqref{PSsyst1}, leading to $\dot{X}<0$ in a neighborhood of $Q_1$, since $|X/Y|\to+\infty$ near this point. We thus readily see that $Q_1$ is mapped into the critical point $(y,z,w)=(0,0,0)$ and $Q_5$ into the critical point $(y,z,w)=(-(N-2)/m,0,0)$ of the system \eqref{systinf1}. We analyze now these points in detail.
\begin{lemma}[Local analysis near $Q_1$]\label{lem.Q1}
Let $N\geq3$. The system \eqref{systinf1} in a neihghborhood of $Q_1$ has a two-dimensional unstable manifold and a one-dimensional stable manifold. The orbits going out on the unstable manifold contain profiles with the local behavior \eqref{beh.Q1} as $\xi\to0$.
\end{lemma}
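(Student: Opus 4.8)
The plan is to treat $Q_1$ as the origin of the chart system \eqref{systinf1}, carry out its local analysis there, and then translate the dynamical picture back to the profile $f$ via the substitutions \eqref{PSvar1} and \eqref{interm6}, using \eqref{SSODE} for the fine asymptotics. First I would linearize \eqref{systinf1} at $(y,z,w)=(0,0,0)$. The Jacobian is the upper triangular matrix
$$
\begin{pmatrix}-(N-2) & 0 & 1\\ 0 & \sigma & 0\\ 0 & 0 & 2\end{pmatrix},
$$
with eigenvalues $-(N-2)$, $\sigma$ and $2$ and eigenvectors $(1,0,0)$, $(0,1,0)$ and $(1,0,N)$ (the last two in the generic case $\sigma\neq2$; if $\sigma=2$ the two unstable directions span a two-dimensional eigenspace and nothing below changes). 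For $N\geq3$ we have $-(N-2)\leq-1<0$, while $\sigma>2(1-p)/(m-1)>0$ by \eqref{range.exp} and $2>0$; hence $Q_1$ is hyperbolic and the stable manifold theorem \cite{Pe} yields a one-dimensional stable manifold (tangent to the $y$-axis) together with a two-dimensional unstable manifold. This is exactly where the hypothesis $N\geq3$ is used: for $N=2$ the eigenvalue $-(N-2)$ vanishes --- the transcritical bifurcation alluded to above --- and for $N=1$ it is positive.

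Next I would identify the profiles carried by the unstable manifold. Along an orbit leaving $Q_1$ one has $(y,z,w)\to(0,0,0)$, and since $w=1/X$, $y=Y/X=\xi f'(\xi)/f(\xi)$ and $z=Z/X=\alpha^{-1}\xi^{\sigma}f^{p-1}(\xi)$ by \eqref{interm6} and \eqref{PSvar1}, this means
$$
\xi^{-2}f^{m-1}(\xi)\to+\infty,\qquad \frac{\xi f'(\xi)}{f(\xi)}\to0,\qquad \xi^{\sigma}f^{p-1}(\xi)\to0 .
$$
The first limit can hold only as $\xi\to0$: if $\xi$ tended to a finite $\xi_*>0$, then $\xi f'/f\to0$ would give $(\log f)'\to0$ as $\xi\to\xi_*$, so $\log f$ (being Lipschitz near $\xi_*$) would converge there and $f$ would tend to a finite positive value, contradicting $\xi^{-2}f^{m-1}\to+\infty$; the possibility $\xi\to\infty$ is excluded in the same spirit from the second limit. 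The degenerate alternatives at $\xi=0$ are ruled out as in \cite[Lemma 3.2]{IS20b}: the three admissible behaviors \eqref{beh.Q1}, \eqref{beh.P2} and \eqref{beh.P0} correspond respectively to $X$ tending to $+\infty$, to a positive constant, and to $0$ as $\xi\to0$, so the constraint $X\to+\infty$ leaves only \eqref{beh.Q1}, in particular $f(0)\in(0,\infty)$.

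It remains to recover the precise expansion. With $\xi\to0$, $\xi f'=o(f)$ and $\xi^{\sigma}f^{p}=o(f)$ along the orbit, the convection and reaction terms in \eqref{SSODE} become negligible and the equation reduces to
$$
(f^m)''(\xi)+\frac{N-1}{\xi}(f^m)'(\xi)=\alpha f(\xi)\bigl(1+o(1)\bigr),\qquad\xi\to0 .
$$
Setting $g=f^m$, this is $(\xi^{N-1}g')'=\alpha\,\xi^{N-1}g^{1/m}(1+o(1))$; integrating twice on $(0,\xi)$ (the boundary term vanishes since $\xi^{N-1}g'\to0$, which follows from $\xi f'/f\to0$) gives $g_0:=\lim_{\xi\to0}g(\xi)\in(0,\infty)$ and
$$
f^m(\xi)=g_0+\frac{\alpha\,g_0^{1/m}}{2N}\,\xi^2+o(\xi^2),\qquad f^{m-1}(\xi)=C+\frac{\alpha(m-1)}{2mN}\,\xi^2+o(\xi^2),
$$
with $C:=g_0^{(m-1)/m}>0$; the second expansion is exactly \eqref{beh.Q1}. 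The factor $1/N$ in the coefficient is produced by the double integration of the radial Laplacian and is the only new feature compared with the one-dimensional computation in \cite{IS20b}. Since every orbit of the two-dimensional unstable manifold (a one-parameter family of orbits) satisfies the limits above, all of them carry profiles with the local behavior \eqref{beh.Q1}, the free constant $C$ running over this family.

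The step I expect to be the main obstacle is neither the linear algebra of the linearization nor the elementary integration in the last paragraph, but the translation in between: passing from the soft phase-space convergence $(y,z,w)\to(0,0,0)$ to the genuinely pointwise conclusions that $\xi\to0$ and that $f(0)$ is a finite, strictly positive constant --- rather than $f$ degenerating to $0$ or blowing up. This cannot be read off from the coordinate changes alone and relies on equation \eqref{SSODE}, exactly as in the one-dimensional treatment \cite{IS20b}.
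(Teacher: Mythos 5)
Your proposal is correct, and its first half (linearization of \eqref{systinf1} at the origin, the eigenvalues $2-N$, $\sigma$, $2$, and the resulting $2$-$1$ splitting of unstable/stable manifolds for $N\geq3$) coincides with the paper's argument. Where you genuinely diverge is in extracting the expansion \eqref{beh.Q1}: the paper stays inside the phase space, integrating the linearized relations $dz/dw\sim \sigma z/2w$ and $dy/dw\sim -(N-2)y/2w+1/2$ to get $z\sim Cw^{\sigma/2}$ and $y\sim w/N$; the first relation forces $f\to C>0$ directly (since $p-1+(m-1)\sigma/2>0$ under \eqref{range.exp}), and the second gives $Y\to 1/N$, i.e.\ $\frac{m}{(m-1)\alpha}\xi^{-1}(f^{m-1})'\sim 1/N$, which yields \eqref{beh.Q1} after a single integration. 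You instead return to \eqref{SSODE}, show the convection and reaction terms are negligible against $\alpha f$, and integrate the radial operator twice. Both routes land on the same coefficient $\alpha(m-1)/2mN$, and your observation that the $1/N$ comes from the double integration of $(\xi^{N-1}g')'$ is exactly the $Y\to1/N$ statement in disguise. The one soft spot in your version is the step $f(0)\in(0,\infty)$: appealing to the trichotomy \eqref{beh.Q1}/\eqref{beh.P2}/\eqref{beh.P0} is mildly circular, since that trichotomy is itself an output of the full phase-space analysis, and the soft limits $y\to0$, $z\to0$, $w\to0$ alone admit, e.g., $f(\xi)\sim(-\log\xi)^{-1}\to0$. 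Your ODE integration does close this (monotonicity of $\xi^{N-1}(f^m)'$ forces its limit at $0^+$ to vanish, and then $f\to0$ would give $f\lesssim\xi^{2/(m-1)}$, contradicting $X\to\infty$), but you should say so explicitly rather than cite the trichotomy; the paper's relation $z\sim Cw^{\sigma/2}$ disposes of this point in one line and is the cleaner device here.
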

\begin{proof}
The linerization of the system \eqref{systinf1} in a neighborhood of $(y,z,w)=(0,0,0)$ has the matrix
$$
M(Q_1)=\left(
         \begin{array}{ccc}
           2-N & 0 & 1 \\
           0 & \sigma & 0 \\
           0 & 0 & 2 \\
         \end{array}
       \right),
$$
with two positive eigenvalues and one negative eigenvalue. The stable manifold is contained in the plane $\{w=0\}$, which is not interesting as it corresponds to $X=\infty$. The orbits going out on the unstable manifold satisfy locally that $dz/dw\sim\sigma z/2w$, which after integration reads $z\sim Cw^{\sigma/2}$, respectively $Z\sim CX^{1-\sigma/2}$, $C>0$. On the one hand, this leads to the local behavior given by
$$
\frac{m}{\alpha^2}\xi^{\sigma-2}f(\xi)^{m+p-2}\sim C\left(\frac{m}{\alpha}\right)^{(2-\sigma)/2}\xi^{\sigma-2}f(\xi)^{(m-1)(2-\sigma)/2}
$$
which implies $f(\xi)\sim C$ for some $C>0$, and this asymptotic behavior is taken as $\xi\to0$, since $X\to\infty$ at $Q_1$. On the other hand, we want to be even more precise and we find that in a neighborhood of $Q_1$ the following asymptotic approximation holds true:
$$
\frac{dy}{dw}\sim-\frac{N-2}{2}\frac{y}{w}+\frac{1}{2},
$$
which by integration leads to
$$
y\sim\frac{w}{N}+Kw^{(2-N)/2}, \qquad {\rm with} \ K=0,
$$
since we want to pass through the point $(y,w)=(0,0)$ and $(2-N)/2<0$. Thus $y\sim w/N$, hence $Y=y/w\sim 1/N$. This local approximation will be very important in the proofs in Section \ref{sec.large}, and we also deduce from this approximation that
$$
\frac{m}{(m-1)\alpha}\xi^{-1}(f^{m-1})'(\xi)\sim\frac{1}{N}, \qquad {\rm as} \ \xi\to0,
$$
which gives after an integration the local behavior \eqref{beh.Q1} as $\xi\to0$, as claimed.
\end{proof}
\begin{lemma}[Local analysis near $Q_5$]
Let $N\geq3$. The critical point $Q_5$ is an unstable node. The orbits going out of it into the finite part of the phase space contain profiles presenting a vertical asymptote at $\xi=0$, with the local behavior
\begin{equation}\label{beh.Q5}
f(\xi)\sim D\xi^{-(N-2)/m}, \qquad {\rm as} \ \xi\to0, \qquad D>0.
\end{equation}
\end{lemma}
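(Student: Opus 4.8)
The plan is to linearise the topologically equivalent system \eqref{systinf1} at the point corresponding to $Q_5$, read off the eigenvalues to identify the local dynamics, and then convert the resulting convergence of the orbits into the asymptotics of $f$ near $\xi=0$.

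First I would record that, under the projection \eqref{interm6}, the critical point $Q_5$ is carried to $(y,z,w)=(-(N-2)/m,0,0)$ of \eqref{systinf1}. Differentiating the right-hand sides of \eqref{systinf1} there produces the Jacobian
$$
M(Q_5)=\left(\begin{array}{ccc}
N-2 & 0 & 1+\dfrac{\beta(N-2)}{\alpha m}\\
0 & \sigma+\dfrac{(1-p)(N-2)}{m} & 0\\
0 & 0 & 2+\dfrac{(m-1)(N-2)}{m}
\end{array}\right),
$$
which is upper triangular, so its spectrum is the set of diagonal entries (the off-diagonal entry is immaterial here). For $N\geq3$ and $m$, $p$, $\sigma$ in the range \eqref{range.exp} all three eigenvalues
$$
\mu_1=N-2,\qquad \mu_2=\sigma+\frac{(1-p)(N-2)}{m},\qquad \mu_3=2+\frac{(m-1)(N-2)}{m}
$$
are strictly positive, so $Q_5$ is a source, that is an unstable node, and every orbit close to $Q_5$ emanates from it into the finite part of the phase space.

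Next I would translate the orbits into profiles. On any orbit leaving $Q_5$ into the region $\{w>0\}$ we have $(y,z,w)\to(-(N-2)/m,0,0)$; combining \eqref{PSvar1} with \eqref{interm6} this reads
$$
y=\frac{Y}{X}=\xi\,\frac{f'(\xi)}{f(\xi)}\longrightarrow-\frac{N-2}{m}.
$$
The constraint $w=1/X\to0$ forces $X=\tfrac{m}{\alpha}\xi^{-2}f^{m-1}(\xi)\to\infty$; since $\dot X=X[(m-1)Y-2X]<0$ in a neighbourhood of $Q_5$ (as $Y<0$ there) while $\eta$ increases with $\xi$, the variable $X$ is decreasing in $\xi$, so this limit is attained as $\xi\to0$. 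Rewriting the relation above as $(\log f)'(\xi)\sim-\tfrac{N-2}{m}\,\xi^{-1}$ as $\xi\to0$ and integrating yields $f(\xi)\sim D\,\xi^{-(N-2)/m}$ with $D>0$, which is \eqref{beh.Q5}; in particular $f$ has a vertical asymptote at $\xi=0$ since $N\geq3$. An a posteriori consistency check closes the picture: for this $f$ one computes $X\sim c\,\xi^{-2-(m-1)(N-2)/m}\to\infty$, $z=Z/X=\tfrac{1}{\alpha}\xi^{\sigma}f^{p-1}(\xi)\sim c'\,\xi^{\sigma+(1-p)(N-2)/m}\to0$ and $w=1/X\to0$ as $\xi\to0$, all consistent with the approach to $Q_5$ and with the positivity of $\mu_2$, $\mu_3$.

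The step I expect to be the main obstacle is the upgrade from the leading-order relation $(\log f)'\sim-(N-2)/(m\xi)$ to the sharp asymptotic $f\sim D\xi^{-(N-2)/m}$ with a finite positive constant $D$. This needs the error $y(\xi)+(N-2)/m$ (together with $z(\xi)$ and $w(\xi)$) to be controlled along the orbit near the node, concretely so that $(y(\xi)+(N-2)/m)/\xi$ is integrable near $\xi=0$ and contributes only a convergent additive constant to $\log f$. Since $w=1/X$ is a positive power of $\xi$ and, from the linearised flow at the node, $y+(N-2)/m$ and $z$ decay at power rates in $w$ (obtained from the approximate equations for $dz/dw$ and $dy/dw$ near $Q_5$), this integrability holds. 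I would carry out these estimates exactly as in the corresponding step for $Q_1$ in the preceding lemma, where the analogous integrations are performed explicitly.
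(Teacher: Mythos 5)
Your proposal is correct and follows essentially the same route as the paper: linearise \eqref{systinf1} at $(y,z,w)=(-(N-2)/m,0,0)$, observe that all three eigenvalues are positive for $N\geq3$, and integrate $\xi f'/f\to-(N-2)/m$ to obtain \eqref{beh.Q5}, confirming $\xi\to0$ via $z=Z/X=\alpha^{-1}\xi^{\sigma}f^{p-1}\to0$. The only differences are cosmetic: your $(1,3)$ Jacobian entry $1+\beta(N-2)/\alpha m$ is the correct one (the paper writes $1$, which is immaterial since the matrix is triangular), and you are somewhat more explicit than the paper about upgrading the logarithmic-derivative asymptotics to a sharp constant $D>0$.
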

\begin{proof}
This is now immediate, since the linearization of the system \eqref{systinf1} in a neighborhood of $Q_5$ has the matrix
$$
M(Q_5)=\left(
         \begin{array}{ccc}
           N-2 & 0 & 1 \\
           0 & \sigma+\frac{(1-p)(N-2)}{m} & 0 \\
           0 & 0 & 2+\frac{(m-1)(N-2)}{m} \\
         \end{array}
       \right)
$$
with three positive eigenvalues for $N\geq3$. The local behavior is given by the fact that $y\to-(N-2)/m$, which in terms of profiles translates (after undoing the changes of variables \eqref{systinf1} and \eqref{PSvar1}) into
$$
\frac{f'(\xi)}{f(\xi)}\sim-\frac{N-2}{m\xi},
$$
which gives the local behavior \eqref{beh.Q5}. Moreover, this behavior is taken in the limit as $\xi\to0$, since we have that on any orbit going out of $Q_5$, $Z/X\to0$, that is,
$$
\xi^{\sigma}f(\xi)^{p-1}\sim\xi^{\sigma+(1-p)(N-2)/m}\to0,
$$
whence $\xi\to0$, since the last exponent is positive for $0<p<1$.
\end{proof}
Let us remark here that this local behavior is in strong contrast with the one-dimensional case \cite{IS20b}, where no vertical asymptote was allowed and the critical point $Q_1$ was an unstable node, while $Q_5$ had only one interesting orbit. We explain better this significant change in the forthcoming Subsection \ref{subsec.inf2}.

Let us now pass to the local analysis of the critical points $Q_2$ and $Q_3$. These points are analyzed by a new change of variable to a system obtained by projecting on the $Y$ variable, according to Part (b) of \cite[Theorem 5, Section 3.10]{Pe}. This gives that the flow of the system \eqref{PSsyst1} near the points $Q_2$ and $Q_3$ is topologically equivalent to the flow near the origin $(x,z,w)=(0,0,0)$ of the system
\begin{equation}\label{systinf2}
\left\{\begin{array}{ll}\pm\dot{x}=-mx-(N-2)x^2-\frac{\beta}{\alpha}xw+x^2w-xzw,\\
\pm\dot{z}=-(m+p-1)z-\frac{\beta}{\alpha}zw-(N+\sigma-2)xz-z^2w+xzw,\\
\pm\dot{w}=-w-\frac{\beta}{\alpha}w^2+xw^2-Nxw-zw^2,\end{array}\right.
\end{equation}
thus we can state the following result.
\begin{lemma}[Local analysis near $Q_2$ and $Q_3$]\label{lem.Q23}
The critical point $Q_{2}=(0,1,0,0)$ is an unstable node. The orbits going out of $Q_2$ to the finite part of the phase space contain profiles $f(\xi)$ which vanish at some $\xi_0\in(0,\infty)$ in the sense that $f(\xi_0)=0$, $(f^m)'(\xi_0)>0$. The critical point $Q_3$ is a stable node. The orbits entering the point $Q_3$ from the finite part of the phase space contain profiles $f(\xi)$ which vanish at some $\xi_0\in(0,\infty)$ in the sense that $f(\xi_0)=0$, $(f^m)'(\xi_0)<0$.
\end{lemma}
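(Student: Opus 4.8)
The plan is to follow the same strategy as for the previous critical points at infinity: pass to the projective chart that makes $Q_2$ and $Q_3$ finite, compute the linearization there, read off the node structure, and then translate the nodal behavior back through the chains of substitutions \eqref{PSvar1} and \eqref{systinf2} into a statement about the profiles $f(\xi)$. First I would evaluate the Jacobian of the system \eqref{systinf2} at the origin $(x,z,w)=(0,0,0)$. Because that system is already in a convenient diagonal-looking form near the origin (the linear parts are $-mx$, $-(m+p-1)z$, $-w$ up to the overall sign $\pm$), the linearization is immediately seen to be diagonal with entries $\mp m$, $\mp(m+p-1)$, $\mp 1$. For $Q_2$ we take the upper sign, obtaining three eigenvalues $m>0$, $m+p-1$ and $1>0$; here one must note that $m+p-1>0$ in the range \eqref{range.exp} (indeed $m>1$ and $p>0$ give $m+p-1>0$, and in the borderline regime $m+p\ge 2$ it is even $\ge 1$), so all three eigenvalues are strictly positive and $Q_2$ is an unstable node. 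For $Q_3$ we take the lower sign, flipping all three signs, so all eigenvalues are strictly negative and $Q_3$ is a stable node.

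Next I would extract the profile behavior along the orbits leaving $Q_2$, respectively entering $Q_3$. The key observation is that $Q_{2,3}$ correspond to $|Y/X|,|Y/Z|\to\infty$, i.e. to $Y\to\pm\infty$ while $X$ and $Z$ stay comparatively small; in the original variables \eqref{PSvar1} this means $\xi^{-1}f^{m-2}f'\to\pm\infty$ with $\xi^{-2}f^{m-1}\to 0$ and $\xi^{\sigma-2}f^{m+p-2}\to 0$. Combining $X\to 0$ with $Y\to\pm\infty$ forces $f\to 0$ at a finite value $\xi_0$ (the product $\xi^{-2}f^{m-1}$ cannot vanish for $\xi\to0$ or $\xi\to\infty$ along such an orbit, exactly as in the argument closing the proof of Lemma \ref{lem.P2}), so the profile reaches zero at some $\xi_0\in(0,\infty)$. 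The sign of $Y$ at the node then fixes the sign of $f'$, and hence of $(f^m)'$, as $\xi\to\xi_0$: at $Q_2$ one has $Y\to+\infty$, so $f'>0$ near $\xi_0$ and $(f^m)'(\xi_0)>0$ (the profile is emerging from zero), whereas at $Q_3$ one has $Y\to-\infty$, so $f'<0$ near $\xi_0$ and $(f^m)'(\xi_0)<0$ (the profile is descending to zero). One checks that the limit $(f^m)'(\xi_0)$ is a nonzero finite constant — not zero — precisely because $Y/X$ blows up, which is what distinguishes this vanishing from an interface point; this is the reason these orbits do not contain good profiles with interface and instead describe profiles that cross zero with nonzero slope of $f^m$.

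The routine but slightly delicate step is the careful integration that turns the asymptotic relations near $Q_{2,3}$ into the precise statements $f(\xi_0)=0$, $(f^m)'(\xi_0)\gtrless 0$; this is entirely analogous to the corresponding passage in \cite[Section 3]{IS20b} for $N=1$, and since the equations for $\dot x$, $\dot z$, $\dot w$ in \eqref{systinf2} differ from the one-dimensional case only through the $N$-dependent coefficients $(N-2)$, $N$ and $(N+\sigma-2)$ in the quadratic terms — which do not affect the linearization — the computation carries over verbatim, so I would simply cite that reference for the details. The main obstacle, if any, is purely bookkeeping: making sure the overall sign convention $\pm$ chosen in \eqref{systinf2} is consistently matched to the direction of the flow in the original variable $Y$ (so that "unstable/stable node" is correctly paired with $Q_2$/$Q_3$ and with the sign of $(f^m)'(\xi_0)$), exactly as the sign was fixed for the $X$-projection in the discussion following \eqref{systinf1}.
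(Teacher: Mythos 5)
Your proposal is correct and follows essentially the same route as the paper, which simply observes that the system \eqref{systinf2} depends on $N$ only through the coefficients of the quadratic terms and defers entirely to \cite[Lemma 3.4]{IS20b}; your explicit diagonal linearization at the origin and the translation of $Y=(f^m)'(\xi)/(\alpha\xi f(\xi))\to\pm\infty$ together with $X\to0$ into the vanishing condition $f(\xi_0)=0$, $(f^m)'(\xi_0)\gtrless0$ is exactly what that reference carries out. The only caveat is the one you already flag yourself: with the system written as $\pm\dot x=-mx+\cdots$, the unstable node $Q_2$ corresponds to the lower sign (so that the eigenvalues become $m$, $m+p-1$, $1$, all positive since $m>1$ and $p>0$), but this is pure bookkeeping and does not affect the substance of the argument.
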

Since the system \eqref{systinf2} does not depend in any essential way on the dimension $N$, the proof is completely similar to the one of \cite[Lemma 3.4]{IS20b}, to which we refer. Notice that the profiles in Lemma \ref{lem.Q23} \emph{have no interface}, since they do not fulfill the contact condition $(f^m)'(\xi_0)=0$ at the vanishing point. We end the analysis with the critical point $Q_4$, which can be totally discarded according to the next Lemma.
\begin{lemma}[No orbits at $Q_4$]\label{lem.Q4}
There are no profiles contained in the orbits connecting to the critical point $Q_4$.
\end{lemma}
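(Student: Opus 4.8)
The plan is to show that the critical point $Q_4=(0,0,1,0)$, which sits on the equator of the Poincaré hypersphere in the direction of the $Z$-axis, is a repeller (or at least has no orbits entering or leaving it that correspond to actual profiles), by passing to the local chart obtained by projecting onto the $Z$ variable, i.e. using part (c) of \cite[Theorem 5, Section 3.10]{Pe}. First I would write down the topologically equivalent system in the variables $x=X/Z$, $y=Y/Z$, $w=1/Z$ near the origin $(x,y,w)=(0,0,0)$, exactly as was done for $Q_1,Q_5$ in \eqref{systinf1} and for $Q_{2,3}$ in \eqref{systinf2}. Reading off the linear part of the third equation from the $\dot Z$ component of \eqref{PSsyst1}, the variable $w=1/Z$ satisfies $\dot w\sim -(\sigma-2)w\cdot(\text{something})$; more to the point, the dominant term in $\dot Z$ near $Q_4$ comes from $Z[(m+p-2)Y+(\sigma-2)X]$, which vanishes to higher order since both $X,Y\to0$ faster than $Z\to\infty$, so one must keep track carefully of which monomials survive after the projective rescaling.

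The key step is to identify the eigenvalues of the linearization at $(x,y,w)=(0,0,0)$ in this chart. I expect that the linear part is governed by the $\dot X$ and $\dot Y$ equations of \eqref{PSsyst1}: in the chart, $\dot x = \dot X/Z - X\dot Z/Z^2$ and similarly for $\dot y$, and since near $Q_4$ we have $X,Y$ small while $Z$ is large, the terms $X[(m-1)Y-2X]$ and $-Y^2-\tfrac{\beta}{\alpha}Y+X-NXY-Z$ get rescaled so that the leading contributions are, respectively, of order $\epsilon^2$ and $-1$ (from the $-Z$ term, which becomes $-Z/Z=-1$, i.e. a nonzero constant, not a linear term). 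This constant term in the $\dot y$-equation is the crucial observation: it means $Q_4$ is not a genuine critical point of the projected system in the naive sense, or equivalently, the flow near $Q_4$ is transverse to the relevant face. Concretely, I would show that along any orbit approaching $Q_4$ one would need $Z\to\infty$ with $X\to0$, $Y\to0$; translating via \eqref{PSvar1}, this forces $\xi^{\sigma-2}f^{m+p-2}\to\infty$ while $\xi^{-2}f^{m-1}\to0$ and $\xi^{-1}f^{m-2}f'\to0$, and I would derive a contradiction from combining these — for instance, $Z/X=\alpha\xi^{\sigma}f^{p-1}\to\infty$ combined with $X\to0$ pins down incompatible rates for $f$ and $\xi$, since $\sigma>0$ and $p<1$ make the relevant exponents have a fixed sign.

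The main obstacle will be the bookkeeping in the projective chart: making sure one has correctly kept the constant/linear terms after the rescaling $X=x/w$, $Y=y/w$, $Z=1/w$ (with the appropriate time reparametrization and sign choice to match the flow direction, as dictated by the sign of $\dot Z$ near $Q_4$, namely $\dot Z>0$ there since $X,Z\ge0$ and the factor $(\sigma-2)X$ dominates — wait, one must check the sign of $(m+p-2)Y+(\sigma-2)X$ carefully, and this is exactly where the argument has teeth). Once the projected system is correctly in hand, concluding is routine: either the linearization has all eigenvalues of one sign combined with the obstruction from the constant term, or — more likely, given the phrasing "there are no profiles contained in the orbits connecting to $Q_4$" — one shows directly from the definitions \eqref{PSvar1} that no choice of profile $f(\xi)$ and $\xi$-range can realize the limiting behavior $X\to0$, $Y\to0$, $Z\to\infty$ simultaneously, so that even if such connecting orbits exist in the abstract phase space they do not correspond to solutions of \eqref{SSODE}. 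Since the system \eqref{systinf2}-type analysis here is essentially $N$-independent in its leading structure, I would also remark that the argument is identical to the one-dimensional case and refer to \cite[Lemma 3.4]{IS20b} or its analogue for the omitted computational details.
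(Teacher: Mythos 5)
Your proposal has a genuine gap, and it starts with a mischaracterization of what it means for an orbit to connect to $Q_4$. The point $Q_4=(0,0,1,0)$ lies on the equator of the Poincar\'e hypersphere, so an orbit approaching it satisfies $Z\to\infty$ together with $X/Z\to0$ and $Y/Z\to0$; it does \emph{not} force $X\to0$ and $Y\to0$ (the components $X$ and $Y$ may themselves diverge, only more slowly than $Z$). The paper works precisely with the correct conditions $Z\to\infty$, $Z/X\to\infty$, $Z/Y\to\infty$. Because of this, your proposed ``contradiction directly from the definitions \eqref{PSvar1}'' cannot close: the condition $Z/X=\frac{1}{\alpha}\xi^{\sigma}f(\xi)^{p-1}\to\infty$ (and similarly for $Z/Y$) only says that $f$ is small relative to certain powers of $\xi$, and such conditions are perfectly compatible algebraically, for instance near a vanishing point $\xi_0\in(0,\infty)$ or as $\xi\to\infty$. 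No contradiction arises without invoking the differential equation itself: the paper's argument translates the limits into profile language and then eliminates, one by one, the three possible regimes $\xi\to0$, $\xi\to\xi_0\in(0,\infty)$ and $\xi\to\infty$, using \eqref{SSODE} as a whole in the last case (where one must also show that the new term $(N-1)(f^m)'(\xi)/\xi$ is negligible). This case analysis is the substantive content of the proof and is entirely absent from your plan.

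Your chart computation also goes astray. After the time reparametrization by $w=1/Z$ prescribed by \cite[Theorem 5, Section 3.10]{Pe}, the apparent ``constant term $-1$'' in the $\dot y$-equation becomes the linear term $-w$, and the origin of the chart is a genuine critical point of the projected system --- indeed a highly degenerate one, with $\dot x$ and $\dot w$ purely quadratic and $\dot y\approx -w$, hence a nilpotent linearization. So there is no transversality obstruction and no eigenvalue argument available; this degeneracy is exactly why the proof must be carried out at the level of profiles rather than by local linear analysis. Finally, the reference you would cite for the omitted details, \cite[Lemma 3.4]{IS20b}, concerns $Q_2$ and $Q_3$; the relevant one is \cite[Lemma 3.6]{IS20b}.
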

The proof is rather long and uses the fact that on an orbit connecting to $Q_4$, one has $Z\to\infty$, $Z/X\to\infty$ and $Z/Y\to\infty$. This is translated via \eqref{PSvar1} in terms of profiles and then a contradiction is obtained to any possible behavior connecting $Q_4$ (that is, when $\xi\to0$, when $\xi\to\xi_0\in(0,\infty)$ and when $\xi\to\infty$) by eliminating these possibilities one by one using the equation for the profiles \eqref{SSODE}. Since the equation \eqref{SSODE} is used as a whole only when $\xi\to\infty$ and the profile $f(\xi)$ is decreasing on an interval $(R,\infty)$, and in this case the new term $(N-1)(f^m)'(\xi)/\xi$ is easily negligible, the proof is very similar to the one of \cite[Lemma 3.6]{IS20b} and we omit here the details.

\subsection{The special case $N=2$}\label{subsec.inf2}

As we have seen in the previous subsection, in dimension $N=2$ the critical points $Q_1$ and $Q_5$ coincide and their local analysis is different. We gather in the next result these changes.
\begin{lemma}\label{lem.Q15N2}
Let $N=2$. Then the critical point $Q_1=Q_5$ is a saddle-node in the sense of the theory in \cite[Section 3.4]{GH}. There exists a two-dimensional unstable manifold on which the orbits go out into the half-space $\{Y>0\}$ and these orbits contain good profiles with local behavior given by \eqref{beh.Q1} as $\xi\to0$. All the rest of the orbits go out into the half-space $\{Y<0\}$ and contain profiles with a vertical asymptote at $\xi=0$ given by
\begin{equation}\label{beh.Q12}
f(\xi)\sim D\left(-\ln\,\xi\right)^{1/m}, \qquad {\rm as} \ \xi\to0, \ D>0.
\end{equation}
\end{lemma}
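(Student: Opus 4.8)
The plan is to redo, now on the system \eqref{systinf1} with $N=2$, the local analyses carried out near $Q_1$ (Lemma~\ref{lem.Q1}) and near $Q_5$ for $N\ge3$; when $N=2$ these two critical points merge at the origin $(y,z,w)=(0,0,0)$ of that chart. First I would linearize \eqref{systinf1} with $N=2$ at $Q_1=Q_5$: the matrix has rows $(0,0,1)$, $(0,\sigma,0)$ and $(0,0,2)$, hence eigenvalues $0$, $\sigma>0$ and $2>0$, so the point is non-hyperbolic with a two-dimensional unstable manifold and a one-dimensional center manifold tangent to the $y$-axis. Since $\{z=0\}$ and $\{w=0\}$ are invariant for \eqref{systinf1}, the $y$-axis is itself an invariant center manifold, and the reduced flow on it is exactly $\dot y=-my^{2}$. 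A nondegenerate quadratic term on a one-dimensional center manifold together with a hyperbolic complement is precisely the saddle-node normal form of \cite[Section~3.4]{GH}, which gives the first assertion. I would also record the time reparametrization linking \eqref{systinf1} to the profile variable: undoing the Poincar\'e rescaling gives $d\tau=X\,d\eta=d\xi/\xi$, so $\tau\sim\ln\xi$ and the approach to $Q_1=Q_5$ (where $\tau\to-\infty$ and $w\to0^{+}$) is exactly the limit $\xi\to0$.

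For the two-dimensional unstable manifold I would parametrize its orbits by $w$, as in the proof of Lemma~\ref{lem.Q1}: one gets $dz/dw\sim\sigma z/2w$ (hence $z\sim Cw^{\sigma/2}$) and, after discarding the lower-order term $my^{2}/2w$, $dy/dw\sim 1/2$, so $y\sim w/2$ for the orbit through the origin and therefore $Y=y/w\to1/2>0$: these orbits go out into $\{Y>0\}$. Translating $Y=\frac{m}{(m-1)\alpha}\xi^{-1}(f^{m-1})'\to\frac12$ and integrating gives $f^{m-1}\sim C+\frac{\alpha(m-1)}{4m}\xi^{2}$, where $C>0$ because $w\sim W_{0}e^{2\tau}\sim W_{0}'\xi^{2}$ forces $f^{m-1}=\frac{\alpha}{m}\xi^{2}X=\frac{\alpha}{m}\xi^{2}/w\to\alpha/(mW_{0}')>0$; this is exactly \eqref{beh.Q1} with $N=2$, taken as $\xi\to0$ since $X\to\infty$ at $Q_1$. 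This step is the literal analogue of the case $N\ge3$ and is not where the difficulty lies.

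The substantial part is the remaining orbits. By the saddle-node structure, every orbit going out of $Q_1=Q_5$ that is not on the two-dimensional unstable manifold lies in the parabolic sector and leaves tangentially to the center manifold, i.e.\ with $y\sim1/(m\tau)\to0^{-}$ (integrating $\dot y=-my^{2}$); equivalently, in the $w$-parametrization the term $my^{2}/2w$ is now dominant, $dy/dw\sim-my^{2}/2w$, whence $y\sim2/(m\ln w)\to0^{-}$. Since $w>0$, this means $Y=y/w\to-\infty$, so these orbits go out into $\{Y<0\}$. For the profile behavior I would feed $y\sim1/(m\tau)$ into $\frac{d}{d\tau}\ln w=2-(m-1)y$, integrate to get $w\sim W_{0}e^{2\tau}(-\tau)^{-(m-1)/m}$, and then use $\tau\sim\ln\xi$ together with $X=1/w=\frac{m}{\alpha}\xi^{-2}f^{m-1}$ to obtain $f^{m-1}\sim\frac{\alpha}{mW_{0}}(-\ln\xi)^{(m-1)/m}$, i.e.\ $f(\xi)\sim D(-\ln\xi)^{1/m}$ with $D>0$, which is \eqref{beh.Q12}. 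This is exactly what \eqref{SSODE} with $N=2$ dictates: there the leading operator $(f^{m})''+\xi^{-1}(f^{m})'$ is the radial two-dimensional Laplacian of $f^{m}$, and a power-law singularity $f^{m}\sim C\xi^{-a}$ would require, upon balancing with the dominant remaining term $\alpha f$, a negative exponent; hence the only admissible unbounded behavior is $f^{m}\sim C(-\ln\xi)$, recovering \eqref{beh.Q12}.

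The main obstacle, and the reason the $N=2$ statement looks qualitatively different from the one for $N\ge3$, is precisely this last branch: at $N=2$ the power $\xi^{-(N-2)/m}$ appearing in \eqref{beh.Q5} degenerates, and one has to propagate the slow center-manifold rate $y\sim1/(m\tau)$ through the reparametrization $\tau\sim\ln\xi$ to see that its correct replacement is $(-\ln\xi)^{1/m}$; a naive power-law ansatz fails here. The rest is routine bookkeeping: using the center manifold theorem \cite[Theorem~1, Section~2.12]{Pe} and the parabolic-sector description of a saddle-node from \cite[Section~3.4]{GH} to check that these two families exhaust all orbits going out of $Q_1=Q_5$, and verifying that the terms discarded from \eqref{systinf1} and from \eqref{SSODE} are genuinely of lower order along each branch.
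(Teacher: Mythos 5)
Your proposal is correct, and its skeleton coincides with the paper's: identify $Q_1=Q_5$ at $N=2$ as a saddle-node with eigenvalues $0,\sigma,2$ (your linearization is the right one), treat the two-dimensional unstable manifold exactly as in Lemma \ref{lem.Q1} to get \eqref{beh.Q1} with $Y\to 1/N=1/2$, and observe that every remaining outgoing orbit leaves tangentially to the center direction governed by $\dot y\sim -my^2$, hence into $\{y<0\}=\{Y<0\}$ with $Y=y/w\to-\infty$. Where you genuinely diverge from the paper is in the derivation of the logarithmic asymptotics \eqref{beh.Q12}. The paper never integrates the reduced center-manifold dynamics: it only uses the qualitative facts $X\to\infty$, $Z/X\to 0$ and $Y\to-\infty$ to show (via the ratios \eqref{interm7}--\eqref{interm9}) that the last three terms of \eqref{SSODE} are negligible against the first two, and then integrates $(f^m)''+\xi^{-1}(f^m)'\sim 0$ directly at the level of profiles. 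You instead integrate $\dot y\sim 1/(m\tau)$ and $\frac{d}{d\tau}\ln w=2-(m-1)y$ explicitly and convert through the reparametrization $d\tau=X\,d\eta=d\xi/\xi$; this is a legitimate alternative that yields more quantitative information (the correction exponent $(m-1)/m$ in $w$, and the positivity of the constant $C$ in \eqref{beh.Q1}, which the paper leaves implicit), at the price of having to justify that the tangency rate $y\sim 1/(m\tau)$ from the saddle-node theory can be propagated through the integration of $\ln w$ --- a standard but nontrivial point that your final paragraph correctly flags as needing the parabolic-sector description from \cite[Section 3.4]{GH}. The paper's route is more robust precisely because it bypasses this quantitative step. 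Your closing consistency check against the radial two-dimensional Laplacian is, in effect, a compressed version of the paper's actual argument.
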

\begin{proof}
We observe that for $N=2$, the linear term in the equation for $\dot{y}$ in the system \eqref{systinf1} vanishes. More precisely, if we let $\mu=2-N$, we are dealing at $\mu=0$ with a \emph{transcritical bifurcation} according to the definition introduced in \cite{S73} (see also \cite[Section 3.4]{GH}). It follows that at $N=2$ the critical point $Q_1=Q_5$ is a saddle-node with a matrix having eigenvalues $\lambda_1=0$, $\lambda_2=\sigma$ and $\lambda_3=2$. The two-dimensional unstable manifold tangent to the vector space spanned by the eigenvectors $e_2=(0,1,0)$ and $e_3=(0,0,1)$, that is the plane $\{y=0\}$, contains orbits inherited from the critical point $Q_1$ and their analysis is totally similar as the one in Lemma \ref{lem.Q1}, thus their local behavior is given by \eqref{beh.Q1}, as proved there.

All the other orbits, according to the theory in \cite[Section 3.4]{GH}, go out tangent to the direction of the eigenvector corresponding to the eigenvalue $\lambda=0$, that is the $y$-axis in the variables of the system \eqref{systinf1}. This means that $|y/w|\to\infty$, $|y/z|\to\infty$ on these orbits when approaching the point $Q_1$. The first equation in \eqref{systinf1} gives then locally that $\dot{y}\sim-my^2<0$, which proves that the orbits go out into the region $\{y<0\}$, which coincides with $\{Y<0\}$ in a neighborhood of $Q_1$, according to \eqref{interm6}. This also shows that, without absolute values, $Y=y/w\to-\infty$ and $Y/Z=y/z\to-\infty$ along these orbits. It only remains to establish that the profiles contained in these orbits have a local behavior given by \eqref{beh.Q12}. The next plan of the proof is to use Eq. \eqref{SSODE} and show that in a neighborhood of the point $Q_1$ and along these orbits, the first two terms dominate over the last three and thus the local behavior is given in a first order approximation by the combination of them. To this end, we first notice that
\begin{equation}\label{interm7}
\frac{\beta\xi f'(\xi)}{(N-1)(f^m)'(\xi)/\xi}=\frac{\beta}{m(N-1)}\xi^2f(\xi)^{1-m}=\frac{\beta}{m(N-1)}\frac{1}{X}\to0,
\end{equation}
since $X\to\infty$ when approaching $Q_1$. Next, we also discover that
\begin{equation}\label{interm8}
\frac{\xi^{\sigma}f(\xi)^p}{\alpha f(\xi)}=\frac{Z}{X}\to0,
\end{equation}
and finally,
\begin{equation}\label{interm9}
\frac{\alpha f(\xi)}{(N-1)(f^m)'(\xi)/\xi}=\frac{\alpha}{m(N-1)}\frac{1}{\xi^{-1}(f^{m-2}f')(\xi)}=\frac{1}{(N-1)Y}\to0,
\end{equation}
as we just proved above that $Y=y/w\to-\infty$ on these orbits. By gathering \eqref{interm7}, \eqref{interm8} and \eqref{interm9}, we find that in a small neighborhood of $Q_1$, the last three terms in Eq. \eqref{SSODE} are negligible with respect to the first two of them, and the local behavior is led in the first approximation by
$$
(f^m)''(\xi)+\frac{N-1}{\xi}(f^m)'(\xi)\sim 0,
$$
which gives the desired logarithmic behavior \eqref{beh.Q12}. Since $X\to\infty$ at $Q_1$, it is then obvious that this limit is taken as $\xi\to0$.
\end{proof}
This transcritical bifurcation of the system \eqref{systinf1} at $N=2$ explains better why the analysis at infinity of the phase space strongly departs with respect to the one in dimension $N=1$: if thinking at the dimension $N$ as a parameter of the system (that can be considered any positive real number), at $N=2$ the two critical points match and they interchange their behavior: if in $N=1$ the unstable node was $Q_1$, while $Q_5$ had just one interesting orbit going out \cite{IS20b}, for $N\geq3$ it is $Q_5$ the new unstable node and $Q_1$ remains as a saddle. We are now ready to begin the study of the connections in the phase space.

\section{Existence of good profiles with interface}\label{sec.exist}

This goal of this section is to complete the proof of Theorem \ref{th.exist}. Of course, when $m+p>2$ we already know that there exist orbits with interface of Type II for any $\sigma>0$ by Lemma \ref{lem.P0}, thus we will be interested in proving the existence of profiles with interface of Type I, while for $m+p=2$ we show that there are good profiles entering any of the critical points $P_0^{\lambda}$ with $-\beta/2\alpha\leq\lambda<0$. The strategy of all these proofs is to show that the limiting behavior on the stable manifold entering the corresponding critical points is given by orbits coming, on one side, from the node $Q_5$ and on the other side from the node $Q_2$, and then conclude that there are some orbits in between that must contain good profiles, using the topological "three-sets argument". Since particularly in this section many proofs do not differ much from the ones in dimension $N=1$, we will be rather brief.

\subsection{Good profiles with interface of Type I when $m+p>2$}

We employ an argument of \emph{backward shooting from the interface point} similar to the one used in \cite[Section 4]{IS20b}. The original system \eqref{PSsyst1} is not suitable for this, since all the profiles with interface of Type I are gathered into a single critical point $P_1$. We thus introduce a further change of variable in order to identify any profile uniquely with its interface point. We thus let
\begin{equation}\label{PSvar3}
\begin{split}
&U=X^{(m+p-2)/(m-1)}=\left(\frac{m}{\alpha}\right)^{(m+p-2)/(m-1)}\xi^{-2(m+p-2)/(m-1)}f(\xi)^{m+p-2}, \\
&V=\frac{Z}{U}=\frac{1}{\alpha}\left(\frac{m}{\alpha}\right)^{(1-p)/(m-1)}\xi^{[\sigma(m-1)+2(p-1)]/(m-1)}.
\end{split}
\end{equation}
and write the new system in variables $(U,Y,V)$
\begin{equation}\label{PSsyst3}
\left\{\begin{array}{ll}\dot{U}=\frac{m+p-2}{m-1}U[(m-1)Y-2U^{(m-1)/(m+p-2)}],\\
\dot{Y}=-Y^2-\frac{\beta}{\alpha}Y+U^{(m-1)/(m+p-2)}(1-NY)-UV,\\
\dot{V}=\frac{\sigma(m-1)+2(p-1)}{m-1}U^{(m-1)/(m+p-2)}V,\end{array}\right.
\end{equation}
Notice that $V$ is an elementary function (a power) of $\xi$, thus it is increasing along the trajectories of the system \eqref{PSsyst3}. On the other hand, the profiles having an interface at type I at some fixed point $\xi=\xi_0\in(0,\infty)$ are contained on trajectories entering the critical points
$$
P(v_0)=\left(0,-\frac{\beta}{\alpha},v_0\right), \qquad v_0=\frac{1}{\alpha}\left(\frac{m}{\alpha}\right)^{(1-p)/(m-1)}\xi_0^{[\sigma(m-1)+2(p-1)]/(m-1)}.
$$
The backward shooting strategy is contained in the next statement.
\begin{proposition}\label{prop.exist1}
Let $m$, $p$ and $\sigma$ as in \eqref{range.exp} such that $m+p>2$. For any critical point $P(v_0)$ with $v_0\in(0,\infty)$, there exists a unique orbit entering it. Moreover:
\begin{itemize}
\item The orbits entering points $P(v_0)$ with $v_0>0$ sufficiently small contain profiles that are decreasing for $\xi\in(0,v_0)$ and having a vertical asymptote at $\xi=0$, thus corresponding to orbits going out of $Q_5$ in the original system \eqref{PSsyst1}
\item The orbits entering points $P(v_0)$ with $v_0>0$ sufficiently large correspond to orbits going out of $Q_2$ in the original system \eqref{PSsyst1}.
\end{itemize}
\end{proposition}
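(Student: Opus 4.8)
The plan is to settle the local structure of the system \eqref{PSsyst3} at each critical point $P(v_0)$, and then to run a backward shooting in $\xi$ starting from $P(v_0)$, controlling the two extreme regimes $v_0\to0$ and $v_0\to\infty$ by continuity from suitable limiting configurations.

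For the local analysis at $P(v_0)=(0,-\beta/\alpha,v_0)$ I would linearize \eqref{PSsyst3}. In the range \eqref{range.exp} with $m+p>2$ one has $(m-1)/(m+p-2)>1$, so every term carrying the power $U^{(m-1)/(m+p-2)}$ (hence the whole equation for $\dot V$) is of higher order near $U=0$, and the only linear couplings left are the quadratic ones and the term $-UV$. A direct computation then gives a Jacobian in block form, with a $2\times2$ block $\left(\begin{smallmatrix}-(m+p-2)\beta/\alpha & 0\\ -v_0 & \beta/\alpha\end{smallmatrix}\right)$ on the $(U,Y)$ variables and a $0$ on the $V$ variable, hence eigenvalues $-(m+p-2)\beta/\alpha<0$, $\beta/\alpha>0$ and $0$. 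The zero eigenvalue has eigenvector tangent to the $V$-axis, i.e.\ tangent to the curve of equilibria $\{P(v):v>0\}$, which is therefore the one-dimensional center manifold and consists entirely of fixed points; the strong stable eigenvector is $\propto\bigl(1,\tfrac{v_0\alpha}{(m+p-1)\beta},0\bigr)$ and enters the physical region $\{U>0\}$. Thus the set of orbits converging to $P(v_0)$ is exactly the one-dimensional strong stable fibre over $P(v_0)$ (orbits on the center-stable manifold that miss this fibre converge instead to some other $P(v')$), which proves the uniqueness claim. Along that orbit $\dot V=\tfrac{\sigma(m-1)+2(p-1)}{m-1}U^{(m-1)/(m+p-2)}V>0$ whenever $U,V>0$, so $V$ increases up to $v_0$; translating $U\to0$, $Y\to-\beta/\alpha$, $V\to v_0$ through \eqref{PSvar3} as in Lemma \ref{lem.P1} gives the Type I interface behavior $f(\xi)\sim C(\xi_0-\xi)^{1/(m-1)}$ with $(f^m)'(\xi_0)=0$, where $\xi_0$ is the unique value with $V(\xi_0)=v_0$. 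Since $\sigma(m-1)+2(p-1)>0$, the map $v_0\mapsto\xi_0$ is an increasing bijection of $(0,\infty)$ onto itself, so $v_0\to0$ is $\xi_0\to0$ and $v_0\to\infty$ is $\xi_0\to\infty$, and by the standard smooth dependence of strong stable fibres the orbit $O(v_0)$ entering $P(v_0)$ depends continuously on $v_0$, which is what lets us transfer information from the two limits.

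For the regime $v_0\to0$, I would rescale $\xi=\xi_0\zeta$, $f(\xi)=\xi_0^{2/(m-1)}\phi(\zeta)$ in \eqref{SSODE}: the diffusion terms, the term $-\alpha f$ and the term $\beta\xi f'$ all scale like $\xi_0^{2/(m-1)}$, while the reaction term scales like $\xi_0^{\sigma+2p/(m-1)}$ with $\sigma+2p/(m-1)>2/(m-1)$ precisely because $\sigma>2(1-p)/(m-1)$. Hence, as $\xi_0\to0$, the orbit $O(v_0)$ converges (locally uniformly in the phase space) to the unique orbit lying in the invariant plane $\{Z=0\}$ and entering $P_1$, i.e.\ the one-dimensional stable manifold of $P_1$ inside $\{Z=0\}$. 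A study of this two-dimensional reduced portrait, entirely analogous to the one in \cite[Section 4]{IS20b} and using the local analysis of $Q_5$ from Section \ref{sec.inf} (and of $Q_1=Q_5$ from Lemma \ref{lem.Q15N2} when $N=2$), shows that this limiting orbit is born at $Q_5$. Since $Q_5$ is an unstable node for $N\geq3$ and a saddle-node with a parabolic sector for $N=2$, the set of orbits emanating from it is relatively open, so the limiting orbit lies in its interior, and by continuous dependence on $v_0$ so does $O(v_0)$ for $v_0$ small; this forces the corresponding profile to have the vertical asymptote \eqref{beh.Q5} (resp.\ \eqref{beh.Q12}) at $\xi=0$ and, from the sign of $(f^m)'$ dictated by that behavior together with the interface condition at $\xi_0$, to be decreasing on $(0,\xi_0)$.

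For the regime $v_0\to\infty$ one argues along the same scheme, now rescaling $f(\xi)=\xi_0^{(\sigma+2)/(m-p)}\phi(\xi/\xi_0)$ so that the reaction term balances the diffusion terms while $-\alpha f$ and $\beta\xi f'$ become negligible; checking through \eqref{PSvar3} that this regime drives the relevant part of $O(v_0)$ towards the infinity of the phase space, a study of the limiting configuration — again modeled on \cite[Section 4]{IS20b} — identifies $Q_2$ as the critical point out of which $O(v_0)$ emanates for $v_0$ large, and since $Q_2$ is an unstable node by Lemma \ref{lem.Q23} the conclusion is once more stable under perturbation of $v_0$. The genuine difficulty of the proof, and the only non-routine point, lies exactly in these two identification steps: one must show that the limiting orbit really is the one born at $Q_5$ (resp.\ $Q_2$) and not, say, a connection to $P_2$ or to $P_0$ — which would be a codimension-one event — and then transfer this robustly to all nearby $P(v_0)$. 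This is precisely the two-dimensional analysis of \cite[Section 4]{IS20b}, which I would reproduce here after inserting the extra term $(N-1)(f^m)'/\xi$ in \eqref{SSODE} and handling the bifurcation at $N=2$ discussed in Section \ref{sec.inf}; since none of the reduced equations involved depends on $N$ in an essential way, these modifications should be harmless.
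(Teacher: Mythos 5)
Your uniqueness argument coincides with the paper's: the same linearization of \eqref{PSsyst3} at $P(v_0)$, the same eigenvalues $-(m+p-2)\beta/\alpha<0$, $\beta/\alpha>0$, $0$, and the same identification of the center direction with the line of equilibria, so that the only orbit reaching $P(v_0)$ from $\{U>0\}$ is the strong stable fibre. The divergence --- and the gap --- lies in the two shooting regimes. For $v_0\to0$ the paper does not pass to any singular limit: it computes the sign of the flow of \eqref{PSsyst3} across the plane $\{Y=-\beta/2\alpha\}$ and obtains an \emph{explicit} threshold $V_0$, the minimum of $h(U)=\bigl(1+\tfrac{N\beta}{2\alpha}\bigr)U^{(1-p)/(m+p-2)}+\tfrac{\beta^2}{4\alpha^2U}$, such that for $v_0<V_0$ the orbit entering $P(v_0)$ can never have crossed that plane. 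Trapping the orbit, backward in time, in $\{Y<-\beta/2\alpha\}$ at one stroke gives $f'<0$ on all of $(0,\xi_0)$, rules out every finite critical point and every $\alpha$-limit set as a source (via the monotonicity of $U$ and $V$ there), and leaves $Q_5$ as the only possibility. Your rescaling $f=\xi_0^{2/(m-1)}\phi(\xi/\xi_0)$ does show that the reaction term formally vanishes as $\xi_0\to0$ (equivalently, $Z$ scales by $\xi_0^{\sigma-2(1-p)/(m-1)}\to0$), but the two statements you then need --- that $O(v_0)$ actually converges to the stable separatrix of $P_1$ inside $\{Z=0\}$, and that this planar separatrix emanates from $Q_5$ --- are both asserted rather than proved. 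The first is a genuine singular limit (the interface point collapses to the origin and the equilibria $P(v_0)$ degenerate onto $P(0)$), not covered by smooth dependence of strong stable fibres, which is uniform only on compact parameter sets. The second is itself a nontrivial global planar problem: inside $\{Z=0\}$ there is an orbit leaving $P_0$ along its center manifold, and one must exclude a $P_0\to P_1$ connection as well as sources at $Q_1$; this cannot be imported verbatim from \cite{IS20b}, since, as Section \ref{sec.inf} stresses, $Q_1$ and $Q_5$ exchange roles for $N\geq2$. Note also that deducing monotonicity of $f$ on all of $(0,\xi_0)$ from the asymptotics at the two endpoints alone does not preclude oscillation in between; the barrier gives it for free.

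For $v_0\to\infty$ the paper's route is again a continuity (tubular neighborhood) argument around the explicit $Q_2\to P_1$ connection inside the invariant plane $\{X=0\}$, i.e.\ $\{U=0\}$, where the reduced system is independent of $N$; your proposed rescaling balancing reaction against diffusion points at a different limiting configuration and again stops exactly at the identification step. Since you yourself flag these identifications as ``the only non-routine point,'' the proposal as written does not yet establish either bullet of the statement. I would replace the two rescaling arguments by the barrier-plane computation for small $v_0$ and the invariant-plane continuity argument for large $v_0$, which is what the paper does.
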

\begin{proof}
Taking into account that $m-1>m+p-2$, the linearization of the system \eqref{PSsyst3} near a point $P(v_0)$ has the matrix
$$
M(v_0)=\left(
         \begin{array}{ccc}
           -\frac{\beta(m+p-2)}{\alpha} & 0 & 0 \\
           -v_0 & \frac{\beta}{\alpha} & 0 \\
           0 & 0 & 0 \\
         \end{array}
       \right),
$$
having eigenvalues $\lambda_1=-(m+p-2)\beta/\alpha<0$, $\lambda_2=\beta/\alpha>0$ and $\lambda_3=0$. Well known theoretical results on the center manifold theory such as \cite[Theorem 2.15, Chapter 9]{CH} and the Local Center Manifold Theorem \cite[Theorem 1, Section 2.10]{Pe} prove that all the one-dimensional center manifolds of $P(v_0)$ have to contain a segment of the invariant line $\{U=0, Y=-\beta/\alpha\}$. This implies that the center manifold is unique in a neighborhood of any $P(v_0)$ and by well-known results also the one-dimensional stable and unstable manifolds are unique. A standard argument (see for example \cite{dPS02} for details) gives that there exists only one orbit entering $P(v_0)$ from outside the invariant plane $\{U=0\}$, tangent to the eigenvector corresponding to $\lambda_1$, leading to the desired uniqueness.

\medskip

Consider now the plane $\{Y=-\beta/2\alpha\}$. The direction of the flow of the system \eqref{PSsyst3} on it is given by the sign of
$$
F(U,V)=\frac{\beta^2}{4\alpha^2}+U\left[\left(1+\frac{N\beta}{2\alpha}\right)U^{(1-p)/(m+p-2)}-V\right],
$$
thus an orbit can cross it coming from the half-space $\{Y>-\beta/2\alpha\}$ only in the region where $F(U,V)<0$, that is
\begin{equation}\label{interm10}
V\geq h(U):=\left(1+\frac{N\beta}{2\alpha}\right)U^{(1-p)/(m+p-2)}+\frac{\beta^2}{4\alpha^2U}.
\end{equation}
We deduce, by taking derivative in \eqref{interm10} with respect to $U$, that the function $h(U)$ attains a minimum at
$$
U_0=\left[\frac{(m+p-2)(m-p)^2}{2(\sigma+2)(1-p)(2\sigma+4+N(m-p))}\right]^{(m+p-2)/(m-1)}
$$
and if we let $V_0=h(U_0)$, we get that an orbit might reach a critical point $P(v_0)$ coming from the positive part $\{Y>0\}$ of the phase space (and thus in particular having to cross also the plane $\{Y=-\beta/2\alpha\}$) if and only if $v_0\geq V_0$. Thus, for any $v_0\in(0,V_0)$, the unique orbit entering $P(v_0)$ stays forever in the half-space $\{Y<-\beta/2\alpha\}$ and contains decreasing profiles. Since we proved in Lemma \ref{lem.Q1} that the orbits going out of $Q_1$ begin with $Y=1/N>0$ and we cannot have $\alpha$-limit cycles due to the monotonicity of $U$ and $V$ along the trajectories in the region $\{Y<-\beta/2\alpha\}$, we conclude that the orbits entering $P(v_0)$ with $v_0\in(0,V_0)$ come from the critical point $Q_5$ in the original system \eqref{PSsyst1}.

\medskip

The proof of the final statement in Proposition \ref{prop.exist1} is based on the analysis of the limit connection entering the critical point $P_1$ in the system \eqref{PSsyst1} and coming from the unstable node $Q_2$ inside the invariant plane $\{X=0\}$ and an argument of continuity. Since the analysis of the plane $\{X=0\}$ does not depend on $N$ at all, this proof is completely identical to the one of \cite[Proposition 4.2]{IS20b}, to which we refer the reader.
\end{proof}
\begin{proof}[Proof of Theorem \ref{th.exist}, first part]
Let $A\subseteq(0,\infty)$ be the set of points $v_0\in(0,\infty)$ such that the unique orbit entering the critical point $P(v_0)$ is decreasing and comes from $Q_5$. A standard argument of continuity gives that $A$ is an open set (since $Q_5$ is an unstable node) which contains an interval $(0,V_1)$ according to Proposition \ref{prop.exist1}. Let then $C$ be the set of $v_0\in(0,\infty)$ such that the unique orbit entering $P(v_0)$ comes from the unstable node $Q_2$. A similar argument gives that $C$ is nonempty and open, containing an interval of the form $(V_2,\infty)$ according to Proposition \ref{prop.exist1}. It thus follows that there exists a closed and nonempty set $B=\real\setminus(A\cup C)$ of elements $v_0\in(0,\infty)$ such that the orbit entering $P(v_0)$ comes from a different critical point or from an $\alpha$-limit set. An argument based on the compactness of such set \cite[Theorem 1, Section 3.2]{Pe} together with an analysis done at the level of the profiles completely similar to the one performed at the end of \cite[Section 5]{IS21c} remove this latter possibility. Thus, any orbit entering some critical point $P(v_0)$ with $v_0\in B$ comes from one of the critical points $P_0$, $P_2$ or $Q_1$ and contains good profiles.
\end{proof}

\subsection{Good profiles with interface when $m+p=2$}

This case is more technical, due to the existence of the critical parabola \eqref{crit.par}. However, most of the proofs are similar to the ones for dimension $N=1$, thus we will follow the structure of \cite[Section 3]{IS21b} and only give details at the points where (at least from the technical point of view) the calculations or arguments are influenced by the dimension $N$. The general idea is to analyze the stable manifold for the points $P_0^{\lambda}$ on the parabola \eqref{crit.par}.
\begin{proposition}\label{prop.far}
Let $m$, $p$ and $\sigma$ as in \eqref{range.exp} with $m+p=2$. The orbits entering the critical points $P_0^{\lambda}$ with $\lambda\in(-\beta/\alpha,-\beta/2\alpha)$ come from the critical point $Q_5$ at infinity. They contain profiles $f(\xi)$ that start with a vertical asymptote at $\xi=0$ and are decreasing on their positive part.
\end{proposition}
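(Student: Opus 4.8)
The plan is to analyze the stable manifold of $P_0^{\lambda}$ for $\lambda\in(-\beta/\alpha,-\beta/2\alpha)$ in the system \eqref{PSsyst1} by a backward shooting argument from the interface, exactly in the spirit of \cite[Section 3]{IS21b}, and to track the unique orbit entering $P_0^{\lambda}$ backwards until it is forced to leave toward infinity at the node $Q_5$. The first step is to recall from Lemma \ref{lem.P0l} that for $\lambda$ in this range the point $P_0^{\lambda}$ has a one-dimensional stable manifold, a one-dimensional unstable manifold and a one-dimensional center manifold contained in the critical parabola \eqref{crit.par}; the unstable manifold lies in the invariant plane $\{X=0\}$, which does not correspond to profiles with a positive part, so the only interesting connection is along the unique orbit entering $P_0^{\lambda}$ from the half-space $\{X>0\}$. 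The profile carried by this orbit has an interface at the point $\xi_0\in(0,\xi_{max}]$ determined by $\lambda$ through $Z=-\lambda^2-(\beta/\alpha)\lambda$ and the relation $Z=m\xi^{\sigma-2}/\alpha^2$ valid when $m+p=2$.

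Next I would run this orbit backwards from $P_0^{\lambda}$, i.e. follow it in the direction of decreasing $\xi$. Since $\dot{Z}=(\sigma-2)XZ>0$ on any orbit with $X>0$ (Remark 1 after Lemma \ref{lem.P0l}), $Z$ is strictly decreasing as $\xi\to0$ and the orbit stays in the slab $0<Z<-\lambda^2-(\beta/\alpha)\lambda<\beta^2/4\alpha^2$, in particular it cannot return to the parabola or to $P_0$. I then want to show the orbit stays in the region $\{Y<-\beta/2\alpha\}$ for all backward time, which gives monotonicity: from the definition of $Y$ in \eqref{PSvar1}, $Y<0$ means $f'<0$, so the profile is decreasing on its whole positive part. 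To control the sign of $Y$ one analyzes the plane $\{Y=-\beta/2\alpha\}$: an orbit can cross it from $\{Y>-\beta/2\alpha\}$ only in the region where $\dot{Y}<0$ on this plane, i.e. where $X-NXY-Z-Y^2-(\beta/\alpha)Y$ has the right sign; evaluating at $Y=-\beta/2\alpha$ this becomes a condition $Z\geq X(1+N\beta/2\alpha)+\beta^2/4\alpha^2$, which is \emph{incompatible} with $Z<\beta^2/4\alpha^2$ on the orbit. Hence the orbit, having $Y=\lambda<-\beta/2\alpha$ at $P_0^{\lambda}$, never crosses into $\{Y>-\beta/2\alpha\}$ as we go backward, so it is decreasing and stays in $\{Y<-\beta/2\alpha\}$.

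Finally I would identify the $\alpha$-limit of this backward orbit. Since $X>0$, $Z>0$, $Y<-\beta/2\alpha$ throughout, and both $Z$ and (being a power of $\xi$, up to the reduction one uses — or directly from $\dot{X}=X[(m-1)Y-2X]$ with $Y$ bounded away from $0$ from below) the relevant monotone quantities preclude $\alpha$-limit cycles, the orbit must either converge to a finite critical point with $X>0$ — but the only such point when $m+p=2$ is $P_2$, which is ruled out because its stable manifold lies entirely in $\{Z=0\}$ and only one orbit leaves it, carrying the \eqref{beh.P2} behavior, not a decreasing profile matching this one — or escape to infinity. Among the infinite critical points, $Q_2,Q_3,Q_4$ are excluded ($Q_3$ is a sink, $Q_4$ carries no profiles by Lemma \ref{lem.Q4}, $Q_2$ would force $X\to0$ along $\{X=0\}$ which is the wrong invariant set) and $Q_1$ is excluded because the orbits leaving $Q_1$ start with $Y=1/N>0$ (Lemma \ref{lem.Q1}) whereas our orbit has $Y<-\beta/2\alpha<0$ for all backward time; so the only remaining possibility is the node $Q_5$, whose outgoing orbits carry profiles with a vertical asymptote at $\xi=0$ by \eqref{beh.Q5} (for $N\ge 3$) or \eqref{beh.Q12} (for $N=2$). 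This pins down the connection and yields the claimed behavior. The main obstacle is the careful exclusion of spurious $\alpha$-limit sets — compact invariant sets on the equator other than isolated critical points, and the borderline role of the critical parabola — which is handled by the monotonicity of $Z$ together with the compactness argument of \cite[Theorem 1, Section 3.2]{Pe} as in \cite[Section 3]{IS21b}; since the equations for $\dot{X}$ and $\dot{Z}$ do not depend on $N$, the bulk of this argument transfers verbatim from dimension $N=1$, and only the $Y$-equation analysis above needs the harmless $N$-dependent bookkeeping.
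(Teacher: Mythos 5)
Your proposal is correct and follows essentially the same route as the paper: the key step in both is computing the flow across the plane $\{Y=-\beta/2\alpha\}$, observing that a crossing from $\{Y>-\beta/2\alpha\}$ forces $Z>\beta^2/4\alpha^2$, and combining this with the monotonicity of $Z$ (which ends at $-\lambda^2-\beta\lambda/\alpha<\beta^2/4\alpha^2$) to trap the orbit in $\{Y<-\beta/2\alpha\}$ and conclude it emanates from $Q_5$. Your additional explicit exclusion of the other $\alpha$-limit candidates is detail the paper leaves implicit, but it does not change the argument.
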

\begin{proof}
This is immediate along the lines of the proof of Proposition \ref{prop.exist1}, by calculating the direction of the flow on the plane $\{Y=-\beta/2\alpha\}$ and showing that this plane cannot be crossed when coming from the half-space $\{Y>-\beta/2\alpha\}$ unless when $Z>\beta^2/4\alpha^2$, which is the peak of the parabola \eqref{crit.par}. Since for $m+p=2$, the $Z$ component is non-decreasing along the trajectories, the orbits entering points $P_0^{\lambda}$ with $\lambda\in(-\beta/\alpha,-\beta/2\alpha)$ have to lie forever in the region $\{Y<-\beta/2\alpha\}$ and thus come from $Q_5$.
\end{proof}
We next consider the parabolic cylinder having the parabola \eqref{crit.par} as section, given by
\begin{equation}\label{cyl}
-Y^2-\frac{\beta}{\alpha}Y-Z=0.
\end{equation}
The next technical result shows that the cylinder \eqref{cyl} is a barrier for the trajectories in the phase space associated to the system \eqref{PSsyst1}.
\begin{lemma}\label{lem.cyl}
The trajectories of the system \eqref{PSsyst1} entering one of the points $P_0^{\lambda}$ with $\lambda\in[-\beta/2\alpha,0)$ from the interior of the parabolic cylinder \eqref{cyl} stay forever in the half-space $\{Y<0\}$. The same holds true for the trajectories entering one of the points $P_0^{\lambda}$ with $\lambda\in[-\beta/2\alpha,0)$ from the exterior of the cylinder \eqref{cyl} but which have previously crossed the parabolic cylinder at a point lying in the half-space $\{Z>X\}$. All these orbits go out of the unstable node $Q_5$ and contain decreasing profiles with vertical asymptote.
\end{lemma}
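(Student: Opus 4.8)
The plan is to trace any orbit as in the statement backward in time from its endpoint $P_0^{\lambda}$ and to show that it can never meet the plane $\{Y=0\}$, so that it lies in $\{Y<0\}$ throughout; the local analyses of Sections~\ref{sec.local}--\ref{sec.inf}, together with the monotonicity of $Z$ along the flow when $m+p=2$, then leave $Q_5$ as the only possible $\alpha$-limit. The whole argument rests on two elementary sign computations, which are the only places where the dimension $N$ genuinely enters; everything else runs as in \cite[Section~3]{IS21b}.

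\emph{Two barriers.} Writing $G:=-Y^2-\tfrac{\beta}{\alpha}Y-Z$, so that the interior of the parabolic cylinder \eqref{cyl} is $\{G>0\}$, from $\dot Y=G+X(1-NY)$ and $\dot Z=(\sigma-2)XZ$ one obtains on $\{G=0\}$ the identity $\dot G=-X\,P(Y)$ with $P(Y)=-(2N+\sigma-2)Y^2+\bigl(2-\tfrac{\beta}{\alpha}(N+\sigma-2)\bigr)Y+\tfrac{\beta}{\alpha}$. Since $P(0)=\tfrac{\beta}{\alpha}>0$, $P(-\tfrac{\beta}{2\alpha})=\tfrac{(\sigma-2)\beta^2}{4\alpha^2}>0$ and $P(-\tfrac{\beta}{\alpha})=-N\tfrac{\beta^2}{\alpha^2}-\tfrac{\beta}{\alpha}<0$, the downward parabola $P$ has a unique root $r_1\in(-\tfrac{\beta}{\alpha},-\tfrac{\beta}{2\alpha})$ and $P>0$ on $(r_1,0]$; hence, when $X>0$, the cylinder is crossed from inside to outside (forward) wherever $Y\in(r_1,0)$, and an orbit that is inside near $P_0^{\lambda}$ can leave the interior going backward only through a point with $Y<r_1$. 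Second, on $\{Y=-\tfrac{\beta}{2\alpha}\}$ one computes $\dot Y=\tfrac{\beta^2}{4\alpha^2}+X\bigl(1+\tfrac{N\beta}{2\alpha}\bigr)-Z$, which is strictly positive as soon as $Z<\tfrac{\beta^2}{4\alpha^2}$. Both reduce to the one-dimensional computations of \cite{IS21b} up to the extra terms $NXY$ and $\tfrac{N\beta}{2\alpha}X$.

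\emph{The trapping region.} I claim that once the backward orbit reaches a point with $Y\le-\tfrac{\beta}{2\alpha}$ and $Z<\tfrac{\beta^2}{4\alpha^2}$, it remains in $\{Y<-\tfrac{\beta}{2\alpha}\}\cap\{Z<\tfrac{\beta^2}{4\alpha^2}\}$ forever backward: by Remark~1, $Z$ is non-increasing going backward, so the inequality $Z<\tfrac{\beta^2}{4\alpha^2}$ persists, and the second barrier then prevents $Y$ from returning to $-\tfrac{\beta}{2\alpha}$. In this region there can be no $\alpha$-limit cycle (again because $Z$ is strictly monotone off $\{X=0\}$), and the $\alpha$-limit must reduce to $Q_5$: $Q_1$ is excluded because its outgoing orbits have $Y\to1/N>0$ (Lemma~\ref{lem.Q1}), $Q_2$, $P_2$ and, by \eqref{beh.P0}, also the orbits leaving $P_0$ all have $Y>0$ nearby, no parabola point $P_0^{\mu}$ can serve as the $\alpha$-limit of an orbit in $\{X>0\}$ (Lemma~\ref{lem.P0l} and the analysis of the endpoints of the parabola), $Q_3$ is a sink and $Q_4$ carries no profiles (Lemma~\ref{lem.Q4}); this is verbatim as in \cite{IS21b}. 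The profiles carried by these orbits then satisfy $f'<0$ (from $Y<0$) and $f(\xi)\to\infty$ as $\xi\to0$ (from $X\to\infty$ at $Q_5$), which yields the last assertion.

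\emph{Reaching the trapping region.} Suppose first the orbit enters $P_0^{\lambda}$ from the interior. Near $P_0^{\lambda}$ one is in $\{G>0\}\cap\{Y<0\}$, where $\dot Y=G+X(1-NY)>0$, so $Y$ strictly decreases going backward. Since $\{X=0\}$ and $\{Z=0\}$ are invariant, $Z$ is bounded below by a positive constant along the orbit, and no $P_0^{\mu}$ is an $\alpha$-limit from $\{X>0\}$, the orbit cannot stay inside the cylinder indefinitely; by the first barrier it therefore leaves the interior backward at a point with $Y<r_1<-\tfrac{\beta}{2\alpha}$, where $Z=-Y(Y+\tfrac{\beta}{\alpha})<\tfrac{\beta^2}{4\alpha^2}$, so the trapping region is reached. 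Suppose now the orbit enters $P_0^{\lambda}$ from the exterior but crosses the cylinder at a point $p_*$ with $Z>X$. Then $Z(p_*)>X(p_*)\ge0$ forces $Z(p_*)>0$, hence $Y(p_*)\in(-\tfrac{\beta}{\alpha},0)$; the direction of that crossing together with the first barrier pins $Y(p_*)\in(r_1,0)$, so immediately past $p_*$ (going backward) one is again in $\{G>0\}\cap\{Y<0\}$ and the previous case applies. The step I expect to be the main obstacle is precisely this bookkeeping: ensuring the orbit does not leave the interior "too soon" (while $Y\in(r_1,0)$), excluding backward convergence to a parabola endpoint or to $\{Z=0\}$, and making the case distinction at $p_*$ watertight; the algebra behind the two barriers, by contrast, is short once the substitution $G$ has been made.
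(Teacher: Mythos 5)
Your two barrier computations are correct, and the first one is precisely the paper's key step: your polynomial $-P(Y)$ coincides with the function $h(Y)$ that the paper computes as the sign of the flow on the cylinder \eqref{cyl}, and your sign analysis (a root $r_1\in(-\beta/\alpha,-\beta/2\alpha)$ with $P>0$ on $(r_1,0]$) refines the paper's observation that the cylinder cannot be crossed inward on its right half. The backward-tracing scheme (interior, then exit at $Y\le r_1$, then the trapping region $\{Y<-\beta/2\alpha\}\cap\{Z<\beta^2/4\alpha^2\}$, then $\alpha$-limit $Q_5$) is also sound, and your exclusion of the other possible $\alpha$-limits is acceptable at the level of detail the paper itself gives.

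There is, however, a genuine gap in your second case, located exactly where you yourself underuse the hypothesis. You invoke $Z(p_*)>X(p_*)$ only to conclude $Z(p_*)>0$, and you then control only the \emph{backward} arc from $p_*$. But the lemma asserts that the whole trajectory stays in $\{Y<0\}$ (this is what makes the profile decreasing), and in the second case the \emph{forward} arc from $p_*$ to $P_0^{\lambda}$ lies in the exterior of the cylinder, where nothing in your argument prevents $Y$ from becoming positive: on $\{Y=0\}$ one has $\dot Y=X-Z$, which is positive wherever $X>Z$, so an exterior arc with $X$ large could climb into $\{Y>0\}$ and still return to $P_0^{\lambda}$ later. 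The actual role of the hypothesis $Z>X$ --- and of the plane $\{X=Z\}$ whose flow the paper explicitly says carries ``the rest of the proof'' --- is to rule this out: on $\{X=Z\}$ one computes $\dot Z-\dot X=X\bigl[\sigma X-(m-1)Y\bigr]>0$ for $X>0$ and $Y\le0$, so the region $\{Y<0\}\cap\{Z>X\}$ is forward invariant; since $p_*$ lies in it (being on the cylinder with $Z(p_*)>0$ forces $Y(p_*)\in(-\beta/\alpha,0)$), the forward arc can never reach $\{Y=0\}$. Without this step your case 2 establishes that the orbit emanates from $Q_5$, but not the assertions ``stays forever in $\{Y<0\}$'' and ``decreasing profiles''. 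Adding the two one-line computations above (flow on $\{Y=0\}$ and on $\{X=Z\}$) closes the gap and brings your argument in line with the paper's.
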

\begin{proof}
The proof is very similar to the one of \cite[Lemma 3.2]{IS21b}. The main argument in it, and the only one that might depend on the dimension $N$, relies on the direction of the flow of the system on the parabolic cylinder \eqref{cyl}. The normal direction to it is given by $\overline{n}=(0,-2Y-\beta/\alpha,-1)$ and the direction of the flow of the system \eqref{PSsyst1} on the cylinder depends on the sign of the expression
\begin{equation*}
\begin{split}
G(X,Y)&=\left(-2Y-\frac{\beta}{\alpha}\right)X(1-NY)+(\sigma-2)X\left(Y^2+\frac{\beta}{\alpha}Y\right)\\
&=X\left[(\sigma-2)Y^2+(\sigma-2)\frac{\beta}{\alpha}Y-2Y-\frac{\beta}{\alpha}+2NY^2+\frac{N\beta}{\alpha}Y\right]\\
&=X\left[(\sigma+2N-2)Y^2+\left((\sigma-2+N)\frac{\beta}{\alpha}-2\right)Y-\frac{\beta}{\alpha}\right]=Xh(Y)
\end{split}
\end{equation*}
where
$$
h(Y)=Y\left[(\sigma+2(N-1))Y+(\sigma-2+N)\frac{\beta}{\alpha}\right]-\left(2Y+\frac{\beta}{\alpha}\right).
$$
Since we are only interested on the right half of the cylinder, that is, with $Y\geq-\beta/2\alpha$, we get that
$$
(\sigma+2(N-1))Y+(\sigma-2+N)\frac{\beta}{\alpha}\geq\left[-\sigma-2(N-1)+2\sigma+2N-4\right]\frac{\beta}{2\alpha}=(\sigma-2)\frac{\beta}{2\alpha}>0,
$$
recalling that \eqref{range.exp} implies $\sigma>2$ when $m+p=2$. Since $2Y+\beta/\alpha\geq0$, it follows that $h(Y)<0$ for any $Y\in[-\beta/2\alpha,0)$. We thus obtain that the parabolic cylinder \eqref{cyl} cannot be crossed from right to left (in terms of $Y$). Since the flow on the plane $\{X=Z\}$ is not influenced by $N$, the rest of the proof is straightforward, on the lines of the proof of \cite[Lemma 3.2]{IS21b}.
\end{proof}
Using this lemma, one can show that there are good profiles entering the critical points $P_0^{\lambda}$. A special treatment is needed for the peak of the parabola, thus it will be left aside in the first result.
\begin{proposition}\label{prop.right}
Let $m$, $p$ and $\sigma$ as in \eqref{range.exp} and such that $m+p=2$. For any $\lambda\in(-\beta/2\alpha,0)$ there exists at least an orbit entering the critical point $P_0^{\lambda}$ and containing good profiles with interface.
\end{proposition}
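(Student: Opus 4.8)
The plan is to fix $\lambda\in(-\beta/2\alpha,0)$ and analyze the two-dimensional stable manifold of $P_0^{\lambda}$ furnished by Lemma \ref{lem.P0l}. Since the orbits are one-dimensional, this stable manifold carries a one-parameter family of orbits entering $P_0^{\lambda}$ transversally to the critical parabola \eqref{crit.par}; I will parametrize this family by the direction of entrance inside the stable eigenspace of $P_0^{\lambda}$, equivalently by the point where each orbit meets a small transversal disk centered at $P_0^{\lambda}$. Along every such orbit $Z$ is non-decreasing (since $\dot Z=(\sigma-2)XZ\ge0$ when $m+p=2$) and equals $z_{\lambda}:=-\lambda^2-\frac{\beta}{\alpha}\lambda\in(0,\beta^2/4\alpha^2)$ at $P_0^{\lambda}$, so each orbit obeys $0\le Z<z_{\lambda}$ along its whole backward trajectory; combined with the Poincaré compactification of Section \ref{sec.inf}, this confines the orbit to a bounded region of the hypersphere and, since $\dot Z>0$ wherever $X>0$, forbids $\alpha$-limit cycles in $\{X>0\}$.

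I would then split the parameter interval into three sets. Let $\mathcal{A}$ collect the parameters whose orbit comes out of the node $Q_5$: by Lemma \ref{lem.cyl}, the orbits entering $P_0^{\lambda}$ from the interior of the parabolic cylinder \eqref{cyl}, or having previously crossed it at a point of $\{Z>X\}$, stay forever in $\{Y<0\}$, have $Q_5$ as $\alpha$-limit, and contain decreasing profiles with a vertical asymptote at $\xi=0$ --- in particular, not good profiles. A local computation near $P_0^{\lambda}$ of the same nature as in \cite[Lemma 3.2]{IS21b} shows that a portion of the stable eigenspace points into that region, so $\mathcal{A}\ne\emptyset$; and $\mathcal{A}$ is open by continuous dependence of orbits on data together with the fact that $Q_5$ is an unstable node for $N\ge3$ (for $N=2$ one uses instead that $Q_5=Q_1$ is a saddle-node whose relevant exit orbits still form an open set, cf. Lemma \ref{lem.Q15N2}). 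Let $\mathcal{C}$ collect the parameters whose orbit comes out of the node $Q_2$; these orbits contain profiles vanishing with positive flux $(f^m)'>0$ at some $\xi_0\in(0,\infty)$ strictly smaller than the interface point of $P_0^{\lambda}$, hence again not good profiles with interface. To see $\mathcal{C}\ne\emptyset$ I would study the invariant plane $\{X=0\}$, where for $m+p=2$ one has $\dot Z=0$, so $\{X=0\}$ is foliated by the invariant lines $\{X=0,\ Z=c\}$, on each of which $\dot Y=-Y^2-\frac{\beta}{\alpha}Y-c$ connects the two parabola points $P_0^{\lambda_-(c)}$ and $P_0^{\lambda_+(c)}$ with $\lambda_-(c)<\lambda_+(c)$. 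Since the equations for $\dot Y$ and $\dot Z$ restricted to $\{X=0\}$ and the connection entering it from $Q_2$ do not depend on $N$, the limiting connection $Q_2\to(\text{left half of the parabola})\to P_0^{\lambda}$ inside $\{X=0\}$ exists exactly as in \cite[Section 3]{IS21b}, and a continuity argument then produces a nonempty open set of orbits in $\{X>0\}$ still coming from $Q_2$.

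Finally I would run the topological three-sets argument: the parameter interval is connected, $\mathcal{A}$ and $\mathcal{C}$ are disjoint, nonempty and open, so the complement $\mathcal{B}$ is nonempty and closed. For a parameter in $\mathcal{B}$, a compactness argument (\cite[Theorem 1, Section 3.2]{Pe}) together with the monotonicity of $Z$ reduces the $\alpha$-limit of the orbit to a single critical point; inspecting the critical points admitting an unstable direction into the region $\{X>0,\ 0\le Z<z_{\lambda}\}$ --- excluding $Q_5$ and $Q_2$, discarding $Q_4$ by Lemma \ref{lem.Q4}, using that $Q_3$ is a stable node (Lemma \ref{lem.Q23}) and that, by Lemma \ref{lem.P0l}, no parabola point other than the endpoint $P_0$ has an unstable direction off $\{X=0\}$ --- one is left with $P_0$, $P_2$ or $Q_1$. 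By Lemmas \ref{lem.P0mp2}, \ref{lem.P2}, \ref{lem.Q1} and \ref{lem.Q15N2}, the profile carried by such an orbit behaves as in \eqref{beh.P0}, \eqref{beh.P2} or \eqref{beh.Q1} as $\xi\to0$, so it is a good profile, and it has an interface at the point $\xi_0$ determined by $z_{\lambda}=m\xi_0^{\sigma-2}/\alpha^2$. This establishes the proposition for every $\lambda\in(-\beta/2\alpha,0)$.

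The step I expect to be the main obstacle is the nonemptiness of $\mathcal{C}$: one must control the limiting behavior of the stable manifold of $P_0^{\lambda}$ as the entrance direction degenerates toward the invariant plane $\{X=0\}$, and then follow the resulting connection through the left half $\{P_0^{\lambda}:\lambda\in(-\beta/\alpha,-\beta/2\alpha)\}$ of the critical parabola, whose points carry a mixed stable/unstable/center structure according to Lemma \ref{lem.P0l}. This is precisely where the argument borrows the one-dimensional analysis of \cite{IS21b} and leans on the monotonicity of $Z$, which keeps every interface point below $\xi_{max}$ and hence the relevant part of the phase space compact. A milder difficulty, already resolved by Lemmas \ref{lem.Q1} and \ref{lem.Q15N2}, is that $Q_1$ is non-hyperbolic when $N=2$ (a saddle-node), so identifying the behavior \eqref{beh.Q1} of the limiting profiles there requires the saddle-node analysis rather than a direct linearization.
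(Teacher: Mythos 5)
Your proposal is correct and follows essentially the same route as the paper: a one-parameter family of orbits on the stable manifold of $P_0^{\lambda}$ (which the paper realizes explicitly via the local expansion \eqref{interm13} with parameter $K$, where you parametrize abstractly by entrance direction), nonemptiness and openness of the $Q_5$-set via Lemma \ref{lem.cyl} and of the $Q_2$-set via a tubular neighborhood of the connection inside $\{X=0\}$, and the three-sets argument with $\alpha$-limit sets excluded by compactness plus the monotonicity of $Z$. The only slip is descriptive: on the invariant line $\{X=0,\ Z=z_{\lambda}\}$ the orbit from $Q_2$ descends directly to $P_0^{\lambda}$ with $\lambda\in(-\beta/2\alpha,0)$ and does not pass through the left half of the parabola, but this does not affect the argument.
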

\begin{proof}[Sketch of the proof]
The proof follows the same scheme as in dimension $N=1$ \cite[Proposition 3.3]{IS21b}, thus we will only sketch its ideas and give in detail the calculations where dimension $N$ appears. In a first step, we want to characterize locally the stable manifold of $P_0^{\lambda}$ through an approximation of its trajectories by a one-parameter family. To this end, we translate $P_0^{\lambda}$ to the origin by setting
\begin{equation}\label{PSvar1.bis}
Y_1=Y-\lambda, \qquad Z_1=Z+\lambda^2+\frac{\beta}{\alpha}\lambda,
\end{equation}
and obtain the system
\begin{equation}\label{PSsyst1.bis}
\left\{\begin{array}{ll}\dot{X}=(m-1)\lambda X+(m-1)XY_1-2X^2,\\
\dot{Y}_1=-Y_1^2-\frac{2\alpha\lambda+\beta}{\alpha}Y_1+(1-N\lambda)X-NXY_1-Z_1,\\
\dot{Z}_1=-\frac{(\sigma-2)\lambda(\alpha\lambda+\beta)}{\alpha}X+(\sigma-2)XZ_1,\end{array}\right.
\end{equation}
where our critical point becomes $(X,Y_1,Z_1)=(0,0,0)$. This system can be approximated in first order, in a neighborhood of the origin, as it follows:
\begin{equation}\label{interm11}
\frac{dZ_1}{dX}\sim-\frac{(\sigma-2)(\alpha\lambda+\beta)}{(m-1)\alpha}, \qquad {\rm that \ is} \ Z_1=-\frac{(\sigma-2)(\alpha\lambda+\beta)}{(m-1)\alpha}X+o(|X|),
\end{equation}
an approximation we can plug in the system \eqref{PSsyst1.bis} to get a reduced system in $(X,Y_1)$
\begin{equation}\label{interm12}
\left\{\begin{array}{ll}\dot{X}=(m-1)\lambda X+O(|(X,Y_1)|^2),\\
\dot{Y}_1=-\frac{2\alpha\lambda+\beta}{\alpha}Y_1+\left(1-N\lambda+\frac{(\sigma-2)(\alpha\lambda+\beta)}{(m-1)\alpha}\right)X+O(|(X,Y_1)|^2),\end{array}\right.
\end{equation}
By similar arguments as in the proof of \cite[Proposition 3.3]{IS21b}, based on an application of the Hartman-Grobman Theorem, we infer that the trajectories of the system \eqref{PSsyst1.bis} entering the origin are tangent to the following one-parameter family of trajectories obtained by integrating the approximating linear system \eqref{interm12}
\begin{equation}\label{interm13}
Y_1=KX^{-\frac{2}{m-1}-\frac{2}{(\sigma+2)\lambda}}-\frac{(\sigma+2)(mN-\sigma-N+2)\lambda-(3\sigma-2)(m-1)}{(m-1)[(\sigma+2)(m+1)\lambda+2(m-1)]}X,
\end{equation}
with $K\in\real$ arbitrary.

\medskip

The next step is to prove that, on the one hand, there exists $K_1\in\real$ such that the orbits entering $P_0^{\lambda}$ tangent to the trajectories \eqref{interm13} with $K\in(-\infty,K_1)$ remain forever in the half-space $\{Y<0\}$ (and consequently come from the unstable node $Q_5$), and on the other hand there exists $K_2\in\real$ such that the orbits entering $P_0^{\lambda}$ tangent to the trajectories \eqref{interm13} with $K\in(K_2,\infty)$ come from the unstable node $Q_2$. For the former of these assertions we make strong use of Lemma \ref{lem.cyl} by showing that the orbits entering tangent to trajectories with $K$ very negative have to pass through the interior of the parabolic cylinder \eqref{cyl}, while the latter of these assertions is based on a "tubular neighborhood construction" near the orbit connecting $Q_2$ to the critical point $P_0^{\lambda}$ inside the invariant plane $\{X=0\}$ (a plane that is totally the same as in dimension $N=1$). The rigorous constructions are done in great detail in Steps 3, respectively 4, of the proof of \cite[Proposition 3.3]{IS21b}, to which we refer in order to keep the presentation sufficiently short.

\medskip

Showing that there are orbits entering $P_0^{\lambda}$ containing good profiles (that is, coming out of one of the critical points $P_0$, $P_2$ or $Q_1$) is now an immediate application of the "three sets argument", since the intervals of $K$ in the trajectories tangent to the ones in \eqref{interm13} and coming from the unstable nodes $Q_5$ and $Q_2$ are two open sets. Taking $\overline{K}\in\real$ to be the supremum of all $K\in\real$ such that the trajectory tangent to the one in \eqref{interm13} for such $K$ comes from $Q_5$, we readily observe that the trajectory tangent to \eqref{interm13} for $K=\overline{K}$ does not come either from $Q_5$ or $Q_2$. It thus follows that this orbit begins either in one of the remaining critical points (all of them "good") or in an $\alpha$-limit set. But the possibility of the $\alpha$-limit set is impossible, as it follows from combining its compactness given in \cite[Theorem 1, Section 3.2]{Pe} with the strict monotonicity of the $Z$-component in the system \eqref{PSsyst1} outside the invariant planes $\{X=0\}$ and $\{Z=0\}$. It thus follows that there exists at least one parameter $\overline{K}\in\real$ such that the trajectory entering $P_0^{\lambda}$ corresponding to $K=\overline{K}$ in \eqref{interm13} comes from one of the critical points $P_0$, $P_2$ or $Q_1$, thus it contains good profiles with one of the possible behaviors \eqref{beh.P0}, \eqref{beh.P2} or \eqref{beh.Q1}.
\end{proof}
We end this section with the analysis, based on the same ideas but with further technical complications, of the center-stable manifold near the peak of the parabola, the point $P_0^{\lambda}$ with $\lambda=-\beta/2\alpha$.
\begin{proposition}\label{prop.peak}
Let $m$, $p$ and $\sigma$ as in \eqref{range.exp} and such that $m+p=2$. There exists at least an orbit entering the critical point $P_0^{-\beta/2\alpha}$ and containing good profiles with interface.
\end{proposition}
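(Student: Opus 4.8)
The plan is to follow the scheme of the proof of Proposition \ref{prop.right}, the only genuine novelty being the degeneracy of the point $\lambda=-\beta/2\alpha$: by Lemma \ref{lem.P0l} the eigenvalues of $M(P_0^{-\beta/2\alpha})$ are $l_1=-(m-1)\beta/2\alpha<0$ and $l_2=l_3=0$, so $P_0^{-\beta/2\alpha}$ possesses only a one-dimensional stable manifold and a two-dimensional center-stable manifold (one center direction being tangent to the critical parabola \eqref{crit.par} itself). In particular the linear part is too degenerate to separate the orbits entering the peak, and the step that in Proposition \ref{prop.right} produced the one-parameter family \eqref{interm13} via the Hartman--Grobman Theorem must here be replaced by a center manifold computation. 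It is worth noting that the degeneracy is already visible in \eqref{interm13}: for $m+p=2$ one has $\beta/\alpha=2(m-1)/(\sigma+2)$, hence $\lambda=-\beta/2\alpha=-(m-1)/(\sigma+2)$, and the exponent $-\tfrac{2}{m-1}-\tfrac{2}{(\sigma+2)\lambda}$ there vanishes exactly at $\lambda=-\beta/2\alpha$, so the two power laws in \eqref{interm13} coalesce.

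First I would translate the peak to the origin via \eqref{PSvar1.bis} with $\lambda=-\beta/2\alpha$, i.e. work with the system \eqref{PSsyst1.bis} for this value of $\lambda$. The reduction \eqref{interm11} is still available, since it only used $l_1\neq0$ and the invariance of the line $\{X=0,\,Y_1=0\}$, and after inserting it one is left with a planar system whose linear part now has eigenvalues $l_1<0$ and $0$. Following Step 2 of the peak analysis in \cite[Proposition 3.4]{IS21b}, the next step would be to extract the leading nonlinear (quadratic) terms governing the flow on the two-dimensional center manifold of this planar system and integrate the resulting reduced equation; this yields the correct one-parameter family of approximating trajectories entering $P_0^{-\beta/2\alpha}$ — the analogue of \eqref{interm13}, with the coalesced power laws replaced by a correspondingly more degenerate (square-root type) behavior — the dimension $N$ entering only through explicit constants, as the $\dot{X}$ and $\dot{Z}$ equations of \eqref{PSsyst1} do not depend on $N$.

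With this family at hand, the rest is as in Proposition \ref{prop.right}. On one side, the orbits entering $P_0^{-\beta/2\alpha}$ tangent to the members of the family with parameter $K$ very negative are forced to cross into the interior of the parabolic cylinder \eqref{cyl}; since Lemma \ref{lem.cyl} covers the value $\lambda=-\beta/2\alpha$, these orbits then stay in $\{Y<0\}$ forever and therefore come from the unstable node $Q_5$, carrying decreasing profiles with vertical asymptote. On the other side, the "tubular neighborhood construction" around the heteroclinic connection $Q_2\to P_0^{-\beta/2\alpha}$ inside the invariant plane $\{X=0\}$ — a plane and a connection identical to those in dimension $N=1$ — shows that the orbits tangent to members of the family with $K$ large come from $Q_2$. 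The admissible sets of $K$ in these two cases are open and disjoint; taking $\overline{K}$ to be the supremum of the $K$'s giving a connection from $Q_5$, the orbit entering $P_0^{-\beta/2\alpha}$ for $K=\overline{K}$ comes neither from $Q_5$ nor from $Q_2$, and, ruling out an $\alpha$-limit set by combining its compactness \cite[Theorem 1, Section 3.2]{Pe} with the strict monotonicity of the $Z$-component outside the planes $\{X=0\}$ and $\{Z=0\}$, it must emanate from one of $P_0$, $P_2$ or $Q_1$; hence it contains a good profile with interface at $\xi=\xi_{max}$.

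The main obstacle will be precisely the degenerate center manifold analysis at the peak: identifying the correct leading nonlinear terms and the associated one-parameter family on the two-dimensional center-stable manifold, and then checking that the very-negative-$K$ members of that family are still confined by the cylinder \eqref{cyl}, which at $\lambda=-\beta/2\alpha$ is tangent to the plane $\{Y=-\beta/2\alpha\}$ that the $Q_5$-orbits travel along. Once this is settled, every remaining step is, up to keeping track of the $N$-dependent constants, identical to its one-dimensional counterpart in \cite[Section 3]{IS21b}.
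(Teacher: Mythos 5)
Your sketch follows essentially the same route as the paper: translate the peak to the origin via \eqref{PSvar1.bis}, replace the Hartman--Grobman step of Proposition \ref{prop.right} by a degenerate center-manifold computation producing a one-parameter family of approximating trajectories, and then run the identical cylinder-barrier, tubular-neighborhood and three-sets arguments. The only point where your guess diverges from the actual computation is the shape of that family: after a further linear change \eqref{PSvar2} putting the linearization into Jordan form, the paper (following \cite[Proposition 3.4]{IS21b}) obtains the exponentially flat family $X_2=K\exp\left(-\frac{(m-1)^2\beta^2}{2\alpha Y_2}\right)+o\left(\exp\left(-\frac{(m-1)^2\beta^2}{2\alpha Y_2}\right)\right)$ with $K\geq0$, rather than a square-root type law, though this discrepancy does not affect the remainder of your argument.
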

\begin{proof}[Sketch of the proof]
The proof follows the same ideas and plans as the one of the previous proposition. In fact, the theoretical arguments leading to the existence of a good orbit are exactly the same as in the proof of Proposition \ref{prop.right}, the main technical difference being done in the step of obtaining the approximate local development of the manifold as a one-parameter family of trajectories, which is more involved in this case. To this end, we let again the translation \eqref{PSvar1.bis} leading to the system \eqref{PSsyst1.bis}, with the very important difference that the linear term in the equation for $\dot{Y}_1$ vanishes since $2\alpha\lambda+\beta=0$. Setting also $X_1=X$ to unify notation, we need a further (linear) change of variables which is required in order to put the matrix of the linearization of the system \eqref{PSvar1.bis} into a normal (that is, Jordan) form. To this end, we set
\begin{equation}\label{PSvar2}
X_2=X_1, \ \ Y_2=CX_1+DY_1, \ \ Z_2=AX_1+BZ_1,
\end{equation}
where the coefficients are given by
$$
A=\frac{(\sigma-2)\beta^2}{\alpha^2}, \ B=\frac{2\beta(m-1)}{\alpha}, \ C=\frac{2\alpha(m-1)+\beta((m-1)N+\sigma-2)}{m-1}, \ D = (m-1)\beta.
$$
The new system in variables $(X_2,Y_2,Z_2)$ reads
\begin{equation}\label{PSsyst2}
\left\{\begin{array}{ll}\dot{X}_2=-\frac{(m-1)\beta}{2\alpha}X_2-\frac{2\alpha(m-1)+\beta((m-1)(N+2)+\sigma-2)}{\beta(m-1)}X_2^2+\frac{1}{\beta}X_2Y_2,\\
\dot{Y}_2=-\frac{\alpha}{2}Z_2-\frac{1}{(m-1)\beta}Y_2^2+EX_2Y_2-FX_2^2,\\
\dot{Z}_2=(\sigma-2)\left[\frac{\beta}{\alpha^2}X_2Y_2+X_2Z_2-\frac{\beta(2\alpha(m-1)+\beta((m-1)N+m\sigma-2))}{(m-1)\alpha^2}X_2^2\right],\end{array}\right.
\end{equation}
with coefficients
\begin{equation*}
\begin{split}
&E=\frac{2\alpha m^2+\beta\sigma(m+1)+2N\beta(m-1)-2(\alpha+\beta)}{(m-1)^2\beta}, \\ &F=\frac{[2\alpha m(m-1)+\beta((m-1)N+m\sigma+2(m^2-3m+1))][(2\alpha+N\beta)(m-1)+\beta(\sigma-2)]}{(m-1)^3\beta}.
\end{split}
\end{equation*}
The good news is that the complicated coefficients $E$ and $F$ will not be dominating. Indeed, going back to the local linear dependence \eqref{interm11} and setting there $\lambda=-\beta/2\alpha$, we readily get that in a neighborhood of the origin of the system \eqref{PSsyst1.bis} the following approximation holds true
$$
Z_1=-\frac{\sigma-2}{\sigma+2}X_1+o(X_1),
$$
whence we deduce that $Z_2=o(X_2)$ by replacing $Z_1$, $X_1$ as in \eqref{PSvar2} and taking into account the precise values of $A$ and $B$. Thus, in a similar manner as in dimension $N=1$ we get the same local approximation for the center manifolds near the critical point $P_0^{-\beta/2\alpha}$, namely
\begin{equation}\label{center.man2}
X_2=K\exp\left(-\frac{(m-1)^2\beta^2}{2\alpha Y_2}\right)+o\left(\exp\left(-\frac{(m-1)^2\beta^2}{2\alpha Y_2}\right)\right), \ K\geq0.
\end{equation}
The detailed calculations leading to the local asymptotic development \eqref{center.man2} are given in the proof of \cite[Proposition 3.4]{IS21b}, also showing there how the terms with coefficients $E$ and $F$ are negligible in the calculations coming from the Local Center Manifold Theorem.

\medskip

Once obtained the local approximation of the trajectories on the center-stable manifold of $P_0^{-\beta/2\alpha}$ in the form of a one-parameter family of approximating orbits, the rest of the proof follows completely similar lines to the final (and already standard) steps in the proof of Proposition \ref{prop.right} (see also \cite[Proposition 3.4]{IS21b} for more details).
\end{proof}
Theorem \ref{th.exist} in the case $m+p=2$ is now a simple consequence of the two Propositions \ref{prop.right} and \ref{prop.peak}. Finally, if $m+p<2$ it is quite easy to see that there are no longer interface behaviors, as done in \cite[Section 7]{IS20b}, either on our system or on some equivalent one, thus no good profiles with interface may exist.

\section{Classification of profiles for $\sigma$ small. Proof of Theorem \ref{th.small}}\label{sec.small}

This section is devoted to the proof of Theorem \ref{th.small} both when $m+p>2$ and when $m+p=2$. Let us first notice that \textbf{Part 1} in Theorem \ref{th.small} is already proved by the existence of the elliptic sector in Lemma \ref{lem.P0} for $m+p>2$ and Lemma \ref{lem.P0mp2} for $m+p=2$. The core of the rest of the arguments is to control the orbits going out of the critical point $P_2$, thus we begin with a very useful lemma gathering several facts related to these trajectories.
\begin{lemma}\label{lem.monot}
Let $m$, $p$ and $\sigma$ be as in \eqref{range.exp}. Then at any point lying on the orbit going out of $P_2$ we have $X<X(P_2)$, $Y<Y(P_2)$. Moreover, if $\sigma\geq2$, the component $X$ is decreasing and the component $Y$ is also decreasing in the half-space $\{Y\geq0\}$ along the trajectory going out of the point $P_2$.
\end{lemma}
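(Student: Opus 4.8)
The plan is to obtain both assertions by building invariant regions in the phase space for the unique orbit $\gamma$ going out of $P_2$ (the one tangent to the eigenvector $e_3(\sigma)$ from \eqref{interm4}), exploiting the explicit coordinates of $P_2$ and the sign $D(\sigma)<0$ in \eqref{interm5}. The two algebraic identities that get used repeatedly are those expressing that $P_2$ is a critical point: $2X(P_2)=(m-1)Y(P_2)$ (from $\dot X=0$) and $-Y(P_2)^2-\tfrac{\beta}{\alpha}Y(P_2)+X(P_2)(1-NY(P_2))=0$ (from $\dot Y=0$).

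For the inclusion $X<X(P_2)$, $Y<Y(P_2)$ I would use the two coordinate planes through $P_2$. On $\{X=X(P_2)\}$ the first identity gives $\dot X=(m-1)X(P_2)(Y-Y(P_2))$, which is negative exactly when $Y<Y(P_2)$; on $\{Y=Y(P_2)\}$ the second identity gives $\dot Y=(1-NY(P_2))(X-X(P_2))-Z$, and a two-line computation shows $1-NY(P_2)>0$ (it amounts to $\sigma+mN+2>Np$, which holds since $p<1<m$), so $\dot Y<0$ whenever $X\le X(P_2)$ and $Z>0$. Since $\{Z=0\}$ is invariant, the box $R=\{X<X(P_2)\}\cap\{Y<Y(P_2)\}\cap\{Z>0\}$ is positively invariant; and $\gamma$ enters $R$ as it leaves $P_2$ because, by \eqref{interm4}--\eqref{interm5}, the $X$-component of $e_3(\sigma)$ is negative (numerator $2\alpha(m-1)^2(mN-N+1)>0$, denominator $D(\sigma)<0$) and its $Y$-component is negative (since $(m-1)\sigma+2(m+p-2)>2(m-1)>0$ by \eqref{range.exp}). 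A standard first-exit-time argument then finishes it; the only subtle point is the edge $\{X=X(P_2),Y=Y(P_2)\}$, where $\dot X=0$ but $\ddot X=-(m-1)X(P_2)Z<0$, so $X$ cannot increase past $X(P_2)$ there.

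For the second assertion, write $g:=(m-1)Y-2X$, so that $\dot X=Xg$ and ``$X$ decreasing'' is equivalent to ``$g<0$ along $\gamma$''. From \eqref{interm4}--\eqref{interm5} one checks that $\gamma$ leaves $P_2$ into $\{g<0\}$, since $(m-1)(e_3)_Y-2(e_3)_X$ equals $\tfrac{2\alpha(m-1)}{D(\sigma)}\,[(mN-N+2)((m-1)\sigma-2(1-p))+2(m-1)]<0$. The plan is then a combined invariant-region argument: show $\gamma$ stays forever in $\mathcal R:=\{X<X(P_2),\ Y<Y(P_2),\ g<0,\ Z>0\}$, which yields at once $\dot X=Xg<0$ and, through the analysis of $\dot Y$ on $\mathcal R$ below, $\dot Y<0$ whenever $Y\ge0$. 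Besides the faces already handled in the first assertion and the invariant plane $\{Z=0\}$, the only face of $\partial\mathcal R$ to control is $\{g=0\}$: there $\dot g=(m-1)\dot Y$, and $g=0$ with $X>0$ forces $Y=2X/(m-1)>0$ and $Y<Y(P_2)$, so one needs exactly $\dot Y<0$ on $\{g=0\}\cap\{0<Y<Y(P_2)\}$ to prevent a crossing.

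Hence everything reduces to proving $\dot Y<0$ on $\mathcal R\cap\{Y\ge0\}$. Rewriting $\dot Y=-Y^2-\tfrac{\beta}{\alpha}Y+X(1-NY)-Z$, in $\{Y\ge1/N\}$ every term is nonpositive and $-Z<0$, so the work is confined to the strip $\{0\le Y<1/N\}$, which (since $Y(P_2)<1/N$) is where $\gamma$ starts and where $X(1-NY)>0$. Subtracting the identity $-Y(P_2)^2-\tfrac{\beta}{\alpha}Y(P_2)+X(P_2)(1-NY(P_2))=0$, the inequality $\dot Y<0$ turns into
$$
Z>(X-X(P_2))(1-NY)-(Y-Y(P_2))\bigl(Y+Y(P_2)+\tfrac{\beta}{\alpha}+NX(P_2)\bigr),
$$
a lower bound on $Z$ of definite, controlled form (on $\mathcal R\cap\{0\le Y<1/N\}$ the right-hand side is bounded). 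This is precisely where the hypothesis $\sigma\ge2$ enters: on $\mathcal R\cap\{Y\ge0\}$ one has $g\le0$, hence $X\ge(m-1)Y/2$, hence $\dot Z=Z[(m+p-2)Y+(\sigma-2)X]\ge\tfrac{1}{2}Y[(m-1)\sigma-2(1-p)]\,Z\ge0$, so $Z$ is nondecreasing along $\gamma$ in that region; comparing the growth of $Z$ with the displacement of $(X,Y)$ away from $P_2$, via an auxiliary differential inequality for $Z$ of the type used for $N=1$ in \cite{IS20b}, then delivers exactly the bound above. I expect this last step --- reconciling the fact that $Z$ is tiny near $P_2$ with the requirement that it dominate $X(1-NY)$ throughout the strip --- to be the main obstacle; the rest is bookkeeping parallel to the one-dimensional arguments of \cite{IS20b}.
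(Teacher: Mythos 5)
Your treatment of the first assertion is correct and is essentially the paper's own argument: the same two planes $\{X=X(P_2)\}$ and $\{Y=Y(P_2)\}$, the same inequality $1-NY(P_2)>0$ (the paper records it as \eqref{intermX}), and the same use of $e_3(\sigma)$ with $D(\sigma)<0$ to check that the orbit enters the invariant box. No issue there.

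The second assertion is where the proposal has a genuine gap. Your whole scheme hinges on showing $\dot Y<0$ at every point of $\mathcal R\cap\{Y\ge 0\}$ visited by the orbit (both to seal the face $\{g=0\}$ and to conclude that $Y$ decreases), and you correctly reduce this to the lower bound
$$
Z>(X-X(P_2))(1-NY)-(Y-Y(P_2))\Bigl(Y+Y(P_2)+\tfrac{\beta}{\alpha}+NX(P_2)\Bigr).
$$
But this is not a pointwise consequence of lying in $\mathcal R$: the right-hand side is first order in $(X-X(P_2),Y-Y(P_2))$ and is genuinely positive on part of the region --- on $\{g=0\}$, for instance, it equals $(Y-Y(P_2))$ times a bracket whose sign depends on $m,p,N,\sigma$ --- while $Z$ vanishes at $P_2$ and is only known to be nondecreasing. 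Whether the bound holds therefore depends on how fast $Z$ grows along the orbit relative to the drift of $(X,Y)$ away from $P_2$; that is a statement about the orbit, not about the region, and the ``auxiliary differential inequality'' that is supposed to deliver it is never exhibited. You flag this yourself as the main obstacle, and as written it is a missing step, not bookkeeping. The paper sidesteps the quantitative comparison entirely with a softer first-crossing argument: if $X$ first stops decreasing at $\eta_1$, then $X''(\eta_1)=(m-1)X(\eta_1)\dot Y(\eta_1)\ge 0$ forces $\dot Y(\eta_1)\ge 0$, hence an earlier first point $\eta_2\le\eta_1$ with $\dot Y(\eta_2)=0$ and $Y''(\eta_2)\ge 0$; but differentiating the second equation of \eqref{PSsyst1} at $\eta_2$ gives $Y''(\eta_2)=\dot X(\eta_2)(1-NY(\eta_2))-\dot Z(\eta_2)<0$ (and $Y''(\eta_2)=-\dot Z(\eta_2)<0$ if $\eta_2=\eta_1$), using only $Y\le Y(P_2)<1/N$ and the sign of $\dot Z$ for $\sigma\ge 2$. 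That contradiction argument needs no lower bound on $Z$ at all, and is the idea your proposal is missing.
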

\begin{proof}
We start from a very useful inequality
\begin{equation}\label{intermX}
\frac{1}{N}-Y(P_2)=\frac{1}{N}-\frac{1}{\alpha(mN-N+2)}=\frac{2(N(m-p)+\sigma+2)}{N(\sigma+2)(mN-N+2)}>0,
\end{equation}
hence $Y(P_2)<1/N$. Let us now consider the planes $\{X=X(P_2)\}$ and $\{Y=Y(P_2)\}$. The direction of the flow on the former of them is given by the sign of the expression
\begin{equation}\label{interm14}
X(P_2)[(m-1)Y-2X(P_2)]=(m-1)X(P_2)(Y-Y(P_2))<0, \ {\rm provided} \ Y<Y(P_2).
\end{equation}
The direction of the flow on the latter is given by the sign of the expression
\begin{equation}\label{interm15}
\begin{split}
h(X,Z)&=-Y(P_2)^2-\frac{\beta}{\alpha}Y(P_2)+X(1-NY(P_2))-Z\\&=X(1-NY(P_2))-\frac{(mN-N+2)\beta+1}{(mN-N+2)^2\alpha^2}-Z.
\end{split}
\end{equation}
We infer from \eqref{intermX} that $1-NY(P_2)>0$, hence the right hand side of \eqref{interm14} is increasing in $X$. Thus, for any $X<X(P_2)$ we have $h(X,Z)<h(X(P_2),Z)<0$, since
\begin{equation*}
h(X(P_2),Z)=\frac{(m-1)(N(m-p)+\sigma+2)}{\alpha(mN-N+2)^2(\sigma+2)}-\frac{(mN-N+2)\beta+1}{(mN-N+2)^2\alpha^2}-Z=-Z<0.
\end{equation*}
We thus get from \eqref{interm14} and \eqref{interm15} that the region $\{X<X(P_2), Y<Y(P_2)\}$ is invariant for the flow of the system \eqref{PSsyst1}. Since the orbit going out of $P_2$ starts decreasingly (as it readily follows from the direction of the eigenvector $e_3(\sigma)$ in \eqref{interm4}), it enters this region and thus stays forever inside it.

Let us now restrict to $\sigma\geq 2$. Since the equation for $\dot{X}$ in the system \eqref{PSsyst1} gives that $X$ decreases along any trajectory in the region $\{Y<0\}$ and both components $X$, $Y$ start in a decreasing way in a neighborhood of $P_2$, a change of monotonicity of the $X$-component might only occur while $Y>0$. Assume for contradiction that there exists a first point $\eta_1>0$ such that $\dot{X}(\eta_1)=0$, $X''(\eta_1)\geq0$. Then $X''(\eta_1)=(m-1)X(\eta_1)\dot{Y}(\eta_1)\geq0$, whence $\dot{Y}(\eta_1)\geq0$, which implies that the $Y$-component had to change monotonicity already at some first point $\eta_2\leq\eta_1$. That means $\dot{Y}(\eta_2)=0$ and $Y''(\eta_2)\geq0$. If $\eta_2=\eta_1$, since $\dot{X}(\eta_2)=\dot{Y}(\eta_2)$, we obtain after differentiating again the second equation in \eqref{PSsyst1} that
$$
Y''(\eta_2)=-\dot{Z}(\eta_2)=-Z(\eta_2)[(m+p-2)Y(\eta_2)+(\sigma-2)X(\eta_2)]<0
$$
and we reach a contradiction. If $\eta_2\in(0,\eta_1)$, then $\dot{X}(\eta_2)<0$ (since $\eta_1>\eta_2$ is the first point where $X$ ceases to be decreasing). Taking into account that along the orbit going out of $P_2$ we have $Y\leq Y(P_2)\leq1/N$ by \eqref{intermX}, we derive again that
$$
Y''(\eta_2)=\dot{X}(\eta_2)(1-NY(\eta_2))-\dot{Z}(\eta_2)<0,
$$
and we reach again a contradiction. Thus the $X$-component is decreasing along the trajectory, and one can easily check that the $Y$-component is decreasing too, by employing similar arguments.
\end{proof}

\subsection{Analysis for $m+p>2$}

We are now ready to prove \textbf{Part 2} of Theorem \ref{th.small}. To this end, we pass again to the variables $(U,Y,V)$ introduced in \eqref{PSvar3} and work with the system \eqref{PSsyst3}. Denote by $U(P_2)$ the $U$-coordinate of $P_2$ in these new variables.
\begin{proof}[Proof of Theorem \ref{th.small} when $m+p>2$]
We drive the orbits from $P_2$ and $P_0$ for $\sigma$ sufficiently small and prove that \emph{all of them} enter $P_0$. We know by Lemma \ref{lem.monot} that these orbits stay forever in the region $\{X<X(P_2), Y<Y(P_2)\}$. Consider now the plane $\{NY+kV=1\}$, that will be used as an upper barrier in the half-space $\{Y>0\}$, with $k>0$ to be chosen later. The flow of the system \eqref{PSsyst3} on this plane is given by the sign of the expression
\begin{equation*}
\begin{split}
F(U,Y,V)&=-N\left(Y^2+\frac{\beta}{\alpha}Y\right)+NU^{(m-1)/(m+p-2)}kV-NUV\\&+k\frac{\sigma(m-1)+2(p-1)}{m-1}U^{(m-1)/(m+p-2)}V
=-N\left(Y^2+\frac{\beta}{\alpha}Y\right)\\&+UV\left[k\left(N+\frac{\sigma(m-1)+2(p-1)}{m-1}\right)U^{(1-p)/(m+p-2)}-N\right]<0,
\end{split}
\end{equation*}
if we take $k$ such that
$$
\frac{1}{k}=\frac{(N+\sigma)(m-1)+2(p-1)}{N(m-1)}U(P_2)^{(1-p)/(m+p-2)}.
$$
Thus, the orbits going out of both $P_2$ and $P_0$ will remain below the plane $\{NY+kV=1\}$ and they will intersect the plane $\{Y=0\}$ at a point with $V\leq 1/k$. This in particular implies that
$$
UV\leq\frac{U(P_2)}{k}=\frac{[(N+\sigma)(m-1)+2(p-1)][\sigma(m-1)+2(p-1)]}{2N(\sigma+2)(mN-N+2)},
$$
which tends to zero as $\sigma$ approaches its lower bound $2(1-p)/(m-1)$. Thus, there exists $\sigma_0>2(1-p)/(m-1)$ such that for any $\sigma<\sigma_0$, we have
$$
UV<k_1=\frac{\beta^2}{4\alpha^2}
$$
at the point of crossing the plane $\{Y=0\}$. The rest of the proof is easy and completely similar to the proof of \cite[Proposition 5.1]{IS20b}: the region limited by the planes $\{Y=-\beta/2\alpha\}$, $\{Y=0\}$ and the hyperbolic cylinder $\{UV=k_1\}$ proves to be invariant for the flow, thus the orbits going out of both $P_0$ and $P_2$ will remain forever in this region and have to enter the critical point $P_0$ which is the only point in the closure of it. Since for any $\sigma\in(2(1-p)/(m-1),\sigma_0)$ there exist orbits containing good profiles and entering $P_1$, they have to come only from $Q_1$ and contain profiles behaving as \eqref{beh.Q1} as $\xi\to0$.
\end{proof}
We plot in Figure \ref{fig1} the orbits from $P_2$ and $P_0$ to illustrate visually the outcome of the previous analysis.
\begin{figure}[ht!]
  % Requires \usepackage{graphicx}
  \begin{center}
  \includegraphics[width=11cm,height=8cm]{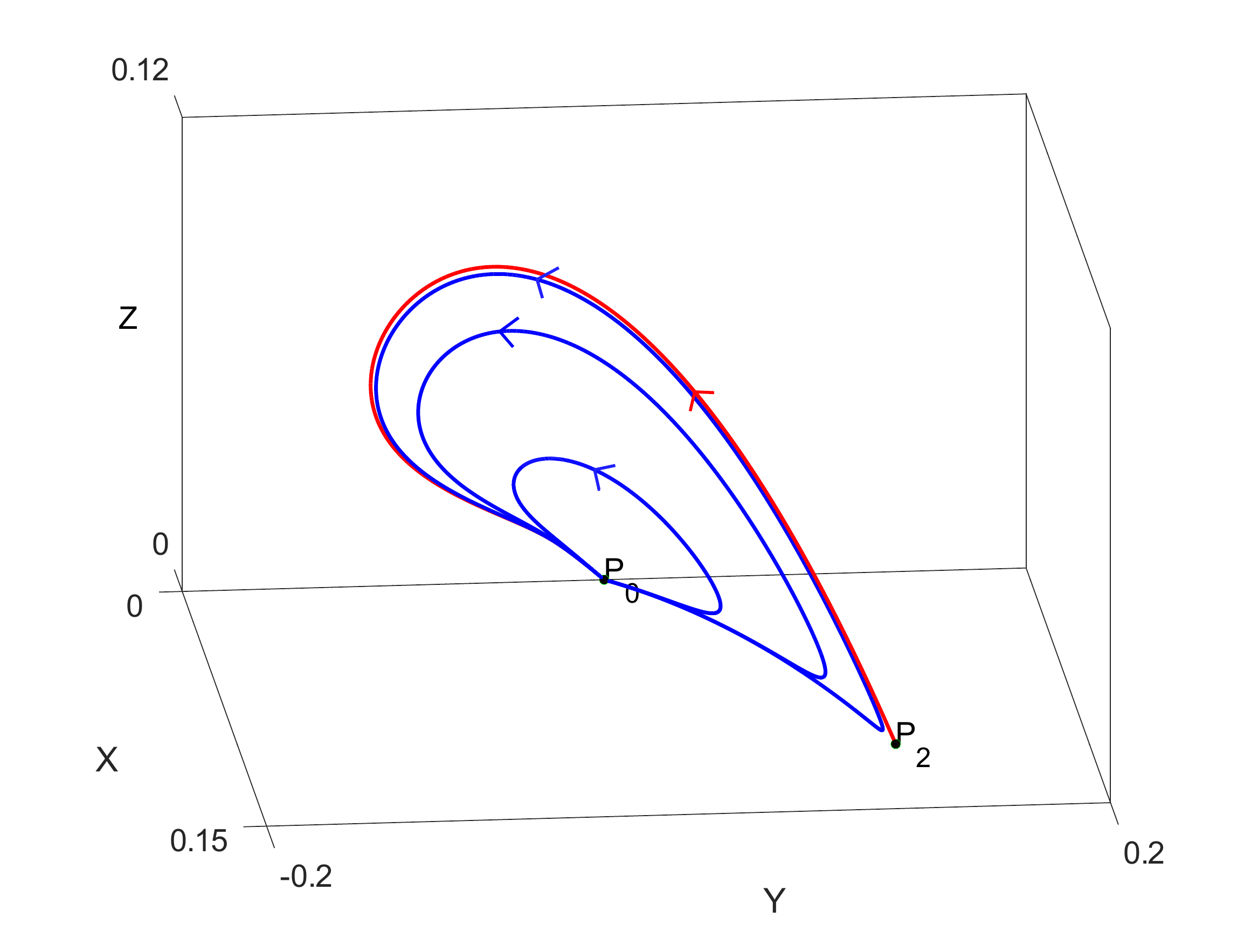}
  \end{center}
  \caption{Orbits from $P_0$ and $P_2$ in the phase space for $\sigma$ small. Experiment for $m=3$, $p=0.5$, $N=4$ and $\sigma=3.5$}\label{fig1}
\end{figure}

\subsection{Analysis for $m+p=2$}

The case of exponents $m$, $p$ and $\sigma$ as in \eqref{range.exp} with $m+p=2$ is more involved, due to the continuum of the target critical points on \eqref{crit.par}. The skeleton of the analysis and parts of the proofs follow the structure of the same analysis performed in dimension $N=1$ in \cite[Section 4]{IS21b}, thus we will only give here the general scheme and the details that depend on the dimension $N$. The core of the study is again given by the orbit going out of $P_2$.
\begin{proposition}\label{prop.P2mp2}
Let $m$, $p$ as in \eqref{range.exp} such that $m+p=2$. Then there exists $\sigma_0>2$ such that for any $\sigma\in(2,\sigma_0)$, all the orbits going out of $P_2$ and of $P_0$ enter one of the critical points $P_0^{\lambda}$ with $\lambda\in[-\beta/2\alpha,0)$.
\end{proposition}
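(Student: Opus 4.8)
The plan is to carry over to general dimension the argument used for $N=1$ in \cite[Section 4]{IS21b}, isolating the few places where the dimension genuinely enters: the confinement region provided by Lemma \ref{lem.monot}, the choice of the plane acting as an upper barrier for the component $Z$, and the sign of the flow on the parabolic cylinder \eqref{cyl} — the last of these already established in Lemma \ref{lem.cyl}. First I would recall from Lemma \ref{lem.monot} that the (unique) orbit going out of $P_2$ stays forever in the invariant region $\{X<X(P_2),\,Y<Y(P_2)\}$; since every orbit leaving $P_0$ along the center manifold starts inside this region as well ($X$ is small and $Y\to 0^+$ there, by \eqref{beh.P0}), the same is true for all of them. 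On all these trajectories $Z$ is non-decreasing (because $\dot Z=(\sigma-2)XZ\ge 0$ when $m+p=2$) and, in $\{Y<0\}$, $X$ is strictly decreasing. As there is no finite critical point with $X>0$ inside $\{Y<0\}$ and no orbit can connect $Q_4$ (Lemma \ref{lem.Q4}), such an orbit cannot remain in $\{Y\ge 0\}$ forever, hence it must cross the plane $\{Y=0\}$ at some $\eta_0$. It thus remains to (i) bound $Z(\eta_0)$ from above, and (ii) trap the orbit for $t>\eta_0$ inside a bounded set whose only critical points are the $P_0^{\lambda}$ with $\lambda\in[-\beta/2\alpha,0)$.

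For step (i) I would use the plane $\{NY+kZ=1\}$, which is the $m+p=2$ analogue of the barrier $\{NY+kV=1\}$ employed in the proof of Theorem \ref{th.small} for $m+p>2$ (note $V\to Z$ as $m+p\to 2^+$). On this plane $1-NY=kZ$, so the flow of \eqref{PSsyst1} satisfies
\begin{equation*}
\frac{d}{dt}(NY+kZ)=-NY\left(Y+\frac{\beta}{\alpha}\right)-NZ+k(N+\sigma-2)XZ,
\end{equation*}
which is strictly negative in $\{0\le X<X(P_2),\,Y\ge 0\}$ provided $k=N/[(N+\sigma-2)X(P_2)]$. Since the orbits leaving $P_2$ (where $NY(P_2)<1$ and $Z=0$) and $P_0$ (where $NY+kZ\to 0$) start below this plane and stay below it while $Y\ge 0$, we obtain $Z(\eta_0)<1/k=(N+\sigma-2)X(P_2)/N$. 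Plugging in the values $\beta/\alpha=2(m-1)/(\sigma+2)$ and $X(P_2)=(m-1)^2(\sigma-2)/[2(mN-N+2)(\sigma+2)]$ valid when $m+p=2$, one checks that $1/k\to 0$ as $\sigma\to 2^+$, while the peak of the parabola \eqref{crit.par}, namely $\beta^2/4\alpha^2=(m-1)^2/(\sigma+2)^2$, tends to $(m-1)^2/16>0$. Hence there is $\sigma_0>2$ so that for all $\sigma\in(2,\sigma_0)$ the orbit crosses $\{Y=0\}$ with $Z(\eta_0)<\beta^2/4\alpha^2$, i.e. strictly below that peak.

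For step (ii), with $Z(\eta_0)<\beta^2/4\alpha^2$ in hand, I would run the trapping argument as in \cite[Section 4]{IS21b}: the plane $\{Y=0\}$ is crossed only downwards (at $\eta_0$ one has $\dot Y=X-Z\le 0$, and $X<Z$ is preserved afterwards by the $N$-independent analysis of the plane $\{X=Z\}$), the plane $\{Y=-\beta/2\alpha\}$ is not crossed downwards as long as $Z<\beta^2/4\alpha^2$ (there $\dot Y=\beta^2/4\alpha^2-Z+X(1+N\beta/2\alpha)>0$), and the parabolic cylinder \eqref{cyl} — crossed, by the sign computation in Lemma \ref{lem.cyl}, only from its interior to its exterior in the relevant range of $Y$ — keeps the orbit in its exterior. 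Since $Z$ is monotone and, by this confinement, remains below $\beta^2/4\alpha^2$, while $X\downarrow 0$ (otherwise $\dot X\to 0$ would force $Y\to 2X_\infty/(m-1)>0$, impossible in $\{Y<0\}$), the $\omega$-limit set lies on the arc of \eqref{crit.par} between $P_0^{-\beta/2\alpha}$ and $P_0^{0}$, with the endpoint $P_0^{0}$ excluded since $Y$ has crossed strictly below $0$ and cannot return. The asymptotic stability of $\mathcal{S}$ and of each of its sub-arcs (Proposition \ref{prop.att}) then forces the $\omega$-limit to reduce to a single $P_0^{\lambda}$ with $\lambda\in[-\beta/2\alpha,0)$, which is the assertion.

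The main obstacle I expect is precisely this last trapping in $\{Y<0\}$: ruling out that $Z$ overshoots $\beta^2/4\alpha^2$ while $Y$ is still strictly above $-\beta/2\alpha$, in which case $Y$ could escape below $-\beta/2\alpha$ and the orbit would be pulled towards $Q_5$, destroying the interface behavior. As in \cite{IS21b}, this is handled by using simultaneously the cylinder barrier \eqref{cyl}, the plane $\{X=Z\}$ and the monotonicity of $Z$ to keep the orbit squeezed against the right half of the parabola; since none of these ingredients involves $N$, the argument transfers with only cosmetic modifications, the genuine $N$-dependence being confined to the explicit constant $k$ above and to the (already verified) sign of the flow on \eqref{cyl}.
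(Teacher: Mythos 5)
Your overall strategy (confinement via Lemma \ref{lem.monot}, a plane barrier bounding $Z$ at the crossing of $\{Y=0\}$, then trapping the orbit in $\{-\beta/2\alpha\le Y<0\}$ against the right half of the critical parabola) matches the paper's, and your step (i) is sound: the paper itself uses the plane $\{NY+kZ=1\}$ in exactly this way (in the proof of Lemma \ref{lem.limit}), and your computation of the flow on it and of the limit $1/k\to0$ as $\sigma\to2^+$ is correct. The genuine gap is in step (ii), at precisely the point you flag as ``the main obstacle'': none of the tools you list provides an \emph{upper} barrier for $Z$ once the orbit is in $\{Y<0\}$. The parabolic cylinder \eqref{cyl} is a lower barrier (it prevents re-entry into its interior), the plane $\{X=Z\}$ only preserves the inequality $X<Z$, and the monotonicity $\dot Z=(\sigma-2)XZ\ge0$ works against you. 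Smallness of $Z(\eta_0)$ alone does not save the argument: since $\dot X=X[(m-1)Y-2X]$ and $Y$ may stay close to $0$ for a long time, $X$ is only guaranteed to decay like $1/\eta$, so $\int X\,d\eta$ can diverge and $Z=Z(\eta_0)\exp\bigl((\sigma-2)\int X\,d\eta\bigr)$ is not a priori bounded. If $Z$ climbs past $\beta^2/4\alpha^2$ while $Y$ is still in $(-\beta/2\alpha,0)$, your barrier on the plane $\{Y=-\beta/2\alpha\}$ fails, the orbit may cross it and escape towards $Q_3$, and the conclusion is lost.

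What closes this gap in the paper (and in \cite{IS21b}) is a pair of additional explicit planes acting as upper barriers for $Z$ in $\{Y\le0\}$: the plane $cY+Z=d$ with $c=(m-1)^2/(\sigma+2)^2$, $d=(m-1)^2/2(\sigma+2)^2$ in \eqref{plane1mp2}, and the plane $aX+Z=a$ in \eqref{plane2mp2}, glued with the cylinder into the invariant regions $D_2\cup D_3$ whose upper faces remain strictly below the peak value $\beta^2/4\alpha^2$. Verifying that the flow points inward through these planes (the sign of $H(X,Y)$ in \eqref{interm19}, the auxiliary thresholds $X^*$, $Y^*$, and the relative position of the intersection line with the cylinder) is exactly where the restriction $\sigma$ close to $2$ is used a second time -- not merely to make $Z(\eta_0)$ small -- and where the dimension $N$ enters non-cosmetically, through the term $-NXY$ in the equation for $\dot Y$. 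So the transfer from $N=1$ is not ``only cosmetic'': without redoing these sign computations, or supplying an equivalent upper barrier for $Z$ in the half-space $\{Y<0\}$, your proof does not close.
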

\begin{proof}
We use the same system of regions $D_1$, $D_2$ and $D_3$ as in the proof of \cite[Propostition 4.2]{IS20b}.

\medskip

\noindent \textbf{Step 1.} Consider the plane
\begin{equation}\label{plane1mp2}
cY+Z=d, \qquad c=\frac{(m-1)^2}{(\sigma+2)^2}, \ d=\frac{(m-1)^2}{2(\sigma+2)^2}.
\end{equation}
The normal direction to it is given by the vector $(0,c,1)$ and the direction of the flow of the system \eqref{PSsyst1} on the plane is given by the sign of
\begin{equation}\label{interm19}
\begin{split}
H(X,Y)=&-\frac{(m-1)^2}{(\sigma+2)^2}Y^2-\frac{(m-1)^2(N+\sigma-2)}{(\sigma+2)^2}XY+\frac{\sigma(m-1)^2}{2(\sigma+2)^2}X\\
&-\frac{(2\sigma+5-m)(m-1)^3}{(\sigma+2)^4}Y-\frac{(m-1)^4}{2(\sigma+2)^4}.
\end{split}
\end{equation}
Fix now
$$
Y^*=-\frac{m-1}{6(2\sigma-m+5)}, \ {\rm that \ is} \ -\frac{(2\sigma+5-m)(m-1)^3}{(\sigma+2)^4}Y^*-\frac{(m-1)^4}{6(\sigma+2)^4}=0.
$$
We are generally interested in getting a negative sign for $H(X,Y)$. From the choice of $Y^*$ we readily get that
$$
-\frac{(2\sigma+5-m)(m-1)^3}{(\sigma+2)^4}Y-\frac{(m-1)^4}{6(\sigma+2)^4}<0, \ {\rm if} \ Y>Y^*,
$$
and we are left in \eqref{interm19} with a free term $-(m-1)^4/3(\sigma+2)^4$ to dominate the other two possibly positive ones. We split this term by fixing
$$
X^*=\frac{2(m-1)^2}{\sigma(\sigma+2)^2(N+2)}, \ {\rm that \ is} \ \frac{\sigma(m-1)^2}{2(\sigma+2)^2}X^*=\frac{1}{N+2}\frac{(m-1)^4}{(\sigma+2)^4}.
$$
With these choices for $X^*$ and $Y^*$, we restrict ourselves to the region $\mathcal{R}=\{0<X<X^*,Y>Y^*\}$. We are left with a single possibly positive term in \eqref{interm19} (the one in $XY$, if $Y<0$), thus we have to estimate
\begin{equation*}
\begin{split}
-\frac{(m-1)^2(N+\sigma-2)}{(\sigma+2)^2}XY&-\frac{(m-1)^4}{(\sigma+2)^4}\left(\frac{1}{3}-\frac{1}{N+2}\right)\\
&<-\frac{(m-1)^2(N+\sigma-2)}{(\sigma+2)^2}X^*Y^*-\frac{(m-1)^4}{(\sigma+2)^4}\frac{N-1}{3(N+2)}\\
&=\frac{2(m-1)^5(N+\sigma-2)}{6\sigma(\sigma+2)^4(N+2)(2\sigma-m+5)}-\frac{(m-1)^4}{(\sigma+2)^4}\frac{N-1}{3(N+2)}\\
&=\frac{(m-1)^4}{3(N+2)(\sigma+2)^4}\left[\frac{(m-1)(N+\sigma-2)}{\sigma(2\sigma-m+5)}-(N-1)\right],
\end{split}
\end{equation*}
and the term in brackets is negative for $\sigma$ sufficiently close to 2. Indeed, by taking limit as $\sigma\to2$ in this last term, by a simple calculation we find
$$
\frac{N(3m-19)+18-2m}{2(9-m)}\leq\frac{2(3m-19)+18-2m}{2(9-m)}=\frac{4(m-5)}{2(9-m)}<0,
$$
where we have strongly used the fact that $m\in(1,2)$, since $m=2-p$ in our case. Gathering all the previous estimates, we conclude that $H(X,Y)<0$ for any $(X,Y)\in\mathcal{R}$. Moreover, the plane \eqref{plane1mp2} does not intersect the parabolic cylinder \eqref{cyl} in the region $\{Y>Y^*\}$: such intersection would occur with $Y$-coordinate as one of the roots of
$$
P(Y)=Y^2+\left(\frac{2(m-1)}{\sigma+2}-\frac{(m-1)^2}{(\sigma+2)^2}\right)Y+\frac{(m-1)^2}{2(\sigma+2)^2}=0,
$$
and it is easy to check that $P(Y^*)>0$ and $P'(Y^*)>0$, the detailed calculations being exactly the same as in the proof of \cite[Proposition 4.2]{IS21b}.

\medskip

\noindent \textbf{Step 2.} Consider the same plane as we used in dimension $N=1$, namely
\begin{equation}\label{plane2mp2}
aX+Z=a, \qquad a=\frac{(m-1)^2(3\sigma+7-m)}{3(\sigma+2)^2(2\sigma+5-m)},
\end{equation}
and the direction of the flow of the system \eqref{PSsyst1} over this plane is given by
$$
L(X,Y)=\frac{(m-1)^2(3\sigma+7-m)}{3(\sigma+2)^2(2\sigma+5-m)}X\left[-\sigma X+(m-1)Y+\sigma-2\right]
$$
which is negative, provided that $-\sigma X+(m-1)Y+\sigma-2<0$. Taking the line $r_2$ as the intersection of \eqref{plane1mp2} with \eqref{plane2mp2}, this line crosses the plane $\{Y=0\}$ at a point lying in the region $\{X>X^*\}$. Indeed, calculating the intersection point, we find
$$
X-X^*=\frac{m-1}{2(3\sigma+7-m)}-\frac{2(m-1)^2}{(N+2)\sigma(\sigma+2)^2}\to\frac{(m-1)(m^2-14m+29+8N)}{16(13-m)(N+2)}, \ \ {\rm as} \ \sigma\to2,
$$
which implies that $X>X^*$ holds true for $\sigma\in(2,\sigma_0)$ for some $\sigma_0>2$. Moreover, the intersection of the plane \eqref{plane2mp2} with the parabolic cylinder \eqref{cyl} is given by a curve whose projection on the plane $\{Z=0\}$ is given by
\begin{equation}\label{interm20}
X=\frac{1}{a}\left[Y^2+\frac{2(m-1)}{\sigma+2}Y+b\right]=:g(Y).
\end{equation}

\medskip

\noindent \textbf{Step 3.} With the previous constructions and estimates, and since $Y^*$ does not depend on $N$, we are exactly in the same position as in the proof of \cite[Proposition 4.2]{IS21b}, thus we can construct the same system of invariant zones as there in order to conclude the proof. We just remind them here for the reader's convenience, and we refer to the above mentioned reference for details:
$$
D_1=\left\{0\leq X\leq X^*, 0\leq Y\leq\frac{1}{2}, 0\leq Z\leq-cY+d \right\},
$$
$$
D_2=\left\{0\leq X\leq X^*, eX-f\leq Y\leq0, -Y^2-\frac{2(m-1)}{\sigma+2}Y\leq Z\leq-cY+d \right\},
$$
where $e=(3\sigma+7-m)/3(2\sigma+5-m)$ and $f=-Y^*$, and finally
$$
D_3=\left\{0\leq X\leq X^*, g^{-1}(X)\leq Y\leq eX-f, -Y^2-\frac{2(m-1)}{\sigma+2}Y\leq Z\leq-aX+b\right\},
$$
where $g(Y)$ has been defined in \eqref{interm20}. For an easier understanding, we plot these regions in Figure \ref{fig2}. The previous calculations of the flow on the planes, together with the barrier represented by the parabolic cylinder \eqref{cyl} according to Lemma \ref{lem.cyl}, and with Lemma \ref{lem.monot} prove that for $\sigma\in(2,\sigma_0)$ sufficiently small (in order that the estimates in Steps 1 and 2 hold true), all the orbits going out of $P_2$ and $P_0$ enter the region $D_1$ and then pass to the region $D_2\cup D_3$ which is invariant. The monotonicity of components $X$ (when $Y<0$) and $Z$ over the orbits give that these orbits have to enter one of the critical points $P_0^{\lambda}$, as being the only ones in the closure of the invariant region $D_2\cup D_3$.
\end{proof}
\begin{figure}[ht!]
  % Requires \usepackage{graphicx}
  \begin{center}
  \includegraphics[width=11cm,height=8cm]{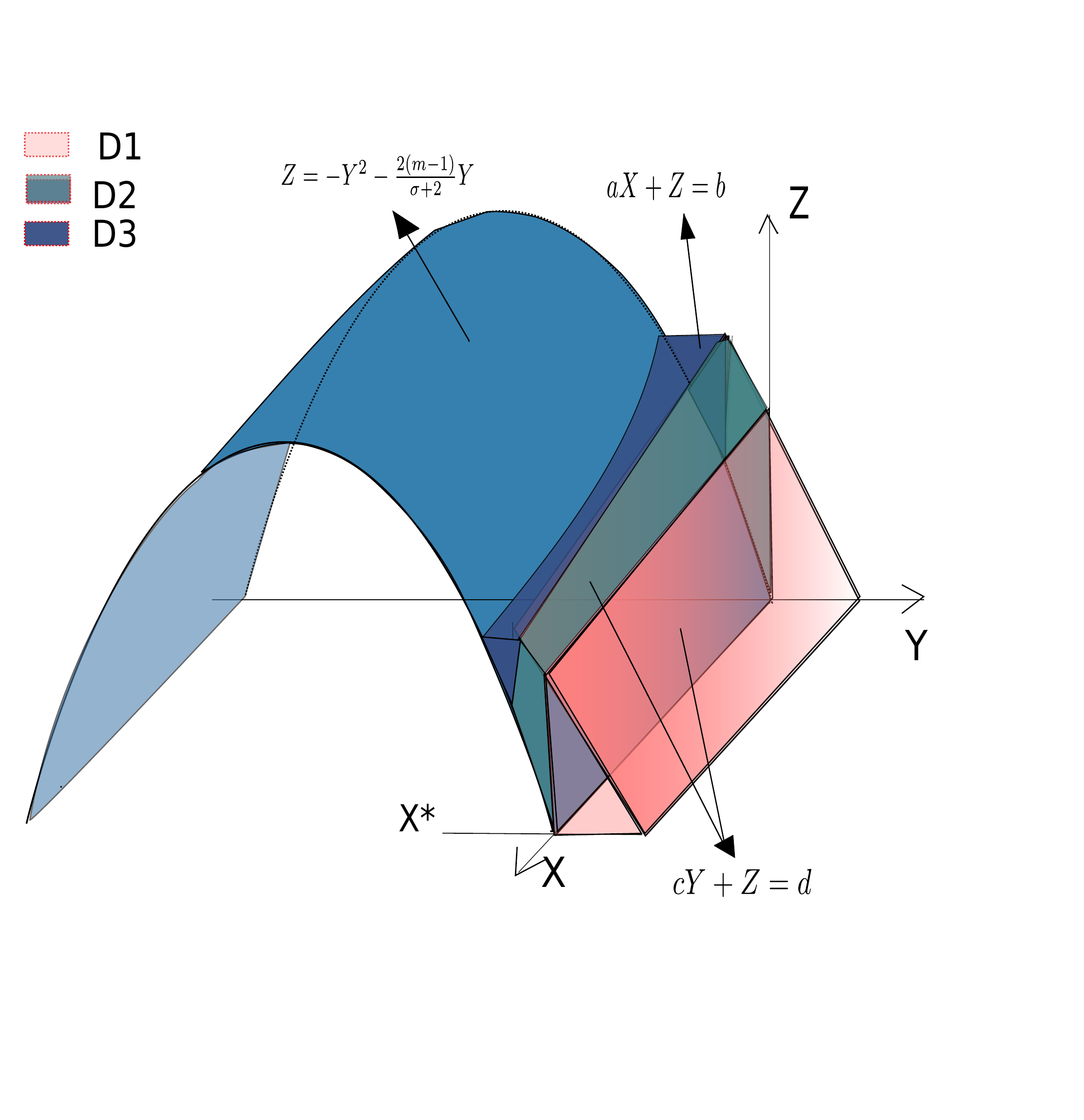}
  \end{center}
  \caption{A plot of the regions $D_1$, $D_2$ and $D_3$ in the phase space}\label{fig2}
\end{figure}

We also want to show that for $\sigma\in(2,\sigma_0)$ there exist good profiles contained in orbits starting from $Q_1$, in order to complete the proof of \textbf{Part 4} of Theorem \ref{th.small}. This follows readily from the next preparatory result.
\begin{lemma}\label{lem.Q1mp2}
The unique orbit going out of $Q_1$ inside the invariant plane $\{Z=0\}$ enters the critical point $P_2$.
\end{lemma}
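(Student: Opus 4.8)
The plan is to run a planar phase-plane argument inside the invariant plane $\{Z=0\}$, on which, because $m+p=2$, the system \eqref{PSsyst1} reduces to the two-dimensional system
\begin{equation*}
\dot X = X[(m-1)Y-2X],\qquad \dot Y=-Y^2-\frac{\beta}{\alpha}Y+X(1-NY).
\end{equation*}
First I would record the critical points of this planar system in the half-plane $\{X\ge 0\}$: the points $P_0=(0,0)$ and $P_1=(0,-\beta/\alpha)$ sitting on the invariant line $\{X=0\}$, the point $P_2$ in the open quadrant $\{X>0\}$ (still a critical point when $m+p=2$), and on the equator the infinite critical points $Q_1$, $Q_2$, $Q_3$ and, for $N\ge 3$, $Q_5$. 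As in Lemma \ref{lem.P2}, the two finite eigenvalues $\lambda_1,\lambda_2$ at $P_2$ have negative real part, so $P_2$ is asymptotically stable within $\{Z=0\}$; and by Lemma \ref{lem.Q1} (respectively Lemma \ref{lem.Q15N2} if $N=2$) the orbit $\gamma$ going out of $Q_1$ inside $\{Z=0\}$ with local behavior \eqref{beh.Q1} leaves $Q_1$ with $X\to+\infty$ and $Y\to 1/N$.

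The geometric heart of the proof is that the open region $\mathcal A:=\{X>X(P_2),\ Y>Y(P_2)\}$ of $\{Z=0\}$ is forward-invariant for the flow. On the face $\{X=X(P_2)\}$ one has $\dot X=(m-1)X(P_2)(Y-Y(P_2))>0$, using $2X(P_2)/(m-1)=Y(P_2)$; on the face $\{Y=Y(P_2)\}$ one has $\dot Y=h(X,0)$, where $h$ is the function from the proof of Lemma \ref{lem.monot}, with $h(X(P_2),0)=0$ and $\partial_X h=1-NY(P_2)>0$ by \eqref{intermX}, whence $\dot Y>0$ there. Both inequalities say the flow enters $\mathcal A$; these are exactly the computations already carried out for Lemma \ref{lem.monot}, now specialized to $Z=0$. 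Since \eqref{intermX} in fact gives the \emph{strict} inequality $Y(P_2)<1/N$, the orbit $\gamma$ already lies in $\mathcal A$ in a neighbourhood of $Q_1$, and therefore $\gamma\subset\mathcal A$ for all forward time.

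It then remains to identify $\omega(\gamma)$, which I would do with the Poincar\'e--Bendixson theorem on the Poincar\'e sphere. The closure $\overline{\mathcal A}$ is compact, contains no critical point in its interior (in particular $P_0$ and $P_1$ are excluded, since they sit at $X=0<X(P_2)$), and on its boundary meets only $P_2$ (asymptotically stable), $Q_1$ (the $\alpha$-limit of $\gamma$), $Q_2$ (an unstable node by Lemma \ref{lem.Q23}), together with the equator arc joining $Q_1$ and $Q_2$. Hence $\omega(\gamma)$ is a critical point, a periodic orbit, or a cyclic chain of critical points and connecting orbits. A periodic orbit in $\mathcal A$ would have to enclose a critical point, and $\mathcal A$ contains none in its interior; a cyclic chain would have to pass through $Q_1$ or $Q_2$, but nothing can have $Q_2$ in its $\omega$-limit, and $\gamma$ cannot return to $Q_1$ because the stable manifold of $Q_1$ lies on $\{W=0\}$, i.e.\ at infinity, which $\gamma$ leaves and (by invariance of $\{W>0\}$) never re-enters. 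So no admissible cyclic chain closes up, and $\omega(\gamma)=\{P_2\}$, which is the assertion.

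I expect the two genuine difficulties to be the places where the dimension enters. The first is checking that $\{Y=Y(P_2)\}$ is an honest barrier: this rests precisely on the sign $1-NY(P_2)>0$ coming from \eqref{intermX}, and without it the flow on that face would point the wrong way and $\mathcal A$ would fail to be invariant. The second is the Poincar\'e--Bendixson endgame, which needs the complete list of infinite critical points from Section \ref{sec.inf} and, in particular, the fact that $Q_1$ is a saddle for $N\ge 3$ and a saddle-node for $N=2$ (never a node), so that the candidate graphics through $Q_1$ or $Q_2$ can be ruled out in both cases. Everything else is bookkeeping and reduces to flow computations identical to those already done in dimension $N=1$.
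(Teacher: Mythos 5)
Your proof is correct, but it runs on a different trapping region than the paper's. The paper restricts to $\{Z=0\}$ and (i) kills limit cycles with Bendixson's criterion, computing ${\rm div}\,F=(m-3)Y-(N+4)X-\beta/\alpha<0$ in the first quadrant (this is where $m\in(1,2)$, i.e.\ $m+p=2$, is used), and (ii) traps the orbit in the invariant strip $\{0\le Y\le 1/N\}$ together with the monotone decrease of $X$ while $X>(m-1)/2N$, so that the orbit ends up in a rectangle containing $P_2$ in its \emph{interior}; Poincar\'e--Bendixson then finishes. You instead trap the orbit in the corner quadrant $\mathcal A=\{X>X(P_2),\,Y>Y(P_2)\}$, whose forward invariance is exactly the mirror image of the computation in Lemma \ref{lem.monot}, and you exclude cycles by the index argument (a periodic orbit in the convex set $\overline{\mathcal A}$ would have to enclose a critical point, and $\overline{\mathcal A}$ has none in its interior) rather than by a divergence computation. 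Each route buys something: yours needs no sign condition on the divergence and disposes of $P_0=(0,0)$ for free, since $P_0\notin\overline{\mathcal A}$, whereas the paper's rectangle has $P_0$ in its closure and the paper is silent about why the orbit does not limit there (one must check that on the center manifold of $P_0$ inside $\{Z=0\}$ the flow is outgoing). Conversely, the paper's rectangle is insensitive to whether $P_2$ is a node or a focus, while your quadrant $\mathcal A$ only allows $P_2$ to be approached through a sector of opening $\pi/2$, which is impossible for a spiral; since Lemma \ref{lem.P2} explicitly leaves the focus case open, you should add one sentence observing either that your exhaustive Poincar\'e--Bendixson elimination forces $\omega(\gamma)=\{P_2\}$ and hence \emph{a posteriori} rules out the focus, or, more directly, that for $m+p=2$ one has $[(N+2)(m-1)+2(mN-N+2)\beta+4]^2-8(mN-N+2)(m-1)\ge (m-1)^2(N-2)^2+8N(m-1)+16>0$, so the eigenvalues $\lambda_1,\lambda_2$ are real and $P_2$ is genuinely a node. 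With that remark added, the argument is complete, including the $N=2$ saddle-node case, which you correctly handle by noting that $\dot w>0$ near $Q_1$ prevents any orbit with $w>0$ from returning there.
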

\begin{proof}
The analysis in Lemma \ref{lem.P2} shows that, when restricted to the invariant plane $\{Z=0\}$, the critical point $P_2$ is stable (either a node or a focus according to the eigenvalues $\lambda_1$ and $\lambda_2$). Moreover, letting $Z=0$ and removing the last equation in \eqref{PSsyst1}, we are left with a system in $(X,Y)$ whose divergence of the vector field is
$$
{\rm div}\,F(X,Y)=(m-3)Y-(N+4)X-\frac{\beta}{\alpha}<0, \qquad {\rm for \ any} \ X, Y>0,
$$
since $m\in(1,2)$. We infer from Bendixon's criteria that this system has no limit cycles in the first quadrant of the plane $\{Z=0\}$. We also notice by an immediate inspection of the flow over the lines $X=0$, $Y=0$ and $Y=1/N$, that the strip $0\leq Y\leq 1/N$ is invariant for the orbits of the system contained in the plane $\{Z=0\}$, and the orbit going out of $Q_1$ begins with $Y=1/N$ and enters this strip, according to the analysis performed in Lemma \ref{lem.Q1}. Thus, this orbit stays forever in the strip $0\leq Y\leq 1/N$ and its $X$-component is obviously decreasing while $X>(m-1)/2N$, since $\dot{X}<0$. It thus follows that the unique orbit going out of $Q_1$ inside the plane $\{Z=0\}$ has to enter a critical point lying inside the rectangle formed by the lines $X=0$, $Y=0$, $Y=1/N$ and $X=(m-1)/2N$, and the only such critical point is $P_2$.
\end{proof}
Lemma \ref{lem.Q1mp2} together with a standard continuity argument gives that the orbits going out of $P_1$ very close to the plane $\{Z=0\}$ will also enter the region $D_1$ and then the system of regions used in the proof of Proposition \ref{prop.P2mp2}, thus ending up in one of the critical points $P_0^{\lambda}$, provided that $\sigma\in(2,\sigma_0)$. The proof of Theorem \ref{th.small}, \textbf{Part 4} is complete.

We end this section with the proof of \textbf{Part 5} of Theorem \ref{th.small}. To this end, assume that Part 3 has been already proved (it is postponed at the end of the paper), hence there exists a first value $\sigma^*$ such that the orbit going out of $P_2$ enters exactly the peak of the parabola \eqref{crit.par} for $\sigma=\sigma^*$ and enters some critical point $P_0^{\lambda}$ for any $\sigma\in(2,\sigma^*)$, as proved in Proposition \ref{prop.P2mp2}. Let $\lambda(\sigma)$ be the value of $\lambda$ such that the orbit going out of $P_2$ enters $P_0^{\lambda}$ for $\sigma\in(2,\sigma^*)$. Then, Part 5 of Theorem \ref{th.small} is an immediate consequence of
\begin{lemma}\label{lem.limit}
We have $\liminf\limits_{\sigma\to2}\lambda(\sigma)=0$.
\end{lemma}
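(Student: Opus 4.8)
The plan is to analyze the behavior of the orbit going out of $P_2$ in the limit $\sigma\to2$ and to show that its $\lambda$-coordinate at the target point on the parabola \eqref{crit.par} cannot stay bounded away from $0$ along some sequence $\sigma_n\to2$. First I would record the limiting positions of the relevant critical points: as $\sigma\to2$ (recall $m+p=2$, so $m=2-p$ stays fixed), one computes from \eqref{selfsim.exp} that $\alpha\to(\sigma+2)/(\sigma(m-1)+2(p-1))$ evaluated at $\sigma=2$, hence $\beta/\alpha\to(m-p)/(\sigma+2)|_{\sigma=2}$, and the endpoint $\lambda=-\beta/\alpha$ of the parabola tends to a fixed negative value while the peak $\lambda=-\beta/2\alpha$ tends to a fixed value as well. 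Correspondingly $P_2$ converges to a definite point with $X(P_2)\to(m-1)/2\alpha(mN-N+2)$, $Y(P_2)\to 1/\alpha(mN-N+2)$ and the eigenvalue $\lambda_3\to0$ by its formula in Lemma \ref{lem.P2}, which is the key degeneracy: the unstable direction at $P_2$ becomes slow as $\sigma\to2$.

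The central step is to track how far the orbit out of $P_2$ travels before it is captured by the attractor $\mathcal{S}$ of Proposition \ref{prop.att}. I would argue by contradiction: suppose $\liminf_{\sigma\to2}\lambda(\sigma)=-c<0$, so that along a sequence $\sigma_n\to2$ the orbit enters a point $P_0^{\lambda(\sigma_n)}$ with $\lambda(\sigma_n)\le-c/2<0$. Since the $Z$-component is non-decreasing along trajectories when $m+p=2$ (Remark 1 after Lemma \ref{lem.P0l}), and since $Z(P_0^\lambda)=-\lambda^2-(\beta/\alpha)\lambda$ is strictly increasing on $[-\beta/\alpha,-\beta/2\alpha]$ and strictly decreasing on $[-\beta/2\alpha,0]$, an entry point with $\lambda\le -c/2$ that lies in the right branch $\lambda\in[-\beta/2\alpha,0)$ corresponds to a value $Z_\infty=-\lambda^2-(\beta/\alpha)\lambda$ that is bounded below by a positive constant $\delta(c)>0$ uniformly in $n$. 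On the other hand, I would estimate the $Z$-coordinate gained along the orbit: using $\dot Z=(\sigma-2)XZ$ together with the bound $X<X(P_2)$ from Lemma \ref{lem.monot} and the definition of $Z$ in \eqref{PSvar1}, one has along the whole orbit $Z(\eta)=Z(0)\exp\big((\sigma-2)\int X\,d\eta\big)$ where $Z(0)\to0$ as the orbit leaves $P_2$ (since $Z(P_2)=0$). The point is that the orbit leaves $P_2$ tangent to $e_3(\sigma)$ in \eqref{interm4}–\eqref{interm5}, whose $Z$-component is normalized to $1$ but which is approached at the slow rate $\lambda_3\to0$; combining the smallness of the escape rate with the boundedness of $X$ and with a bound on the total "time" $\eta$ needed to reach the attractor (which I would control via the invariant-region machinery of Proposition \ref{prop.P2mp2}, valid for $\sigma\in(2,\sigma_0)$), one shows that $Z$ along the orbit cannot exceed a quantity tending to $0$ as $\sigma\to2$. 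This contradicts $Z_\infty\ge\delta(c)>0$, so $\lambda(\sigma)\to0$ along some sequence, i.e. $\liminf_{\sigma\to2}\lambda(\sigma)=0$.

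The main obstacle I anticipate is making the rate estimate uniform: I must show that the "time to reach the attractor" along the orbit out of $P_2$ does not blow up as $\sigma\to2$ fast enough to compensate the factor $(\sigma-2)$ in $\dot Z$, and simultaneously that $Z(0)$ (the value of $Z$ at which the orbit genuinely detaches from the neighborhood of $P_2$ where the linear approximation is valid) is itself small with $\sigma-2$. A clean way to handle both at once is to work directly in the $(X,Y,V)$-type variables or to exploit the explicit invariant regions $D_1,D_2,D_3$ of Proposition \ref{prop.P2mp2}: these regions have $X$-extent $O((\sigma-2)^{-1})$ times a bounded factor — more precisely $X^*=2(m-1)^2/\sigma(\sigma+2)^2(N+2)$ stays bounded — while the $\eta$-length of the orbit inside them can be bounded using $\dot X$ and $\dot Y$ away from the critical point, so that $(\sigma-2)\int X\,d\eta$ stays bounded but $Z(0)\to0$ dominates. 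I expect the argument to then parallel closely the corresponding step in \cite[Section 4]{IS21b}; the only genuinely $N$-dependent pieces are the explicit constants $X(P_2)$, $Y(P_2)$ and the eigenvalue $\lambda_3$, all of which remain bounded and bounded away from degeneracy except for $\lambda_3\to0$, which is exactly the feature driving the conclusion.
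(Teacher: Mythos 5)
Your reduction of the lemma to a statement about the $Z$-coordinate is correct and is a genuine insight: since $Z$ is non-decreasing along trajectories when $m+p=2$ and $Z(P_0^{\lambda})=-\lambda(\lambda+\beta/\alpha)\geq |\lambda|\beta/2\alpha$ on the right branch of the parabola, showing that $\sup Z\to0$ along the orbit from $P_2$ as $\sigma\to2$ would indeed force $\lambda(\sigma)\to0$. You have also correctly located the source of the smallness (the degeneracy $\sigma(m-1)+2(p-1)=(m-1)(\sigma-2)\to0$, equivalently $\lambda_3\to0$). However, the mechanism you propose for closing the argument has a real gap, and it is precisely at this point that the paper takes a different, geometric route.

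The gap is twofold. First, the formula $Z(\eta)=Z(0)\exp\bigl((\sigma-2)\int X\,d\eta\bigr)$ with ``$Z(0)\to0$ since $Z(P_2)=0$'' is not a usable estimate: $Z$ vanishes identically at $P_2$, the orbit takes infinite $\eta$-time to leave any neighborhood of $P_2$, and on that infinite stretch $X\approx X(P_2)>0$, so the exponent diverges and the product is of the indeterminate form $0\cdot\infty$. One must anchor the estimate at a concrete finite point of the orbit and prove quantitatively that $Z$ there is small; the paper does this by showing that the plane $\{NY+kZ=1\}$ is an upper barrier in $\{Y\geq0\}$, so the orbit crosses $\{Y=0\}$ with $Z\leq 1/k\to0$ as $\sigma\to2$ (the same computation carried out for $m+p>2$ in Section~\ref{sec.small}). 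You allude to Proposition~\ref{prop.P2mp2} but never isolate this step. Second, even granting smallness of $Z$ at the crossing of $\{Y=0\}$, the remaining factor $\exp\bigl((\sigma-2)\int_{\eta_0}^{\infty}X\,d\eta\bigr)$ is not obviously bounded uniformly: in $\{Y<0\}$ one only has $\dot X\leq-2X^2$, which gives $X(\eta)\lesssim 1/\eta$ and a logarithmically divergent integral a priori; the integral converges only once the orbit locks onto the exponential approach to its target $P_0^{\lambda}$, and controlling when this happens uniformly as $\sigma\to2$ is essentially as hard as the lemma itself. The invariant regions $D_1,D_2,D_3$ bound the spatial extent of the orbit, not the $\eta$-time spent in them, so they do not supply the missing bound. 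The paper avoids this entirely: after establishing that the orbit crosses $\{Y=0\}$ with small $Z$, it builds a trapping region $D_0$ bounded by the planes $\{X=0\}$, $\{Y=0\}$, the surface $\{\dot Y=0\}$ (on which the flow is shown to point inward for $Y<0$) and the two-dimensional stable manifold of $P_0^{\lambda_0/2}$; an orbit entering $D_0$ can only connect to points $P_0^{\lambda}$ with $\lambda$ between $\lambda_0/2$ and $0$, contradicting the assumption $\liminf_{\sigma\to2}\lambda(\sigma)=\lambda_0\neq0$. If you want to salvage your analytic approach, you would need to replace the time-integral estimate by some such confinement argument in the half-space $\{Y<0\}$.
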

\begin{proof}
The proof is very similar to the one of \cite[Lemma 4.4]{IS21b}, where it is proved a similar result in dimension $N=1$. Assume for contradiction that
$$
\liminf\limits_{\sigma\to2}\lambda(\sigma)=\lambda_0>0,
$$
which means in particular that the orbit going out of $P_2$ will not connect to the critical point $P_0^{\lambda_0/2}$ for $\sigma$ in a right-neighborhood of 2. Following the proof of \cite[Lemma 4.4]{IS21b}, we build an invariant region $D_0$ as being the solid limited by the planes $\{X=0\}$, $\{Y=0\}$, the surface of equation $\{\dot{Y}=0\}$ and the two-dimensional stable manifold of $P_0^{\lambda_0/2}$. In order for the proof to work exactly as in \cite[Lemma 4.4]{IS21b} (that is, to show that the region $D_0$ is indeed invariant), we need to study the flow of the system on the surface $\{\dot{Y}=0\}$, which depends now on $N$. The normal to this surface is given by
$$
\overline{n}=\left(1-NY,-2Y-\frac{\beta}{\alpha}-NX,-1\right)
$$
and the flow on the surface $\{\dot{Y}=0\}$ is given by the sign of the expression
$$
X\left[(1-NY)(m-1)Y-2(1-NY)X-(\sigma-2)Z\right]<0, \ \ {\rm for \ any} \ Y<0.
$$
Since the other "walls" of the solid $D_0$ form barriers for the flow once inside it (as it is immediate to check), it follows that an orbit entering $D_0$ cannot go out of this region afterwards (and thus has to connect to some critical point $P_0^{\lambda}$ with $\lambda\leq\lambda_0/2$, which produces a contradiction). The last step of the proof is to show that, for $\sigma$ sufficiently small, the orbit coming out of $P_2$ enters $D_0$ at the crossing point with the plane $\{Y=0\}$. But this follows from letting a plane of the form $\{NY+kZ=1\}$ as an upper barrier for the orbit from $P_2$ in the half-space $\{Y\geq0\}$ and thus show that it crosses the plane $\{Y=0\}$ at a point with $Z=1/k\to0$ as $\sigma\to2$. The calculation of the negative flow over the plane $\{NY+kZ=1\}$ has been already done in detail for $m+p>2$ in the current section and we do not repeat it here, as apart from the fact that $m+p=2$ simplifies a bit the expressions, the outcome is the same.
\end{proof}
We thus conclude that the orbit going out of $P_2$ visits all the critical points $P_0^{\lambda}$ with $\lambda\in(-\beta/2\alpha,0)$ one by one, as claimed in \textbf{Part 5} of Theorem \ref{th.small}.

\section{Classification of profiles for $\sigma$ large. Proof of Theorem \ref{th.large}}\label{sec.large}

Contrasting with the analysis in the previous Section \ref{sec.small}, the global analysis of the trajectories of the system \eqref{PSsyst1} with $\sigma$ sufficiently large do not depend on the sign of $m+p-2$. Moreover, we recall here that the geometric construction we use is new and the result is not only new in dimension $N\geq2$ but also improves the outcome of the similar analysis in dimension $N=1$ by completely discarding the possibility of connections to interface points from $Q_1$, a fact that we were not yet able to prove in the papers \cite{IS20b, IS21b} devoted to dimension $N=1$. Assume from the beginning that we work with, at least, $\sigma>2$, thus Lemma \ref{lem.monot} is in force.
\begin{proof}[Proof of Theorem \ref{th.large}]
The general idea of the proof is to construct a system of planes passing through $P_2$ and acting as barriers for the orbits coming from either $P_2$ or $Q_1$, thus limiting their access to the critical point $P_1$ (or to the critical parabola \eqref{crit.par}). We divide the (rather technical) proof into several steps for the readers' easiness.

\medskip

\noindent \textbf{Step 1. Plane of no return.} We show first that, given $\sigma>0$, there exists $Y_0>0$ sufficiently large such that, if $Y<-Y_0$, we have $\dot{Y}<0$ for $X<X(P_2)$. Indeed, since $Z\geq0$, it is enough to take $Y_0=(m-1)/2$, thus $-Y_0$ is the smallest root of the equation
$$
-Y^2-\frac{\beta}{\alpha}Y-NX(P_2)Y+X(P_2)=0.
$$
Then, for any $Y<-Y_0$ we have
$$
\dot{Y}=-Y^2-\frac{\beta}{\alpha}Y+X(1-NY)-Z<-Y^2-\frac{\beta}{\alpha}Y+X(P_2)(1-NY)<0.
$$
This implies that, once an orbit crossed the plane $\{Y=-Y_0\}$ at some point $X$ such that $X<X(P_2)$, it will remain forever in the region $\{Y<-Y_0\}$. Moreover, since
$$
-\frac{\beta}{\alpha}+Y_0=-\frac{\beta}{\alpha}+\frac{m-1}{2}=\frac{\sigma(m-1)+2(p-1)}{2(\sigma+2)}>0,
$$
it follows that $-Y_0<-\beta/\alpha$. Observe moreover that $Y_0$ is independent of $\sigma$.

\medskip

\noindent \textbf{Step 2. First plane through $P_2$.} Consider a plane of the form
\begin{equation}\label{plane1}
(\Pi_1) \qquad Y=\frac{Y(P_2)+B}{X(P_2)}X-B,
\end{equation}
where $B>0$ will be chosen later. The direction of the flow of the system \eqref{PSsyst1} on the plane $(\Pi_1)$ is given by the sign of the following complicated expression
\begin{equation}\label{interm16}
F(X,Z)=A_1X^2+A_2X+A_3+Z,
\end{equation}
with coefficients
\begin{equation*}
\begin{split}
&A_1=\frac{[K(\sigma+2)B+L][2m(\sigma+2)B+L]K}{(m-1)^2L^2},\\
&A_2=-\frac{2(\sigma+2)^2(m+1)KB^2+(\sigma+2)B[(K+2m)\sigma-K_1]+L^2}{(m-1)(\sigma+2)L},\\
&K_1=2(m-1)(m+2p-1)N+4(mp-2m+2p-1),\\
&A_3=\frac{B[(\sigma+2)B-m+p]}{\sigma+2},
\end{split}
\end{equation*}
where in order to simplify the notation, we set
$$
K=mN-N+2>0, \qquad L=\sigma(m-1)+2(p-1)>0.
$$
The previous equation describes a parabola $F(X,Z)=0$ inside the plane $(\Pi_1)$ with a positive peak (see Figure \ref{fig4}), where the direction of the flow towards the plane changes: the plane can be crossed by a trajectory arriving from bigger values of $X$ through the region interior to the parabola (that is, where $F(X,Z)<0$) and it cannot be crossed through the exterior of the parabola (that is, the region in $(\Pi_1)$ where $F(X,Z)>0$). This parabola has two intersections with the plane $Z=0$: one is at $X=X(P_2)$ (that is, it begins from the critical point $P_2$) and the second one can be also calculated explicitly in terms of $\sigma$
\begin{equation*}\label{interm17}
X_0(\sigma)=\frac{B(m-1)L(B\sigma+2B-m+p)}{[2Bm(\sigma+2)+L][(\sigma+2)KB+L]}.
\end{equation*}
Notice that, on the one hand, by geometrical considerations, $0<X_0(\sigma)<X(P_2)$ and on the other hand,
\begin{equation}\label{interm17}
X_0:=\lim\limits_{\sigma\to\infty}X_0(\sigma)=\frac{B^2(m-1)^2}{(2Bm+m-1)(BK+m-1)}>0.
\end{equation}
We thus make our choice for $B>0$ by letting it sufficiently large such that
\begin{equation}\label{interm18}
\frac{Y(P_2)+B}{X(P_2)}X_0-B<-2Y_0,
\end{equation}
where $Y_0=(m-1)/2$ is defined in Step 1 of the current proof. It follows that for $\sigma$ sufficiently large, we also get
$$
\frac{Y(P_2)+B}{X(P_2)}X_0(\sigma)-B<-Y_0.
$$

\medskip

\noindent \textbf{Step 3. Second plane through $P_2$.} Consider next a plane of the form
\begin{equation}\label{plane2}
(\Pi_2) \qquad Z=A(Y(P_2)-Y),
\end{equation}
with $A>0$ to be chosen later. We let, in order to simplify the notation,
\begin{equation}\label{intermhk}
h=Y-Y(P_2), \qquad k=X-X(P_2),
\end{equation}
and we find that the flow of the system \eqref{PSsyst1} over the plane $(\Pi_2)$ is given by the sign of the expression
$$
G(h,k)=\frac{2A(N(m-p)+\sigma+2)}{(\sigma+2)(mN-N+2)}(k+Mh)-A(N+\sigma-2)hk-A(m+p-1)h^2,
$$
where
\begin{equation*}
\begin{split}
&M=\frac{-(m-1)^2\sigma^2+[(m-1)(2A+1-m)N-4mp+4A+4p]\sigma+M_1}{4[N(m-p)+\sigma+2]},\\
&M_1=2(m-1)(2A+1-m)N-4p^2+8A-4m+4p+4.
\end{split}
\end{equation*}
Notice that, for $\sigma>0$ sufficiently large, we have $M<0$, which is very fine for our goals, since $h=Y-Y(P_2)\leq0$ in the region $Z\geq0$. Let us consider first $h=0$. Then $G(0,k)>0$ for any $k>0$, and it is easy to see that for $h<0$ and $k>0$ we also get $G(h,k)>0$ for $\sigma$ sufficiently large, since the negative contribution of $A(m+p-1)h^2$ is easily compensated by the positive contribution of the term containing $Mh$, which is also of order $\sigma$. We conclude that the plane $(\Pi_2)$ cannot be crossed from the right to the left in the region $\{X>X(P_2)\}$ by an orbit coming from the region $\{Z>A(Y(P_2)-Y)\}$ of the phase space. On the other hand, by collecting the terms of highest order in $\sigma$ in the expression of $G(h,k)$ (since we are interested in very large values of $\sigma$), the dominating terms with respect to $\sigma$ gather into
$$
G(h,k)\sim-Ah\sigma\left(k+\frac{(m-1)^2}{2(mN-N+2)}\right)=-Ah\sigma\left(X-X(P_2)+\frac{(m-1)^2}{2(mN-N+2)}\right).
$$
Noticing that
$$
\lim\limits_{\sigma\to\infty}\left[\frac{(m-1)^2}{2(mN-N+2)}-X(P_2)\right]=0,
$$
we find that $G(h,k)>0$ for $\sigma>0$ sufficiently large and $h<0$ if we keep $X$ bounded from below by a positive constant. We choose this constant to be $X_0$ introduced in \eqref{interm17}. On the contrary, it is obvious that $G(0,k)<0$ if $k<0$, that is $X<X(P_2)$. This is why we have to restrict ourselves to $h$ uniformly far from zero, thus, for any $\delta>0$, there exists a sufficiently large $\sigma=\sigma(\delta)$ (depending on $\delta$) such that the plane $(\Pi_2)$ cannot be crossed by any trajectory of the system through the region $\{X_0<X<X(P_2),Y<Y(P_2)-\delta\}$. Since these arguments are very geometric, a plot of the two planes and the parabola \eqref{interm16} where the direction of the flow changes is shown in Figure \ref{fig4}.
\begin{figure}[ht!]
  % Requires \usepackage{graphicx}
  \begin{center}
  \includegraphics[width=11cm,height=8cm]{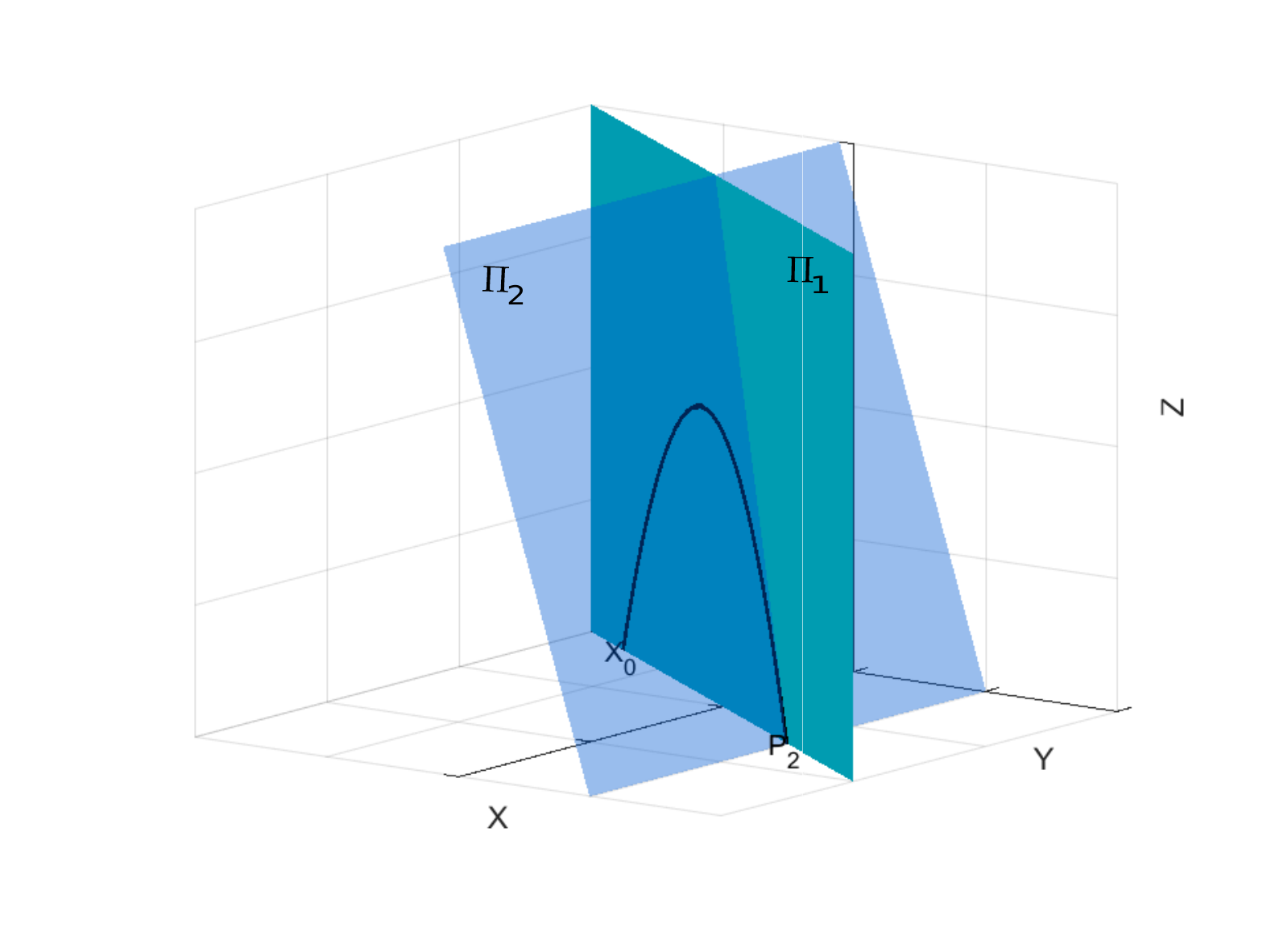}
  \end{center}
  \caption{The planes $(\Pi_1)$ and $(\Pi_2)$ in the phase space}\label{fig4}
\end{figure}

\medskip

\noindent \textbf{Step 4. Analysis in a neighborhood of $P_2$.} The previous analysis still has a flaw: we do not have uniformity in the magnitude of $\sigma$ since it depends on the choice of the small $\delta$ which measures how far we go to the left of the critical point $P_2$ (in terms of $Y$). Thus, if we fix a $\delta>0$ and then some $\sigma>0$ sufficiently large such that $G(h,k)>0$ for $X_0<X<X(P_2)$ and $Y<Y(P_2)-\delta$, we still have a small neighborhood of $P_2$ through which the orbits can cross our plane $(\Pi_2)$. In order to fix this problem, we have to use the plane $(\Pi_1)$ in our help. On the one hand, a simple inspection of the expression of $G(h,k)$ in the neighborhood of $P_2$ (that is, for $(h,k)$ defined by \eqref{intermhk} in a neighborhood of $(0,0)$) shows that the curve where the direction of the flow might change (and thus allowing some orbits to cross the plane $(\Pi_2)$) has the form
$$
k+Mh+{\rm lower \ order \ terms}=0,
$$
since the rest of the terms are quadratic in $(h,k)$. This is, in a sufficiently small neighborhood of $P_2$, a curve tangent to the line $k+Mh=0$, or equivalently
$$
Y=\frac{X(P_2)+MY(P_2)-X}{M}.
$$
On the other hand, the plane $(\Pi_1)$ intersects the plane $(\Pi_2)$ on a line of equation \eqref{plane1} on the plane $(\Pi_2)$. Taking the difference of the slopes of these two lines, we find that
$$
\lim\limits_{\sigma\to\infty}\left(\frac{Y(P_2)+B}{X(P_2)}-\frac{1}{M}\right)=\frac{2B(mN-N+2)+2(m-1)}{(m-1)^2}>0.
$$
It then follows that the line of intersection of the two planes comes first for $\sigma$ sufficiently large, and the difference in slopes is uniform. This proves that, letting $\sigma$ large enough, an orbit coming from the region $\{Z>A(Y(P_2)-Y)\}$ has to first intersect and cross the plane $(\Pi_1)$ in order to reach the small region that allows it to also cross $(\Pi_2)$. But we know that crossing the plane $(\Pi_1)$ is only allowed to be done in the region of the plane lying below the parabola \eqref{interm16}. Thus, the last "brick" of our system of planes is to finally choose the remaining free parameter $A$ sufficiently large such that the line of intersection between $(\Pi_1)$ and $(\Pi_2)$ is more "to the right" with respect to the parabola \eqref{interm16}. This is now easy from the geometrical point of view: the parabola \eqref{interm16} does not depend on $A$, while when $A$ tends to infinity, the equation $Z=A(Y(P_2)-Y)$ tends to a vertical line. Rigorously, by replacing
$$
X=\frac{(Y+B)X(P_2)}{B+Y(P_2)}
$$
into the parabola \eqref{interm16}, we get the expression of this parabola in terms of $Y$ and $Z$ and one can check (after rather tedious calculations, but which simplify when taking the limit as $\sigma\to\infty$) that for any choice of $A$ such that
$$
A>\frac{2(m-1)(mN-N+2)B^2+(m-1)(mN-N+2+2m)B+(m-1)^2}{B(mN-N+2)+(m-1)},
$$
and $\sigma$ sufficiently large, the line of intersection of the two planes lies "above" the parabola \eqref{interm16}. Thus, fixing $B$, then fixing $A$ according to the previous estimate and then taking $\sigma$ large enough in order that all the previous conditions to be satisfied, we infer that our system of planes can be only crossed by orbits in the region $\{0<X\leq X_0\}$, with $X_0$ defined in \eqref{interm17}.

\medskip

\noindent \textbf{Step 5. End of the proof.} Let us take now an orbit going out of $P_2$ for $\sigma>0$ sufficiently large such that the conditions in the previous steps are fulfilled. Then this orbit starts tangent to the eigenvector $e_3(\sigma)$ defined in \eqref{interm4}. We show that this orbit starts towards the region
\begin{equation}\label{region.large}
\left\{(X,Y,Z):Z>A(Y(P_2)-Y), \ Y<\frac{Y(P_2)+B}{X(P_2)}X-B\right\},
\end{equation}
(that is, "outside" the two planes) by taking scalar products of $e_3(\sigma)$ with the normal vectors to the planes. For the plane $(\Pi_2)$ we have
$$
\lim\limits_{\sigma\to\infty}e_3(\sigma)\cdot(0,A,1)=1,
$$
while for the plane $(\Pi_1)$, setting $C(\sigma)=(Y(P_2)+B)/X(P_2)$ in order to simplify the notation, we get
$$
e_3(\sigma)\cdot(C(\sigma),-1,0)=\frac{2(\sigma+2)(mN-N+2)[\sigma(m-1)-C(\sigma)(m-1)^2+2(m+p-2)]}{[\sigma(m-1)+2(p-1)]Q(\sigma)},
$$
which is positive provided $\sigma$ is sufficiently large, since
\begin{equation*}
\begin{split}
Q(\sigma)&=(m-1)^2\sigma^2+(m-1)(mN-N+2m+4p-2)\sigma+Q_0>0,\\
&Q_0=4(m-1)^2N+4(mp+p^2+m-2p-1),
\end{split}
\end{equation*}
and
$$
\lim\limits_{\sigma\to\infty}C(\sigma)=\frac{2[B(mN-N+2)+m-1]}{(m-1)^2}.
$$
It thus follows that the orbit coming out of $P_2$ enters the region \eqref{region.large}. On the other hand, we have shown in Lemma \ref{lem.Q1} that the orbits going out of $Q_1$ begin with $X=\infty$ and $Y=1/N$. Since $1/N>Y(P_2)$ according to \eqref{intermX}, it is obvious that these orbits also start in the region \eqref{region.large}. For $\sigma>0$ sufficiently large, all these orbits from both $P_2$ and $Q_1$ will remain in the region \eqref{region.large} at least until $X=X_0(\sigma)$, when they already entered the half-space $\{Y<-Y_0\}$ according to the choice of $B$. Thus these orbits will remain forever in the region $\{Y<-Y_0\}$ afterwards and cannot connect to either $P_1$ or the critical parabola \eqref{crit.par} in the case $m+p=2$. We then infer from Theorem \ref{th.exist} that the only good orbits for $\sigma$ large come from the point $P_0$, as stated.
\end{proof}

\noindent \textbf{Remark.} By the monotonicity of the coordinate $X$ in the half-space $\{Y<0\}$ and of the coordinate $Y$ in the region $\{Y<-Y_0\}$ (which comes from the choice of $Y_0$ in Step 1 of the above proof), all the orbits coming from the critical points $P_2$ and $Q_1$ for $\sigma\in(\sigma_1,\infty)$ sufficiently large cannot end up in a limit cycle, thus they all enter the critical point $Q_3$ at infinity. This is important for the final proof of this paper, the one of \textbf{Part 3} in Theorem \ref{th.small}.

\begin{figure}[ht!]
  % Requires \usepackage{graphicx}
  \begin{center}
  \subfigure[Critical $\sigma^*$]{\includegraphics[width=7.5cm,height=6cm]{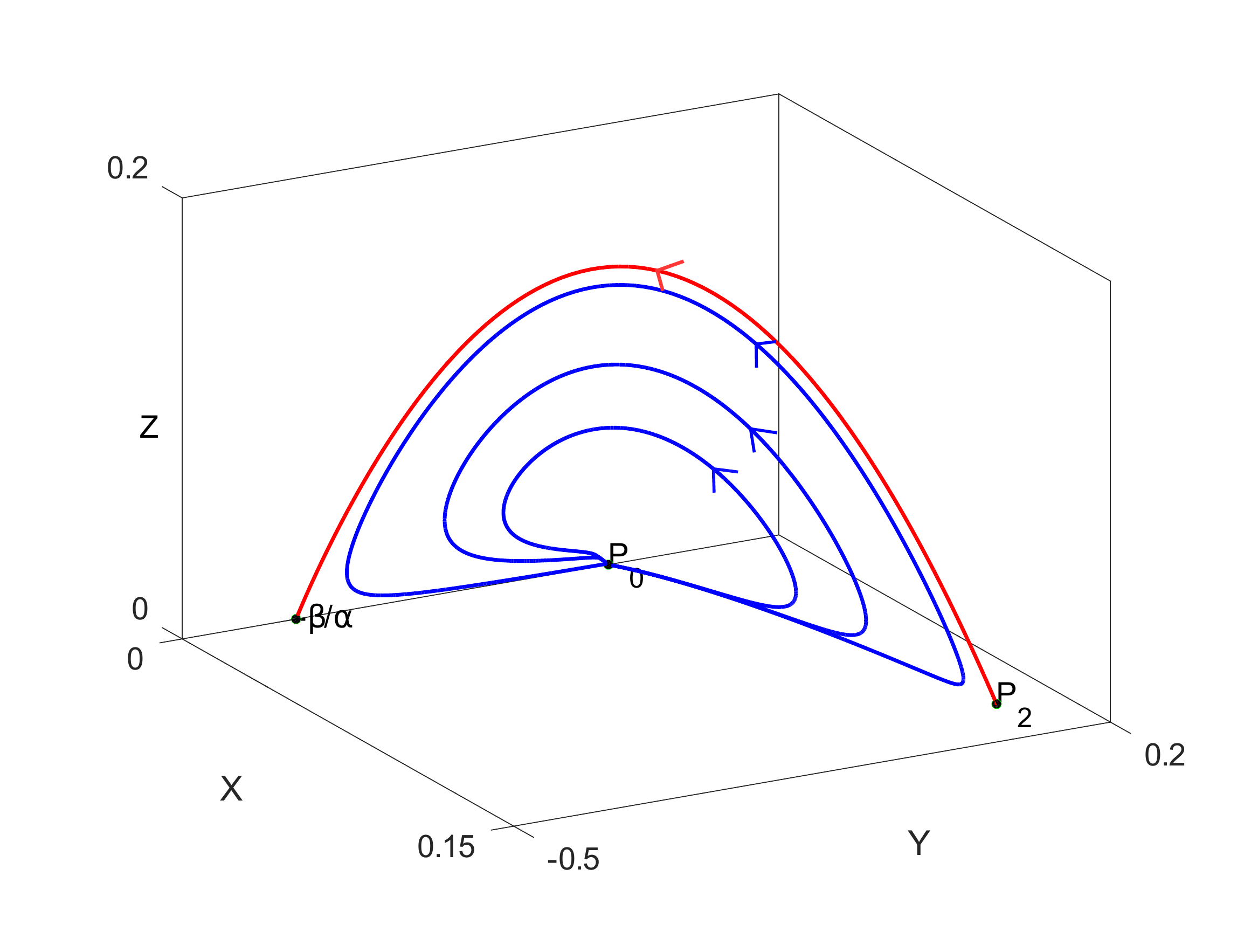}}
  \subfigure[$\sigma$ large]{\includegraphics[width=7.5cm,height=6cm]{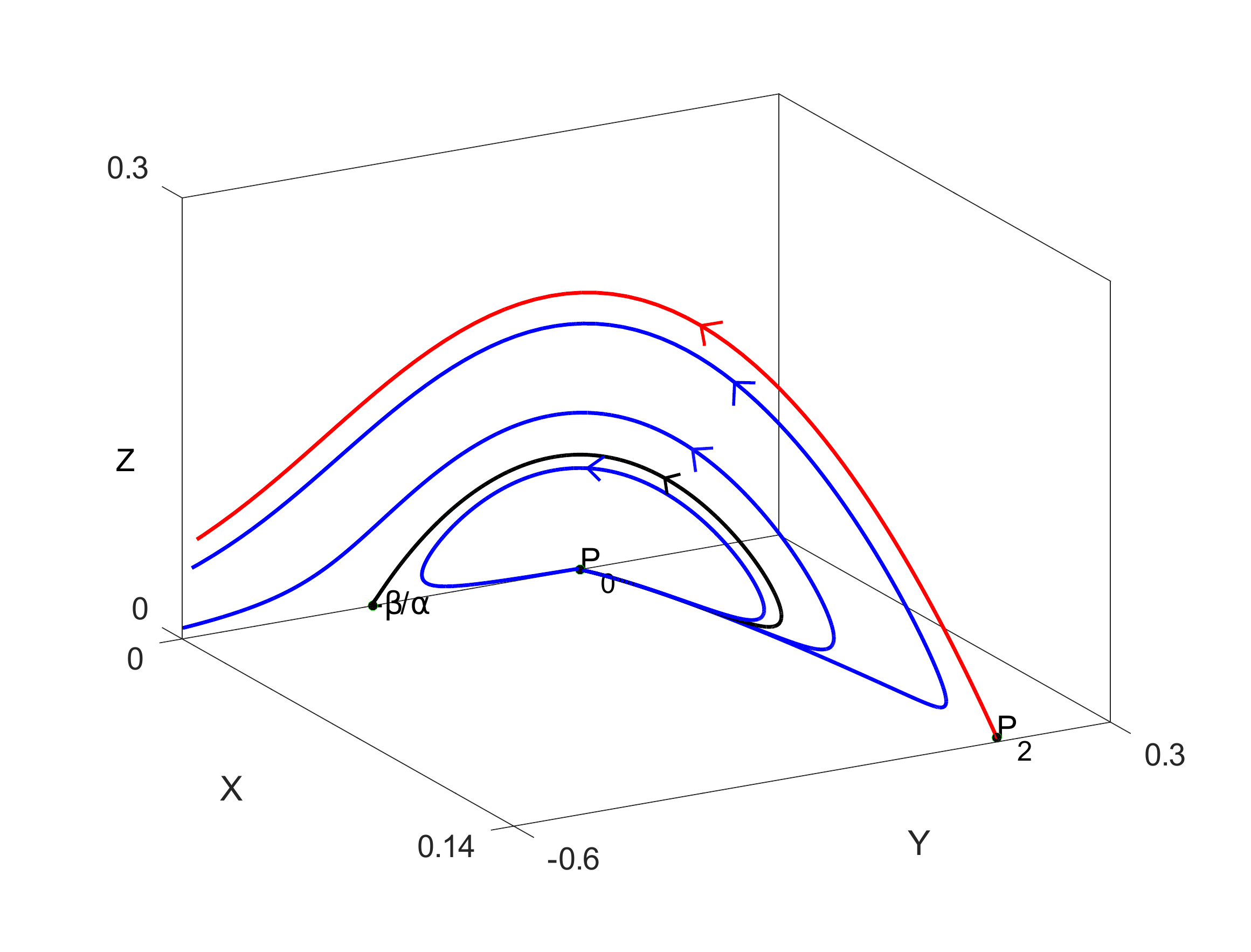}}
  \end{center}
  \caption{Orbits from $P_2$ and $P_0$ for different values of $\sigma$. Experiments for $m=3$, $p=0.5$, $N=4$ and $\sigma=4.822$, respectively $\sigma=6$}\label{fig3}
\end{figure}

\begin{proof}[Proof of Theorem \ref{th.small}, Part 3]
We use the standard "three-sets argument" by letting
\begin{equation*}
\begin{split}
&A:=\{\sigma>2(1-p)/(m-1): {\rm the \ orbit \ from \ }P_2 \ {\rm enters} \ P_0\},\\
&B:=\{\sigma>2(1-p)/(m-1): {\rm the \ orbit \ from \ }P_2 \ {\rm enters} \ P_1\},\\
&C:=\{\sigma>2(1-p)/(m-1): {\rm the \ orbit \ from \ }P_2 \ {\rm enters} \ Q_3\},
\end{split}
\end{equation*}
if $m+p>2$, with the obvious adaptation
\begin{equation*}
\begin{split}
&A:=\{\sigma>2: {\rm the \ orbit \ from \ }P_2 \ {\rm enters} \ P_0^{\lambda} \ {\rm with} \ \lambda\in(-\beta/2\alpha,0)\},\\
&B:=\{\sigma>2: {\rm the \ orbit \ from \ }P_2 \ {\rm enters} \ P_0^{-\beta/2\alpha}\},
\end{split}
\end{equation*}
in the case $m+p=2$. The sets $A$ and $C$ are both nonempty, as it follows from Proposition \ref{prop.P2mp2}, the previous Remark and the proof of Theorem \ref{th.small}, Part 2. Moreover, set $C$ is open by a standard continuity argument, since $Q_3$ is a stable node. In the case $m+p=2$, we infer from Lemma \ref{prop.att} that the whole set of points $P_0^{\lambda}$ with $\lambda\in(-\beta/2\alpha,0)$ is asymptotically stable, thus $A$ is also an open set. For the case $m+p>2$ things are a bit more involved, since the critical point $P_0$ is not an attractor, but nevertheless one can show that it behaves as an attractor on half-balls inside the half-space $\{Y<0\}$, the details being given at the end of the paper \cite{IS20b}. Thus, set $A$ is also open if $m+p>2$. We conclude that the set $B$ is closed and non-empty, thus it contains at least a value of $\sigma$, that we call $\sigma^*$. This, together with the definition of the set $B$, ends the proof.
\end{proof}
We plot in Figure \ref{fig3} the phase space for $\sigma=\sigma^*$ (when the orbit going out of $P_2$ enters $P_1$) and for $\sigma$ large, illustrating the outcome of Theorem \ref{th.large}.

\bigskip

\noindent \textbf{Acknowledgements} R. I. and A. S. are partially supported by the Spanish project PID2020-115273GB-I00. A. I. M. is partially supported by the Spanish project RTI2018-098743-B-100.

\bibliographystyle{plain}

\end{document}